\documentclass[12pt,oneside]{amsart}
\usepackage[top=1in, left=1in, right=1in, bottom=1in]{geometry}
\setlength{\baselineskip}{1cm}
\allowdisplaybreaks[1]
\usepackage{latexsym}
\usepackage{amsmath}
\usepackage{amssymb}
\usepackage{amscd}
\usepackage{array}
\usepackage{hhline}
\usepackage{color}
\usepackage{hyperref}
\hypersetup{
    colorlinks,
    citecolor=black,
    filecolor=black,
    linkcolor=black,
    urlcolor=black
}
\usepackage[vcentermath]{youngtab}
\usepackage{bbold}
\usepackage[all]{xy}
\usepackage{graphicx}
%

%
\newtheorem{definition}{Definition}[section]
\newtheorem{theorem}[definition]{Theorem}
\newtheorem{proposition}[definition]{Proposition}
\newtheorem{remark}[definition]{Remark}
\newtheorem{lemma}[definition]{Lemma}
\newtheorem{corollary}[definition]{Corollary}

\newtheorem{example}[definition]{Example}

\def\C{{\mathbb{C}}}

\def\N{{\mathbb{N}}}
\def\Z{{\mathbb{Z}}}

\def\deg{{\mathrm{deg}}}
\def\dim{{\mathrm{dim}}}

\def\ui{{\text{$\underline{i}$}}}

\def\oE{{\widehat{E}}}

\def\l{{\lambda}}

\def\idm{\text{\bfseries 1}}

\def\u{{\mathcal{U}}}
\def\e{{\mathcal{E}}}


\newcommand{\btime}[1]{\boxtimes\hspace{-1em}\raisebox{-0.8ex}{${}_{#1}$}\,\,}

\newcommand{\U}[1]{\dot{\mathbf U}_q(\mathfrak{sl}_{#1})}

\newcommand{\Uv}[1]{\dot{\mathbf U}_v(\mathfrak{sl}_{#1})}

\title{The $\mathfrak{sl}_N$-web algebras and dual canonical bases}
\author{Marco Mackaay}
\thanks{M.M. was supported by the FCT - Fundac\~{a}o para a 
Ci\^{e}ncia e a Tecnologia, through project number PTDC/MAT/101503/2008, 
New Geometry and Topology.}
\date{\today}
\begin{document}

\begin{abstract}
In this paper, which is a follow-up to~\cite{my}, 
I define and study $\mathfrak{sl}_N$-web algebras, for any $N\geq 2$. 
For $N=2$ these algebras are isomorphic to 
Khovanov's~\cite{kh} arc algebras and for $N=3$ 
they are Morita equivalent to the 
$\mathfrak{sl}_3$-web algebras which I defined and studied together with Pan 
and Tubbenhauer~\cite{mpt}. 

The main result of this paper is that 
the $\mathfrak{sl}_N$-web algebras are Morita equivalent to 
blocks of certain level-$N$ cyclotomic KLR algebras, for which I use 
the categorified quantum skew Howe duality defined in~\cite{my}. 

Using this Morita equivalence and Brundan and Kleshchev's~\cite{bk} work 
on cyclotomic KLR-algebras, I show that there exists an isomorphism between 
a certain space of $\mathfrak{sl}_N$-webs and the 
split Grothendieck group of the corresponding 
$\mathfrak{sl}_N$-web algebra, which maps 
the dual canonical basis elements to the Grothendieck classes of 
the indecomposable projective modules (with a certain normalization of their 
grading).      
\end{abstract}

\maketitle

\paragraph*{Acknowledgements}
I thank Bruce Fontaine, Joel Kamnitzer, Mikhail Khovanov, Bernard Leclerc, 
Daniel Tubbenhauer, Ben Webster and Yasuyoshi Yonezawa 
for helpful exchanges of emails and discussions on related 
(and unrelated) topics. 

I also thank Yonezawa for making the pictures in this paper, some of which 
already appeared in~\cite{my}.   
\tableofcontents
%
%
%
%
\section{Introduction}\label{intro}

In~\cite{ckm} Cautis, Kamnitzer and Morrison defined $\mathfrak{sl}_N$-webs 
and the relations they satisfy, for arbitrary $N\in\N_{\geq 2}$. 
In~\cite{my} Yonezawa and I defined certain $\mathfrak{sl}_N$-web spaces 
$W_{\Lambda}$ for arbitrary $N\in\N_{\geq 2}$ and  
$\Lambda:=N\omega_{\ell}$, where $\omega_{\ell}$ is the $\ell$-th 
fundamental $\mathfrak{sl}_m$-weight with $m=N\ell$ for arbitrary 
$\ell\in\N$. By quantum skew Howe duality, also due to Cautis, Kamnitzer and 
Morrison~\cite{ckm}, we obtained a $\U{m}$-action on $W_{\Lambda}$ and 
showed that there exists an isomorphism of $\U{m}$-modules 
\begin{equation}
\label{eq:Howeintro}
V_{\Lambda}\to W_{\Lambda}.
\end{equation}
Here $V_{\Lambda}$ is the irreducible $\U{m}$-module of highest weight 
$\Lambda$, obtained as a quotient of the Verma module with the same highest 
weight. 

In the same paper, we also defined $\C$-linear additive 
$\mathfrak{sl}_N$-web categories 
$\mathcal{W}^{\circ}_{\Lambda}$, using colored $\mathfrak{sl}_N$-matrix 
factorizations. We showed that $\mathcal{W}_{\Lambda}^{\circ}$ is a 
strong $\mathfrak{sl}_m$ $2$-representation and 
that there exists an equivalence of 
strong $\mathfrak{sl}_m$ $2$-representations
\begin{equation}
\label{eq:introiso}
\mathcal{V}_{\Lambda}^p\to \dot{\mathcal{W}}_{\Lambda}^{\circ}.
\end{equation}
Here $ \dot{\mathcal{W}}_{\Lambda}^{\circ}$ 
denotes the Karoubi envelope of $\mathcal{W}_{\Lambda}^{\circ}$ and 
$\mathcal{V}_{\Lambda}^p:=R_{\Lambda}-\mathrm{pmod}_{\mathrm{gr}}$ 
is the category of finite-dimensional 
graded projective modules of the cyclotomic Khovanov-Lauda-Rouquier (KLR) 
algebra $R_{\Lambda}$. As we argued in~\cite{my}, this 
result can be seen as a categorification 
of an instance of the quantum skew Howe duality defined in~\cite{ckm}.  

Brundan and Kleshchev~\cite{bk} showed that there exists a bijective 
$\U{m}$-intertwiner 
\begin{equation}
\label{eq:introiso2}
\delta\colon V_{\Lambda}\to K_0^q(\mathcal{V}_{\Lambda}^p)
\end{equation}
where $K_0^q$ denotes the split Grothendieck group tensored with $\C(q)$ over 
$\Z[q,q^{-1}]$. 

Using \eqref{eq:introiso} and~\eqref{eq:introiso2}, Yonezawa and I  
defined a bijective $\U{m}$-intertwiner 
\begin{equation}
\label{eq:introiso3}
\delta^{\circ}\colon W_{\Lambda}\to K_0^q(\dot{\mathcal{W}}_{\Lambda}^{\circ}),
\end{equation}
such that the following square commutes:
\begin{equation}
\label{eq:Moritasquare1intro}
\begin{CD}
V_ {\Lambda}&@>{\delta}>>&K_0^q(\mathcal{V}_{\Lambda}^p)\\
@V{}VV&&@VV{}V\\
W_{\Lambda}&@>{\delta^{\circ}}>>& K_0^q(\dot{\mathcal{W}}_{\Lambda}^{\circ}) 
\end{CD}.
\end{equation}
\vskip0.5cm
The category $\mathcal{W}_{\Lambda}^{\circ}$ has infinitely many objects, but 
the space $W_{\Lambda}$ is finite-dimensional. Therefore, in this paper 
I choose a finite basis of $W_{\Lambda}$ and look at the 
full subcategory of $\mathcal{W}_{\Lambda}^{\circ}$ generated by the objects 
corresponding to these basis webs, denoted $\mathcal{W}_{\Lambda}^p$. I use 
some ``general arguments'' to show that   
\begin{equation}
\label{eq:introiso4}
\mathcal{W}_{\Lambda}^p\cong \dot{\mathcal{W}}_{\Lambda}^{\circ}
\end{equation}
and that there exists a $\Z$-graded finite-dimensional 
algebra $H_{\Lambda}$ such that 
\begin{equation}
\label{eq:introiso5}
\mathcal{W}_{\Lambda}^p\cong H_{\Lambda}-\mathrm{pmod}_{\mathrm{gr}}.
\end{equation}  
Although conceptually the path I sketched above is probably clearer, 
in the paper I will follow the inverse path. I will first 
define $H_{\Lambda}$ (Definitions~\ref{def:webalgebra} 
and~\ref{def:catwebspace}), then 
use~\eqref{eq:introiso5} as the definition of $\mathcal{W}_{\Lambda}^p$ 
(Definitions~\ref{def:webalgebramod} and~\ref{def:catwebspace}) and 
finally prove~\eqref{eq:introiso4} (Lemma~\ref{lem:embeddingwebalgintowebcat}). 

By ``general arguments'' I mean that they would prove the analogous 
results for other bases of $W_{\Lambda}$. In this paper I have chosen 
the basis of $W_{\Lambda}$ which corresponds to the 
Leclerc-Toffin~\cite{lt} basis in 
$V_{\Lambda}$ by quantum skew Howe duality. However, 
I could have used {\em Fontaine's basis}~\cite{fon} of $W_{\Lambda}$, for 
example. This would give rise to 
a Morita equivalent web algebra and all results in this paper for $H_{\Lambda}$ 
would have analogues for this web algebra.   

Leclerc and Toffin~\cite{lt} showed that their basis 
can be used to compute the canonical basis of $V_{\Lambda}$. Therefore, 
quantum skew Howe duality implies that 
the corresponding basis can be used 
to compute the dual canonical basis in $W_{\Lambda}$, as I will show in 
Section~\ref{sec:webs}.\footnote{This idea first arose in a discussion with 
Daniel Tubbenhauer on web bases.} In particular, Leclerc and Toffin's results 
indicate how one should normalize 
the generating $\mathfrak{sl}_N$-intertwiners corresponding to the 
generating $\mathfrak{sl}_N$-webs, such that the isomorphism 
in~\eqref{eq:Howeintro} maps the canonical basis in $V_{\Lambda}$ 
precisely to the dual canonical basis of $W_{\Lambda}$. As I will explain in 
Remark~\ref{rem:unfortunately}, in order to do this 
properly one has to switch to a new quantum parameter 
$v=-q^{-1}$, which first appeared in~\cite{fkk} and~\cite{kk}. 
Note that this normalization of the intertwiners differs from 
the normalization in the Cautis-Kamnitzer-Morrison paper~\cite{ckm}.  

Just as $R_{\Lambda}$, the algebra $H_{\Lambda}$ is a direct sum of blocks 
$$H_{\Lambda}=\bigoplus_{\vec{k}\in\Lambda(m,m)_N}H(\vec{k},N),$$
where 
\begin{equation}
\label{eq:levelmweights}
\Lambda(m,m)_N:=\{\vec{k}=(k_1,\ldots,k_m)\in \{0,\ldots,N\}^m 
\mid k_1+\cdots+k_m=m\}
\end{equation}
is the set of $\mathfrak{gl}_m$-weights of $V_{\Lambda}$. 
I show that $H(\vec{k},N)$ is a 
finite-dimensional graded symmetric Frobenius algebra, 
for each $\vec{k}\in \Lambda(m,m)_N$. In fact, $H(\vec{k},N)$ is only 
isomorphic to its dual $H(\vec{k},N)^{\vee}$ after a certain degree shift 
depending on $\vec{k}$. Therefore $H_{\Lambda}$ is not a graded Frobenius 
algebra, strictly speaking. 
 
By~\eqref{eq:introiso} and~\eqref{eq:introiso4}, we see that 
$$\mathcal{V}_{\Lambda}^p\cong \mathcal{W}_{\Lambda}^p$$
holds. The main result in this paper (Theorem~\ref{thm:categorification2}) 
is the extension of this equivalence 
to the categories of {\em all} finite-dimensional graded modules of 
$R_{\Lambda}$ and $H_{\Lambda}$:
\begin{equation}
\label{eq:Moritaintro}
\mathcal{V}_{\Lambda}\cong \mathcal{W}_{\Lambda}.
\end{equation}
In other words, $R_{\Lambda}$ and $H_{\Lambda}$ are Morita equivalent as graded 
algebras.  

The equivalence in~\eqref{eq:Moritaintro} allows me 
to define an anti-involution 
$$*\colon H_{\Lambda}\to H_{\Lambda}$$
which corresponds to Khovanov and Lauda's~\cite{kl1} anti-involution on 
$R_{\Lambda}$, and use it to define a duality 
$$\circledast\colon \mathcal{W}_{\Lambda}\to \mathcal{W}_{\Lambda}$$
which corresponds to Brundan and Kleshchev's~\cite{bk} 
duality on $\mathcal{V}_{\Lambda}$. Just as in their case, the duality 
on $\mathcal{W}_{\Lambda}$ can be restricted to $\mathcal{W}_{\Lambda}^p$, 
which induces a bar-involution on its Grothendieck group.  

Brundan and Kleshchev~\cite{bk} showed that $\delta$ in~\eqref{eq:introiso2} 
intertwines the bar-involutions and that it sends the canonical basis 
elements in $V_{\Lambda}$ to the Grothendieck classes of 
the indecomposable projective modules in $\mathcal{V}_{\Lambda}^p$ with 
a suitable normalization of their grading. 
I will show that the arguments above imply that there 
exists an isomorphism of $\Uv{m}$-modules 
$$\delta'\colon W_{\Lambda}\to K_0^v(\mathcal{W}^p_{\Lambda})$$
such that the square  
\begin{equation}
\label{eq:Moritasquare2intro}
\begin{CD}
V_ {\Lambda}&@>{\delta}>>&K_0^v(\mathcal{V}_{\Lambda}^p)\\
@A{}AA&&@AA{}A\\
W_{\Lambda}&@>{\delta'}>>& K_0^v(\mathcal{W}_{\Lambda}^{p}) 
\end{CD}
\end{equation}
commutes. Note that the vertical maps are inverted, compared 
to~\eqref{eq:Moritasquare1intro}. This is just for convenience: the proof 
of Theorem~\ref{thm:categorification2} becomes slightly shorter this way. 

By the above, it follows (Corollary~\ref{cor:dualcanonical}) that 
$\delta'$ maps the dual 
canonical basis elements in $W_{\Lambda}$ to the Grothendieck 
classes of the indecomposable projective modules in $\mathcal{W}_{\Lambda}^p$ 
which correspond to Brundan and Kleshchev's indecomposables in 
$\mathcal{V}_{\Lambda}^p$. 

Finally, the equivalence in~\eqref{eq:Moritaintro} implies 
(Corollary~\ref{cor:center}) that the 
center of $H(\vec{k},N)$ is isomorphic to the cohomology of a certain 
$N$-block Spaltenstein variety, for each $\vec{k}\in\Lambda(m,m)_N$. This 
follows from the analogous result for $R_{\Lambda}$, 
due to Brundan, Kleshchev and Ostrik~\cite{bru2,bk2,bo}. 

\vskip0.5cm
For $N=2$ the web algebras were introduced by Khovanov~\cite{kh}, 
who called them {\em arc algebras}. For $N=3$ Pan and Tubbenhauer and I 
defined web algebras in~\cite{mpt} using Kuperberg's web basis. This basis 
is not equal to Leclerc and Toffin's, so the $\mathfrak{sl}_3$-web algebras 
in that paper are not isomorphic to the ones in this paper, but they are 
Morita equivalent. 

For $N=2$ Huerfano and Khovanov~\cite{hkh} used the arc algebras to category 
$V_{\Lambda}$, using a (partial) categorification of quantum skew Howe duality. 
The representation theory of the arc algebras and its connection 
with geometry have been studied in depth by a variety of 
people~\cite{bs,bs2,bs3,bs4,bs5,ck,kh,kh2,str,sw}. For $N=3$ the analogues 
of some of the results for arc algebras have been proved 
in~\cite{mpt} but much less is known. The results in this 
paper generalize some of the known results for $N=2,3$. 
\vskip0.5cm

For $N=2$ Stroppel and Webster~\cite{sw} proved that the 
arc algebras can be obtained from the intersection cohomology of the 
corresponding $2$-block Springer varieties. One can 
ask (as Kamnitzer asked M.~M. for $N=3$) if 
$H(\vec{k},N)$ can be obtained from the intersection cohomology of 
the $\mathfrak{sl}_N$ web varieties in~\cite{fkk}.  

Another open question is how to generalize the web algebras to 
{\em clasped webs}~\cite{caut,ku}. For such ``clasped web algebras'' one can 
also ask about the relation with the intersection cohomology of 
the web varieties in~\cite{fkk}, since the framework in that paper 
is quite general. 



 
\section{Notation and conventions}\label{sec:conventions}
In this section I fix some notations and explain some conventions. 
\vskip0.5cm
Let $\mathcal{C}^*$ be a $\Z$-graded $\C$-linear additive or 
Abelian category which admits translation (for a precise definition and 
some introductory remarks on this sort of categories, 
see~\cite{l} and the references therein for 
example). Then $\{t\}$ denotes a positive translation/shift of $t$ units. 
For any 
Laurent polynomial $f(q)=\sum a_iq^i\in \N[q,q^{-1}]$, 
define 
\begin{equation*}
X^{\oplus f(q)}:=\bigoplus_{i}\left(X\{i\}\right)^{\oplus a_i}.
\end{equation*}
Let $\mathcal{C}$ be the subcategory of $\mathcal{C}^*$ with the same objects 
but only degree-zero morphisms. In this paper $\mathcal{C}$ will always 
have finite-dimensional hom-spaces. 

Typical examples of such categories in this paper will be of the 
following sort. Let $\mathcal{C}$ be given by 
$$
A-\mathrm{mod}_{\mathrm{gr}}\quad\text{or}\quad A-\mathrm{pmod}_{\mathrm{gr}},
$$
which are the categories of finite-dimensional graded modules 
and finite-dimensional graded projective modules of a 
finite-dimensional $\Z$-graded complex algebra $A$. Both these categories 
admit translation, where the grading shifts are defined by 
$$X\{t\}_i:=X_{i-t}$$
for any object $X\in\mathcal{C}$ and any $i,t\in\Z$.

For any pair of objects $X,Y\in \mathcal{C}$, let $\mathrm{Hom}(X,Y)$ 
be the hom-space in $\mathcal{C}$. Then the graded hom-space of 
$\mathcal{C}^*$ is given by  
$$\mathrm{HOM}(X,Y):=\bigoplus_{t\in\Z}\mathrm{Hom}(X\{t\},Y).$$
 
For simplicity, assume 
that $\mathcal{C}^*$ has finite-dimensional hom-spaces too. 
Define the {\em quantum dimension} of $\mathrm{HOM}(X,Y)$ by 
$$\dim_q\,\mathrm{HOM}(X,Y):=\sum_{t\in\Z} q^t\dim\, \mathrm{Hom}(X\{t\},Y)\in 
\N[q,q^{-1}].$$
\vskip0.5cm 
Assume additionally that $\mathcal{C}$ is Krull-Schmidt. 
The split Grothendieck group 
$K_0(\mathcal{C})$ 
is by definition the Abelian group generated 
by the isomorphism classes of the objects in $\mathcal{C}$ modulo the relation  
$$[X\oplus Y]=[X]+[Y],$$
for any objects $X,Y\in\mathcal{C}$. It becomes a $\Z[q,q^{-1}]$-module, 
by defining 
$$q[X]=[X\{1\}],$$
for any object $X\in\mathcal{C}$. For any 
Laurent polynomial $f(q)=\sum a_iq^i\in \N[q,q^{-1}]$, we get 
$$f(q)[X]=[X^{\oplus f(q)}].$$

Assume that 
$S=\{X_1,\ldots,X_s\}$ is a set of indecomposable objects in 
$\mathcal{C}$ such that 
\begin{itemize}
\item any indecomposable object in $\mathcal{C}$ is 
isomorphic to $X_i\{t\}$ for a certain $i\in\{1,\ldots,s\}$ and $t\in\Z$;
\item for all $i\ne j\in \{1,\ldots,s\}$ and all $t\in\Z$ we have  
$$X_i\not\cong X_j\{t\}.$$
\end{itemize}
Then it is well-known that $K_0(\mathcal{C})$ is freely generated by $S$.  

In this paper I will mostly tensor $K_0(\mathcal{C})$ with $\C(q)$, 
so let 
$$K_0^q(\mathcal{C}):=K_0(\mathcal{C})\otimes_{\Z[q,q^{-1}]}\C(q).$$
\vskip0.5cm
A $q$-sesquilinear form on a finite-dimensional complex vector space $V$ 
is by definition a form 
$$\langle \cdot,\cdot\rangle\colon V\times V\to \C(q)$$
satisfying 
\begin{eqnarray*}
\langle f(q)v,v'\rangle &=& f(q^{-1})\langle v,v'\rangle\\
\langle v,f(q)v'\rangle &=& f(q)\langle v,v'\rangle,
\end{eqnarray*}
for any $f(q)\in \C(q)$ and $v,v'\in V$. 

There exists a well-known 
$q$-sesquilinear form on $K_0^q(\mathcal{C})$,  
which is called the {\em Euler form}. It is defined by 
$$\langle [X],[Y]\rangle := \dim_q\, \mathrm{HOM}(X,Y),$$
for any objects $X,Y\in \mathcal{C}$. Note that the Euler form takes 
values in $\N[q,q^{-1}]$.


\section{The special linear quantum algebra and its fundamental representations}
\label{sec:fund}
I briefly recall the special linear quantum algebra and the pivotal 
category of its fundamental representations. In the beginning I will 
use the ``neutral'' parameter $n\in\N_{\geq 2}$ for the quantum algebras 
and their representations. Later on I will always carefully choose between 
$n=N$ or $n=m$ in the different parts, which will be convenient for 
distinguishing the two sides of quantum skew Howe duality.  
\subsection{The special linear quantum algebra}
Let $n\geq 2$ be an arbitrary integer and let  
$$\alpha_i:=(0,\ldots,1,-1,\ldots,0)\in\Z^{n}$$ with $1$ on the $i$-th 
position, for $i=1,\ldots,n-1$. Denote the Euclidean inner product 
on $\Z^{n}$ by $(\cdot,\cdot)$. 
   
\begin{definition} For $n\in\N_{\geq 2}$ the {\em quantum special linear algebra} 
${\mathbf U}_q(\mathfrak{sl}_n)$ is 
the associative unital $\C(q)$-algebra generated by $K_i^{\pm 1}, E_{\pm i}$, 
for $i=1,\ldots, n-1$, subject to 
the relations
\begin{gather*}
K_iK_j=K_jK_i,\quad K_iK_i^{-1}=K_i^{-1}K_i=1,
\\
E_{+i}E_{-j} - E_{-j}E_{+i} = \delta_{i,j}\dfrac{K_i-K_i^{-1}}{q-q^{-1}},
\\
K_iE_{\pm j}=q^{\pm (\alpha_i,\alpha_j)}E_{\pm j}K_i,
\\
E_{\pm i}^2E_{\pm j}-(q+q^{-1})E_{\pm i}E_{\pm j}E_{\pm i}+E_{\pm j}E_{\pm i}^2=0,
\qquad\text{if}\quad |i-j|=1,\\
E_{\pm i}E_{\pm j}-E_{\pm j}E_{\pm i}=0,\qquad\text{else}
\end{gather*} 
\end{definition}

Recall that ${\mathbf U}_q(\mathfrak{sl}_n)$ is a Hopf algebra with 
coproduct given by 
\begin{equation*}
\label{eq:coproduct}
\Delta(E_{+i})=E_{+i}\otimes K_i+1\otimes E_{+i},\quad 
\Delta(E_{-i})=E_{-i}\otimes 1+K_i^{-1}\otimes E_{-i},\quad
\Delta(K_i^{\pm 1})=K_i^{\pm 1}\otimes K_i^{\pm 1}
\end{equation*}
and antipode by 
\begin{equation*}
\label{eq:antipode}
S(E_{+i})=-E_{+i}K_i^{-1},\quad 
S(E_{-i})=-K_iE_{-i},\quad
S(K_i)=K_i^{-1}.  
\end{equation*}
The counit is given by 
\begin{equation*}
\label{eq:counit}
\epsilon(E_{\pm i})=0,\quad \epsilon(K_i)=1.
\end{equation*}
The Hopf algebra structure is used to define 
${\mathbf U}_q(\mathfrak{sl}_n)$ actions on tensor products and duals of 
${\mathbf U}_q(\mathfrak{sl}_n)$-modules. 
\vskip0.5cm
Recall that the ${\mathbf U}_q(\mathfrak{sl}_n)$-weight lattice is 
isomorphic to $\Z^{n-1}$. For any $i=1,\ldots, n-1$, the element $K_i$ acts as 
$q^{\lambda_i}$ on the $\lambda$-weight space of any weight representation. 

Although I 
have not recalled the definition of 
${\mathbf U}_q(\mathfrak{gl}_n)$, it is sometimes convenient to use 
${\mathbf U}_q(\mathfrak{gl}_n)$-weights. Recall that 
the ${\mathbf U}_q(\mathfrak{gl}_n)$-weight lattice is isomorphic to $\Z^n$ 
and that any ${\mathbf U}_q(\mathfrak{gl}_n)$-weight 
$\vec{k}=(k_1,\ldots,k_n)\in\Z^n$ determines a unique 
${\mathbf U}_q(\mathfrak{sl}_n)$-weight 
$$\lambda=(k_1-k_2,\ldots,k_{n-1}-k_n)\in
\Z^{n-1}.$$ 
In this way, we get an isomorphism 
\begin{equation}
\label{eq:isolattices}
\Z^n/\langle(1^n)\rangle\cong \Z^{n-1}.
\end{equation}

\begin{remark}
Since the ${\bf U}_q(\mathfrak{sl}_n)$ and ${\bf U}_q(\mathfrak{gl}_n)$-weights 
and weight lattices are equal to those of the 
corresponding classical algebras, I will often refer to them as the 
$\mathfrak{sl}_n$ and $\mathfrak{gl}_n$-weights and weight lattices.  
\end{remark}

For weight representations, one can also use the idempotented 
version of ${\mathbf U}_q(\mathfrak{sl}_n)$, denoted $\U{n}$, due to 
Beilinson, Lusztig and MacPherson~\cite{B-L-M}. 
For $n=2$, define $i'=(2)$. For $n>2$, define    
$$i':=
\begin{cases}
(2,-1,0\ldots,0),&\text{for}\;i=1;\\
(0,\ldots,-1,2,-1,\ldots,0),&\text{for}\; 2\leq i\leq n-2;\\
(0,\ldots,0,-1,2),&\text{for}\;i=n-1.
\end{cases}
$$ 

Adjoin an idempotent $1_{\lambda}$ for each $\lambda\in\Z^{n-1}$ and add 
the relations
\begin{align*}
1_{\lambda}1_{\mu} &= \delta_{\lambda,\mu}1_{\lambda},   
\\
E_{\pm i}1_{\lambda} &= 1_{\lambda \pm i'}E_{i},
\\
K_i1_{\lambda} &= q^{\lambda_i}1_{\lambda}.
\end{align*}
\begin{definition} The {\em idempotented quantum special linear algebra} 
is defined by 
\[
\U{n}=\bigoplus_{\lambda,\mu\in\Z^{n-1}}1_{\lambda}{\mathbf U}_q(\mathfrak{sl}_n)1_{\mu}.
\]
\end{definition} 

\begin{remark}
It is sometimes convenient to consider $\U{n}$ as a category, whose objects 
are the weights $\lambda\in\Z^{n-1}$. The hom-space between 
$\lambda,\mu\in\Z^{n-1}$ is equal to 
$$1_{\lambda}{\mathbf U}_q(\mathfrak{sl}_n)1_{\mu}$$
and composition is given by multiplication. 
\end{remark}

\subsection{Fundamental representations}\label{sec:fundreps}
In this section I recall the fundamental 
${\mathbf U}_q(\mathfrak{sl}_n)$-representation theory, 
following~\cite{ckm,mor}. 
The basic ${\mathbf U}_q(\mathfrak{sl}_n)$-representation is denoted $\C^n_q$. 
It has a standard basis $\{x_1,\ldots,x_n\}$ on which the action is given by 
$$
E_{+i}(x_j)=
\begin{cases}
x_i,&\text{if}\;j=i+1;\\
0,&\text{else}.
\end{cases}
\quad
E_{-i}(x_j)=
\begin{cases}
x_{i+1},&\text{if}\;j=i;\\
0,&\text{else}.
\end{cases}
$$
$$
K_i(x_j)=
\begin{cases}
qx_i,&\text{if}\;j=i;\\
q^{-1}x_{i+1},&\text{if}\; j=i+1;\\
x_j,&\text{else}.
\end{cases}
$$

Using the basic representation, one can define all fundamental 
${\mathbf U}_q(\mathfrak{sl}_n)$-representations. Define the 
{\em quantum exterior algebra}
$$\Lambda^{\bullet}_q(\C^n_q):=T\C^n_q/\langle \{x_i\otimes x_i,\;x_i\otimes x_j+q
x_j\otimes x_i\mid 1\leq i<j\leq n\}\rangle.$$ 
We denote the equivalence class of $x\otimes y$ by $x \wedge_q y$. 
Note that 
$$\Lambda^{\bullet}_q(\C^n_q)=\bigoplus_{k=0}^n \Lambda^k(\C_q^n).$$
For each $0\leq k\leq n$, the homogeneous direct 
summand $\Lambda_q^k(\C_q^n)$ is an irreducible 
${\mathbf U}_q(\mathfrak{sl}_n)$-representation. For $k=0,n$ it is the 
trivial representation and for $1\leq k\leq n$ it is called the 
{\em $k$-th fundamental $\U{n}$-representation}. Recall 
that the dual of the $k$-th fundamental representation is 
isomorphic to the $(n-k)$-th fundamental representation.  
\vskip0.5cm
For each $k$ element subset $S\subset \{1,\ldots n\}$, order the elements in 
decreasing order and define 
$$x_S:=x_{s_1}\wedge_q x_{s_2}\wedge_q\cdots\wedge_q x_{s_k}.$$
We call the $x_S$ {\em elementary tensors}. The {\em standard basis} of 
$\Lambda^k(\C_q^n)$ is by definition 
$$\{x_S\mid S\subset \{1,\ldots n\},\; \vert S\vert=k\}.$$ 
Note that $x_S$ has $\mathfrak{gl}_n$-weight $\nu_S=(\nu_1,\ldots,\nu_n)$ with 
$$
\nu_j=
\begin{cases}
1,&\text{if}\;j\in S;\\
0,&\text{else}.
\end{cases}  
$$   
We call such a weight {\em $1$-bounded} of type $k$, 
following~\cite{ckm}. Below I 
will also use the notation 
$$x_{\nu}:=x_S\quad\text{for}\quad \nu=\nu_S.$$ 

In general I want to consider tensor products of elementary tensors. 
Let $\vec{k}=(k_1,\ldots,k_m)$ be an {\em $n$-bounded} 
$\mathfrak{gl}_m$-weight, i.e. satisfying 
$0\leq k_i\leq n$ for all $i=1,\ldots,m$. Define 
$$\Lambda_q^{\vec{k}}(\C_q^n):=\Lambda_q^{k_m}(\C_q^n)\otimes\cdots\otimes 
\Lambda_q^{k_1}(\C_q^n).$$
\begin{remark}
We write the tensor factors backwards in $\Lambda_q^{\vec{k}}(\C_q)$, which 
is more convenient for skew Howe duality and its categorification 
in this paper.  
\end{remark}
Let $\mathcal{S}(\vec{k},n)$ be the set of 
$m$-tuples 
$$\vec{\nu}=(\nu^1,\ldots,\nu^m)$$ 
such that $\nu^i$ is a $1$-bounded $\mathfrak{gl}_n$-weight of type $k_i$ 
for each $i=1,\ldots,m$. For short I say that $\vec{\nu}$ is $1$-bounded 
of type $\vec{k}$. The elements 
$$
x_{\vec{\nu}}:=x_{\nu^m}\otimes\cdots\otimes x_{\nu^1}
$$ 
form a basis of $\Lambda^{\vec{k}}_q(\C_q^n)$, which I also 
call the {\em standard basis}.

\vskip0.5cm
As already announced, I now will start using the parameter $N\in\N_{\geq 2}$ 
because intertwiners will only occur on one side of quantum skew Howe duality. 
I will also use introduce a second quantum parameter $v:=-q^{-1}$, as explained 
in the introduction.  
 
Let $\mathcal{R}ep(\mathrm{SL}_N)$ be the pivotal category whose objects are 
tensor products of fundamental ${\mathbf U}_q(\mathfrak{sl}_N)$-representations and their duals and 
whose morphisms are intertwiners. Cautis, Kamnitzer and Morrison defined a set of generating intertwiners 
in $\mathcal{R}ep(\mathrm{SL}_N)$. If 
$S,T$ are disjoint subsets of $\{1,\ldots,N\}$, let 
$$\ell(S,T):=\vert \{(i,j)\mid i\in S,\;j\in T\;\text{and}\; i<j\}\vert.$$
following~\cite{kk}. For any $0\leq a,b\leq N$ such that $a+b\leq N$, 
define
\begin{itemize}
\item the usual linear copairing 
$c_a\colon \mathbb{1}\to \Lambda^a_q(\C^N_q)\otimes 
(\Lambda^a_q(\C^N_q))^*$, which is an intertwiner;
\item the usual linear pairing $p_a\colon (\Lambda^a_q(\C^N_q))^*\otimes 
\Lambda^a_q(\C^N_q)\to \mathbb{1}$, which is an intertwiner;
\item the intertwiner $M_{a,b}\colon  
\Lambda_q^a(\C^N_q)\otimes\Lambda_q^b(\C^N_q)\to 
\Lambda_q^{a+b}(\C^N_q)$ by  
\begin{equation}
\label{eq:M}
M_{a,b}(x_S\otimes x_T):=x_S\wedge_q x_T=
\begin{cases}
v^{\ell(T,S)}x_{S\cup T},&\text{if}\; S\cap T=\emptyset;\\
0,&\text{else};
\end{cases}
\end{equation}
\item the intertwiner $M_{a,b}'\colon \Lambda_q^{a+b}(\C^N_q)\to 
\Lambda_q^a(\C^N_q)\otimes\Lambda_q^b(\C^N_q)$ by  
\begin{equation}
\label{eq:M'}
M_{a,b}'(x_S):=\sum_{T\subset S} v^{-\ell(T,S\backslash T)}x_T\otimes 
x_{S\backslash T};
\end{equation}
\item the bijective intertwiner $D_a\colon \Lambda_q^a(\C^N_q)\to 
(\Lambda_q^{N-a}(\C^N_q))^*$ by 
\begin{equation}
\label{eq:D}
D_a(x_S)(x_T):=
\begin{cases}
v^{\ell(T,S)},&\text{if}\; S\cap T=\emptyset;\\
0,&\text{else}.
\end{cases}
\end{equation}
\end{itemize}
Note that the usual linear copairing and pairing above define 
intertwiners, whereas the usual linear copairing and pairing   
$$\mathbb{1}\to (\Lambda^a_q(\C^N_q))^*\otimes \Lambda^a_q(\C^N_q)\quad\text{and}
\quad\Lambda^a_q(\C^N_q)\otimes (\Lambda^a_q(\C^N_q))^*\to \mathbb{1}$$ 
are not intertwiners. To get the intertwiners between those representations 
one has use the copairing and pairing above and 
$D_a$, $D_{N-a}$ and their inverses. For 
a good discussion about this point, see Section 3 in~\cite{mor}. 
 
As remarked in the introduction, I have normalized 
the maps differently from the ones in~\cite{ckm}. 
This is on purpose and I will explain the reason in 
Section~\ref{sec:canonical}. Note that the maps above 
still define intertwiners, because 
\begin{itemize}
\item this $M_{a,b}$ is equal to theirs multiplied by $(-q)^{-ab}$;
\item this $M'_{ab}$ is equal to theirs multiplied by $q^{ab}$;
\item this $D_a$ is equal to theirs multiplied by $(-q)^{-a(N-a)}$.
\end{itemize}

The following result is due to Morrison (Theorem 3.5.8 in~\cite{mor}).
\begin{theorem}[Morrison]
Any morphism in $\mathcal{R}ep(\mathrm{SL}_N)$ can be obtained by tensoring, 
composing and taking linear combinations of morphisms of the form  
$c_a$, $p_a$, $M_{a,b}$, $M_{a,b}'$ and $D_a$, for $a,b=0,\ldots,N$.  
\end{theorem}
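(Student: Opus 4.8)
The plan is to follow Morrison's original strategy from~\cite{mor} and simply track how the renormalization affects the argument, since the renormalized maps differ from the Cautis--Kamnitzer--Morrison maps only by invertible scalars $(-q)^{\pm ab}$ and $(-q)^{\pm a(N-a)}$. First I would recall that the pivotal category $\mathcal{R}ep(\mathrm{SL}_N)$ is generated, as a pivotal category, by the fundamental representations $\Lambda^a_q(\C^N_q)$ for $0\leq a\leq N$, together with the spanning set of morphisms given in the cited theorem of Morrison; the content to be verified is that replacing $M_{a,b},M'_{a,b},D_a$ by their rescaled versions does not change the $\C(q)$-linear span of the composites-and-tensors one can form, so that Morrison's generation statement transports verbatim.

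The key steps, in order: (1) observe that since each renormalization constant is a unit in $\C(q)$, any morphism built from the renormalized generators by tensoring, composing, and taking $\C(q)$-linear combinations lies in the span of morphisms built from the CKM generators in the same way, and conversely; (2) invoke Theorem~3.5.8 of~\cite{mor} (stated above as ``Morrison'') to conclude that the latter span is all of $\mathrm{Hom}$ in $\mathcal{R}ep(\mathrm{SL}_N)$; (3) note that the copairings $c_a$ and pairings $p_a$ are unchanged under the renormalization, and that the cups/caps involving duals in the ``wrong'' order are recovered from $c_a,p_a,D_a,D_{N-a}$ and their inverses exactly as in the CKM/Morrison setup, the inverses again being available because $D_a$ is bijective (its renormalization is still bijective); (4) conclude that every morphism in $\mathcal{R}ep(\mathrm{SL}_N)$ is a $\C(q)$-linear combination of tensor-composites of the renormalized $c_a,p_a,M_{a,b},M'_{a,b},D_a$.

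The main obstacle, such as it is, is purely bookkeeping: one must check that the scalar factors introduced by renormalization are genuinely \emph{global} units that can be pulled out of any diagram, rather than something that could interfere with a relation among the generators. Concretely, a composite of renormalized generators equals the corresponding composite of CKM generators times a product of the renormalization scalars, one factor per generator occurrence; since each such factor is a nonzero element of $\C(q)$, the overall scalar is a unit, and taking $\C(q)$-linear combinations absorbs it. Thus no relation is lost or gained, and the generation statement is stable under renormalization. I would end by remarking that the only reason to record this explicitly is that the renormalized maps are the ones compatible with the parameter $v=-q^{-1}$ and with dual canonical bases, as explained in Section~\ref{sec:canonical}; the theorem itself is a routine transport of Morrison's result.
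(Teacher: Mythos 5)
Your proposal is correct and matches the paper's (implicit) reasoning exactly: the paper states this result as a direct citation to Morrison's Theorem~3.5.8, having just listed the unit scalars $(-q)^{-ab}$, $q^{ab}$, $(-q)^{-a(N-a)}$ relating its renormalized $M_{a,b}$, $M'_{a,b}$, $D_a$ to the CKM ones, leaving the reader to infer that rescaling generators by units does not change the $\C(q)$-span of tensor-composites. You have simply made that inference explicit, which is the right move and adds nothing beyond what the paper intends.
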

\vskip0.5cm
For later use, I explain the {\em state-sum model} for the intertwiners. 
The coefficients in the formula for the intertwiners $M_{a,b}$ and $M'_{a,b}$ 
are powers of $v$ associated to triples of $1$-bounded 
$\mathfrak{gl}_N$-weights of type 
$a$,$b$ and $a+b$.  Similarly, the coefficient in the formula for 
$D_a$ is a power of $v$ associated to a pair of $1$-bounded 
$\mathfrak{gl}_N$-weights of type 
$a$ and $N-a$. By composing and tensoring these elementary 
intertwiners, we see that any intertwiner maps 
basis tensors to linear combinations of basis tensors 
and that the coefficient of each summand is a sum of 
powers of $v$. Each power of $v$ is associated to a particular assignment 
of $1$-bounded $\mathfrak{gl}_N$-weights to the edges of the web which 
represents the intertwiner. 
These assignments have to satisfy compatibility conditions, because 
intertwiners preserve the total weight. 
\begin{definition}\label{def:states}
Let $w$ be any monomial web and let $E(w)$ be the set of its edges. 
A {\em state}  
of $w$ is a map $\sigma\colon E(w)\to \Z^N/\langle (1^N)\rangle$ 
such that 
\begin{enumerate}
\item $\sigma(e)\in W(1^{k_e})\bmod (1^N)$, for any $e\in E(w)$ of color $k_e$;
\item $\sigma(e_1)+\sigma(e_2)\equiv\sigma(e_3)\bmod (1^N)$, 
whenever $e_1,e_2,e_3$ meet 
at a triple vertex and 
$e_1$ and $e_2$ are both oriented in the same direction w.r.t. that vertex;
\item $\sigma(e_1)+\sigma(e_2)\equiv 0\bmod (1^N)$, whenever $e_1$ and $e_2$ meet at a tag.
\end{enumerate}
The set of states of $w$ is denoted $\mathcal{S}(w)$. 
\end{definition} 
\noindent In this way, we see that any intertwiner can be written as a 
{\em state-sum}. 

\subsection{Tensors and tableaux}
\label{sec:TT}
For later use, I recall here two relations between certain basis tensors 
and column-strict tableaux. 

For simplicity, I assume that $m=N\ell$, which is the case of 
interest in this paper. Let $\mathrm{Col}^{(N^{\ell})}$ be the set 
of all {\em column-strict tableaux} of shape $(N^{\ell})$ whose fillings 
are natural numbers between $1$ and $m$. Recall that a 
tableau is called column-strict if its entries are strictly 
increasing along every column from top to bottom.  

Let $\mathrm{Std}^{(N^{\ell})}\subset 
\mathrm{Col}^{(N^{\ell})}$ be the subset of {\em semi-standard} tableaux. These 
satisfy the additional condition that the fillings are weakly increasing 
along each row from left to right. The semi-standard tableaux parametrize the 
canonical and dual canonical bases and also Leclerc and Toffin's intermediate 
basis and its Howe dual, as I will show later.   

For any $N$-bounded $\mathfrak{gl}_m$-weight 
$\vec{k}\in \Lambda(m,m)_N$ (defined in~\eqref{eq:levelmweights}), let 
$\mathrm{Col}^{(N^{\ell})}_{\vec{k}}\subset\mathrm{Col}^{(N^{\ell})}$ 
be the subset of tableaux of type $\vec{k}$. Recall that a tableau 
is of type $\vec{k}$ if its filling contains $k_i$ times the entry 
$i$, for $i=1,\ldots,m$. 
\vskip0.5cm
For the first relation between tableaux and tensors, 
let $\mathcal{S}(\vec{k},N)_0\subset \mathcal{S}(\vec{k},N)$ be 
the subset of elements $\nu$ such that 
$$\sum_{i=1}^m \nu^i=(\ell^N).$$
There is a well-known bijection 
between $\mathcal{S}(\vec{k},N)_0$ and 
$\mathrm{Col}^{(N^{\ell})}_{\vec{k}}$. If $T$ is a tableau, let $T(j)$ denote the 
$j$-th column of $T$. 
The bijection $\vec{\nu}_T\leftrightarrow T$ is determined by the rule  
$$
\nu^i_j=1 \Leftrightarrow i\in T(j)
$$ 
for any $1\leq i\leq m$ and $1\leq j\leq N$. Write 
$$x^T:=x_{\vec{\nu}_T}\in \Lambda^{\vec{k}}_q(\C_q^N)$$
and note that $x^T$ has $\mathfrak{sl}_N$-weight zero.  

\begin{example}\label{ex:young} 
Let $N=3$, $\ell=4$, $m=12$, and $\vec{k}=(2,2,1,2,1,3,1,0,0,0,0,0)$. 
The tableau
$$
\young(112,234,456,667)
$$
corresponds to 
$$
\vec{\nu}=((110),(101),(010),(101),(010),(111),(001)).
$$
\end{example} 
\vskip0.5cm

In the second relation between tableaux and tensors, 
the basis tensors of $\Lambda_q^{\ell}(\C_q^{m})^{\otimes N}$ of 
$\mathrm{gl}_m$-weight $\vec{k}$ are parametrized 
by the tableaux in $\mathrm{Col}^{(N^{\ell})}_{\vec{k}}$. In this case, 
the bijection is 
given by 
$$\mu^i_j=1\leftrightarrow j\in T(i)$$
for any $1\leq i\leq N$ and $1\leq j\leq m$. For any $T\in 
\mathrm{Col}^{(N^{\ell})}_{\vec{k}}$, write  
$$x_T:=x_{\vec{\mu}_T}\in \Lambda_q^{\ell}(\C_q^{m})^{\otimes N}.$$  
\begin{example}
For the tableau in Example~\ref{ex:young} we get 
$$\vec{\mu}=((110101000000),(101011000000),(010101100000)).$$
\end{example}

\begin{remark} Thus to the same tableaux 
$T\in \mathrm{Col}^{(N^{\ell})}_{\vec{k}}$ one 
can associate two different basis tensors in two different tensor 
spaces and the notation in this paper distinguishes the two:
$$x^T\in \Lambda_q^{\vec{k}}(\C_q^N)\quad\text{and}\quad x_T\in 
\Lambda_q^{\ell}(\C_q^{m})^{\otimes N}.$$ 

Both types of basis tensor will be of interest in this paper, because quantum 
skew Howe duality relates them. 
\end{remark}

For later use, I recall that there is a 
total ordering on $\mathrm{Col}^{(N^{\ell})}$. 
Consider columns as increasing sequences. Given two columns 
$c$ and $d$, define 
$$c=(c_1 < \cdots < c_{\ell})\succ d=(d_1 < \cdots < d_{\ell})$$
if the first $c_i$ different from $d_i$ is less than $d_i$. Order the 
columns from left to right. The lexicographical ordering w.r.t. to these 
two orderings gives a total ordering on $\mathrm{Col}^{(N^{\ell})}$. 

\begin{example}
\label{ex:highesttableau}
The greatest tableau in $\mathrm{Col}^{(N^{\ell})}$ w.r.t. this ordering 
is the one whose columns are all equal to $(1,2,\ldots,\ell)$. Let us denote 
it by $T_{\Lambda}$, because it corresponds to the highest weight 
vector of both $V_{\Lambda}$ and $W_{\Lambda}$, as we will see.  
\end{example}    

\section{$\mathrm{SL}_N$ webs}\label{sec:webs}
The morphisms in $\mathcal{R}ep(\mathrm{SL}_N)$ can be represented graphically 
by {\em $\mathfrak{sl}_N$-webs}. These are certain oriented 
trivalent graphs, whose edges 
are colored by integers belonging to $\{0,\ldots,N\}$. 
Webs can be seen as morphisms in a pivotal category, which in the literature is 
called a {\em spider} or {\em spider category}, denoted 
$\mathcal{S}p(\mathrm{SL}_{N})$.  

\subsection{The $\mathrm{SL}_N$ spider}
\label{sec:spider}

Recently, Cautis, Kamnitzer and Morrison~\cite{ckm} gave a presentation 
of $\mathcal{S}p(\mathrm{SL}_{N})$ in terms of generating webs and relations. 

\begin{definition}[Cautis-Kamnitzer-Morrison]
\label{def:webrelations}
The objects of $\mathcal{S}p(\mathrm{SL}_N)$ are finite sequences 
$\vec{k}$ of elements in $\{0^{\pm},\ldots,(N)^{\pm}\}$. 

The hom-space $\mathrm{Hom}(\vec{k},\vec{l})$ is the 
$\mathbb{C}(q)$-vector space freely generated by all diagrams, with 
lower and top boundary labeled from right to left by the entries of 
$\vec{k}$ and $\vec{l}$ respectively, which 
can be obtained by glueing and juxtaposing 
labeled cups and caps and the following elementary webs, together with 
the ones obtained by mirror reflections and arrow reversals: 

\begin{eqnarray*}
\txt{\unitlength 0.1in
\begin{picture}(  4.2700,  4.0000)(  1.7300, -6.0000)
\special{pn 8}%
\special{pa 600 200}%
\special{pa 600 232}%
\special{pa 596 268}%
\special{pa 590 304}%
\special{pa 582 342}%
\special{pa 570 380}%
\special{pa 558 418}%
\special{pa 542 454}%
\special{pa 524 488}%
\special{pa 506 518}%
\special{pa 486 546}%
\special{pa 466 568}%
\special{pa 446 586}%
\special{pa 424 596}%
\special{pa 402 600}%
\special{pa 380 598}%
\special{pa 358 586}%
\special{pa 336 570}%
\special{pa 316 548}%
\special{pa 296 520}%
\special{pa 278 490}%
\special{pa 260 456}%
\special{pa 246 420}%
\special{pa 232 384}%
\special{pa 220 346}%
\special{pa 212 308}%
\special{pa 204 270}%
\special{pa 202 236}%
\special{pa 200 204}%
\special{pa 200 200}%
\special{sp}%
\special{sh 1}%
\special{pa 200 200}%
\special{pa 180 268}%
\special{pa 200 254}%
\special{pa 220 268}%
\special{pa 200 200}%
\special{fp}%
\put(2.0000,-1.0000){\makebox(0,0){${}_{a}$}}%
\end{picture}
\hspace{0.5cm}
\txt{\unitlength 0.1in
\begin{picture}(  4.2700,  4.0000)(  1.7300, -6.0000)
\special{pn 8}%
\special{pa 600 600}%
\special{pa 600 568}%
\special{pa 596 532}%
\special{pa 590 496}%
\special{pa 582 458}%
\special{pa 570 420}%
\special{pa 558 382}%
\special{pa 542 346}%
\special{pa 524 312}%
\special{pa 506 282}%
\special{pa 486 254}%
\special{pa 466 232}%
\special{pa 446 214}%
\special{pa 424 204}%
\special{pa 402 200}%
\special{pa 380 202}%
\special{pa 358 214}%
\special{pa 336 230}%
\special{pa 316 252}%
\special{pa 296 280}%
\special{pa 278 310}%
\special{pa 260 344}%
\special{pa 246 380}%
\special{pa 232 416}%
\special{pa 220 454}%
\special{pa 212 492}%
\special{pa 204 530}%
\special{pa 202 564}%
\special{pa 200 596}%
\special{pa 200 600}%
\special{sp}%
\special{sh 1}%
\special{pa 200 600}%
\special{pa 180 532}%
\special{pa 200 546}%
\special{pa 220 532}%
\special{pa 200 600}%
\special{fp}%
\put(2.0000,-2.0000){\makebox(0,0){${}_{a}$}}%
\end{picture}
\hspace{0.5cm}
\txt{\unitlength 0.1in
\begin{picture}(  8.0000,  7.0000)(  2.0000,-10.0000)
\special{pn 8}%
\special{pa 600 600}%
\special{pa 200 300}%
\special{fp}%
\special{sh 1}%
\special{pa 200 300}%
\special{pa 242 356}%
\special{pa 244 332}%
\special{pa 266 324}%
\special{pa 200 300}%
\special{fp}%
\special{pn 8}%
\special{pa 600 600}%
\special{pa 1000 300}%
\special{fp}%
\special{sh 1}%
\special{pa 1000 300}%
\special{pa 936 324}%
\special{pa 958 332}%
\special{pa 960 356}%
\special{pa 1000 300}%
\special{fp}%
\special{pn 8}%
\special{pa 600 1000}%
\special{pa 600 600}%
\special{fp}%
\special{sh 1}%
\special{pa 600 600}%
\special{pa 580 668}%
\special{pa 600 654}%
\special{pa 620 668}%
\special{pa 600 600}%
\special{fp}%
\put(3.5000,-5.0000){\makebox(0,0){${}_{a}$}}%
\put(8.5000,-5.0000){\makebox(0,0){${}_{b}$}}%
\put(7.50000,-8.0000){\makebox(0,0){${}_{a+b}$}}%
\end{picture}
\hspace{0.5cm}
\txt{\unitlength 0.1in
\begin{picture}(  8.0000,  7.0000)(  2.0000,-10.0000)
\special{pn 8}%
\special{pa 1000 1000}%
\special{pa 600 700}%
\special{fp}%
\special{sh 1}%
\special{pa 600 700}%
\special{pa 642 756}%
\special{pa 644 732}%
\special{pa 666 724}%
\special{pa 600 700}%
\special{fp}%
\special{pn 8}%
\special{pa 200 1000}%
\special{pa 600 700}%
\special{fp}%
\special{sh 1}%
\special{pa 600 700}%
\special{pa 536 724}%
\special{pa 558 732}%
\special{pa 560 756}%
\special{pa 600 700}%
\special{fp}%
\special{pn 8}%
\special{pa 600 700}%
\special{pa 600 300}%
\special{fp}%
\special{sh 1}%
\special{pa 600 300}%
\special{pa 580 368}%
\special{pa 600 354}%
\special{pa 620 368}%
\special{pa 600 300}%
\special{fp}%
\put(8.5000,-8.0000){\makebox(0,0){${}_{b}$}}%
\put(3.5000,-8.0000){\makebox(0,0){${}_{a}$}}%
\put(7.50000,-5.0000){\makebox(0,0){${}_{a+b}$}}%
\end{picture}
\hspace{0.5cm}
\txt{\unitlength 0.1in
\begin{picture}(  4.0000,  7.0000)(  2.0000,-10.0000)
\put(3.5000,-9.0000){\makebox(0,0){${}_{a}$}}%
\special{pn 8}%
\special{pa 200 1000}%
\special{pa 400 650}%
\special{fp}%
\special{sh 1}%
\special{pa 300 825}
\special{pa 250 873}
\special{pa 274 871}
\special{pa 284 893}
\special{pa 300 825}%
\special{fp}%
\special{pn 8}%
\special{pa 600 300}%
\special{pa 400 650}%
\special{fp}%
\special{pn 8}%
\special{pa 330 610}%
\special{pa 400 650}%
\special{fp}%
\special{sh 1}%
\special{pa 500 475}%
\special{pa 550 427}%
\special{pa 526 429}%
\special{pa 516 407}%
\special{pa 500 475}%
\special{fp}%
\put(3.250000,-4.0000){\makebox(0,0){${}_{N-a}$}}%
\end{picture}
\hspace{0.5cm}
\txt{\unitlength 0.1in
\begin{picture}(  4.0000,  7.0000)(  2.0000,-10.0000)
\put(3.5000,-9.0000){\makebox(0,0){${}_{a}$}}%
\special{pn 8}%
\special{pa 200 1000}%
\special{pa 400 650}%
\special{fp}%
\special{sh 1}%
\special{pa 500 475}%
\special{pa 450 523}%
\special{pa 474 521}%
\special{pa 484 543}%
\special{pa 500 475}%
\special{fp}%
\special{pn 8}%
\special{pa 600 300}%
\special{pa 400 650}%
\special{fp}%
\special{pn 8}%
\special{pa 330 610}%
\special{pa 400 650}%
\special{fp}%
\special{sh 1}%
\special{pa 300 825}%
\special{pa 350 777}%
\special{pa 326 779}%
\special{pa 316 757}%
\special{pa 300 825}%
\special{fp}%
\put(3.50000,-4.0000){\makebox(0,0){${}_{N-a}$}}%
\special{pn 4}%
\special{sh 1}%
\special{ar 400 650 5 5 0  6.28318530717959E+0000}%
\end{picture}
\end{eqnarray*}
with all labels between $0$ and $N$,  
modded out by planar isotopies (e.g. the zig-zag relations for cups and caps) 
and the following relations:
\begin{eqnarray}
\label{eq:tagswitch}\txt{\unitlength 0.1in
\begin{picture}(  4.0000,  7.0000)(  2.0000,-10.0000)
\put(3.5000,-9.0000){\makebox(0,0){${}_{a}$}}%
\special{pn 8}%
\special{pa 200 1000}%
\special{pa 400 650}%
\special{fp}%
\special{sh 1}%
\special{pa 300 825}
\special{pa 250 873}
\special{pa 274 871}
\special{pa 284 893}
\special{pa 300 825}%
\special{fp}%
\special{pn 8}%
\special{pa 600 300}%
\special{pa 400 650}%
\special{fp}%
\special{pn 8}%
\special{pa 330 610}%
\special{pa 400 650}%
\special{fp}%
\special{sh 1}%
\special{pa 500 475}%
\special{pa 550 427}%
\special{pa 526 429}%
\special{pa 516 407}%
\special{pa 500 475}%
\special{fp}%
\put(3.250000,-4.0000){\makebox(0,0){${}_{N-a}$}}%
\end{picture}
\begin{picture}(  4.0000,  7.0000)(  2.0000,-10.0000)
\put(3.5000,-9.0000){\makebox(0,0){${}_{a}$}}%
\special{pn 8}%
\special{pa 200 1000}%
\special{pa 400 650}%
\special{fp}%
\special{sh 1}%
\special{pa 300 825}%
\special{pa 250 873}%
\special{pa 274 871}%
\special{pa 284 893}%
\special{pa 300 825}%
\special{fp}%
\special{pn 8}%
\special{pa 600 300}%
\special{pa 400 650}%
\special{fp}%
\special{pn 8}%
\special{pa 470 690}%
\special{pa 400 650}%
\special{fp}%
\special{sh 1}%
\special{pa 500 475}%
\special{pa 550 427}%
\special{pa 526 429}%
\special{pa 516 407}%
\special{pa 500 475}%
\special{fp}%
\put(3.250000,-4.0000){\makebox(0,0){${}_{N-a}$}}%
\end{picture}
\\[1em]
\label{eq:paralleldigon}\txt{\unitlength 0.1in
\begin{picture}(  2.5400,  6.0000)(  2.7300, -8.0000)
\special{pn 8}%
\special{pa 400 800}%
\special{pa 400 650}%
\special{fp}%
\special{sh 1}%
\special{pa 400 650}%
\special{pa 380 718}%
\special{pa 400 704}%
\special{pa 420 718}%
\special{pa 400 650}%
\special{fp}%
\special{pn 8}%
\special{pa 400 350}%
\special{pa 400 200}%
\special{fp}%
\special{sh 1}%
\special{pa 400 200}%
\special{pa 380 268}%
\special{pa 400 254}%
\special{pa 420 268}%
\special{pa 400 200}%
\special{fp}%
\special{pn 8}%
\special{ar 400 500 100 150  0.0000000 6.2831853}%
\special{sh 1}%
\special{pa 500 475}%
\special{pa 480 543}%
\special{pa 500 529}%
\special{pa 520 543}%
\special{pa 500 475}%
\special{fp}%
\special{sh 1}%
\special{pa 300 475}%
\special{pa 280 543}%
\special{pa 300 529}%
\special{pa 320 543}%
\special{pa 300 475}%
\special{fp}%
\put(4.0000,-9.0000){\makebox(0,0){${}_{b+a}$}}%
\put(4.0000,-1.0000){\makebox(0,0){${}_{b+a}$}}%
\put(2.0000,-5.0000){\makebox(0,0){${}_{b}$}}%
\put(6.0000,-5.0000){\makebox(0,0){${}_{a}$}}%
\end{picture}
\begin{picture}(  1.0200,  6.0000)(  3.7300, -8.0000)
\special{pn 8}%
\special{pa 400 800}%
\special{pa 400 200}%
\special{fp}%
\special{sh 1}%
\special{pa 400 500}%
\special{pa 380 568}%
\special{pa 400 554}%
\special{pa 420 568}%
\special{pa 400 500}%
\special{fp}%
\put(6.0000,-5.0000){\makebox(0,0){${}_{b+a}$}}%
\end{picture}%
}
\\[1.5em]
\label{eq:oppositedigon}\txt{\unitlength 0.1in
\begin{picture}(  2.5400,  6.0000)(  2.7300, -8.0000)
\special{pn 8}%
\special{pa 400 800}%
\special{pa 400 650}%
\special{fp}%
\special{sh 1}%
\special{pa 400 650}%
\special{pa 380 718}%
\special{pa 400 704}%
\special{pa 420 718}%
\special{pa 400 650}%
\special{fp}%
\special{pn 8}%
\special{pa 400 350}%
\special{pa 400 200}%
\special{fp}%
\special{sh 1}%
\special{pa 400 200}%
\special{pa 380 268}%
\special{pa 400 254}%
\special{pa 420 268}%
\special{pa 400 200}%
\special{fp}%
\special{pn 8}%
\special{ar 400 500 100 150  0.0000000 6.2831853}%
\special{sh 1}%
\special{pa 500 475}%
\special{pa 480 543}%
\special{pa 500 529}%
\special{pa 520 543}%
\special{pa 500 475}%
\special{fp}%
\special{sh 1}%
\special{pa 300 525}%
\special{pa 320 459}%
\special{pa 300 473}%
\special{pa 280 459}%
\special{pa 300 525}%
\special{fp}%
\put(4.0000,-9.0000){\makebox(0,0){${}_{a}$}}%
\put(4.0000,-1.0000){\makebox(0,0){${}_{a}$}}%
\put(2.0000,-5.0000){\makebox(0,0){${}_{b}$}}%
\put(7.0000,-5.0000){\makebox(0,0){${}_{b+a}$}}%
\end{picture}
\begin{picture}(  1.0200,  6.0000)(  3.7300, -8.0000)
\special{pn 8}%
\special{pa 400 800}%
\special{pa 400 200}%
\special{fp}%
\special{sh 1}%
\special{pa 400 500}%
\special{pa 380 568}%
\special{pa 400 554}%
\special{pa 420 568}%
\special{pa 400 500}%
\special{fp}%
\put(5.50000,-5.0000){\makebox(0,0){${}_{a}$}}%
\end{picture}%
}
\\[1em]
\label{eq:associativity}\txt{\unitlength 0.1in
\begin{picture}(  8.0000,  6.0000)(  2.0000, -8.0000)
\special{pn 8}%
\special{pa 600 400}%
\special{pa 600 200}%
\special{fp}%
\special{sh 1}%
\special{pa 600 300}%
\special{pa 580 368}%
\special{pa 600 354}%
\special{pa 620 368}%
\special{pa 600 300}%
\special{fp}%
\special{pn 8}%
\special{pa 200 800}%
\special{pa 600 400}%
\special{fp}%
\special{sh 1}%
\special{pa 500 500}%
\special{pa 440 534}%
\special{pa 462 538}%
\special{pa 468 562}%
\special{pa 500 500}%
\special{fp}%
\special{sh 1}%
\special{pa 300 700}%
\special{pa 240 734}%
\special{pa 262 738}%
\special{pa 268 762}%
\special{pa 300 700}%
\special{fp}%
\special{pn 8}%
\special{pa 1000 800}%
\special{pa 600 400}%
\special{fp}%
\special{sh 1}%
\special{pa 800 600}%
\special{pa 834 662}%
\special{pa 838 638}%
\special{pa 862 634}%
\special{pa 800 600}%
\special{fp}%
\special{pn 8}%
\special{pa 600 800}%
\special{pa 400 600}%
\special{fp}%
\special{sh 1}%
\special{pa 500 700}%
\special{pa 534 762}%
\special{pa 538 738}%
\special{pa 562 734}%
\special{pa 500 700}%
\special{fp}%
\put(9.50000,-3.0000){\makebox(0,0){${}_{a+b+c}$}}%
\put(3.0000,-5.0000){\makebox(0,0){${}_{a+b}$}}%
\put(2.0000,-7.0000){\makebox(0,0){${}_{a}$}}%
\put(6.0000,-7.0000){\makebox(0,0){${}_{b}$}}%
\put(10.0000,-7.0000){\makebox(0,0){${}_{c}$}}%
\end{picture}
\begin{picture}(  8.0000,  6.0000)(  2.0000, -8.0000)
\special{pn 8}%
\special{pa 600 400}%
\special{pa 600 200}%
\special{fp}%
\special{sh 1}%
\special{pa 600 300}%
\special{pa 580 368}%
\special{pa 600 354}%
\special{pa 620 368}%
\special{pa 600 300}%
\special{fp}%
\special{pn 8}%
\special{pa 200 800}%
\special{pa 600 400}%
\special{fp}%
\special{sh 1}%
\special{pa 400 600}%
\special{pa 340 634}%
\special{pa 362 638}%
\special{pa 368 662}%
\special{pa 400 600}%
\special{fp}%
\special{pn 8}%
\special{pa 1000 800}%
\special{pa 600 400}%
\special{fp}%
\special{sh 1}%
\special{pa 700 500}%
\special{pa 734 562}%
\special{pa 738 538}%
\special{pa 762 534}%
\special{pa 700 500}%
\special{fp}%
\special{sh 1}%
\special{pa 900 700}%
\special{pa 934 762}%
\special{pa 938 738}%
\special{pa 962 734}%
\special{pa 900 700}%
\special{fp}%
\special{pn 8}%
\special{pa 600 800}%
\special{pa 800 600}%
\special{fp}%
\special{sh 1}%
\special{pa 700 700}%
\special{pa 640 734}%
\special{pa 662 738}%
\special{pa 668 762}%
\special{pa 700 700}%
\special{fp}%
\put(9.50000,-3.0000){\makebox(0,0){${}_{a+b+c}$}}%
\put(9.50000,-5.0000){\makebox(0,0){${}_{b+c}$}}%
\put(2.0000,-7.0000){\makebox(0,0){${}_{a}$}}%
\put(6.0000,-7.0000){\makebox(0,0){${}_{b}$}}%
\put(10.0000,-7.0000){\makebox(0,0){${}_{c}$}}%
\end{picture}
\\[1em]
\label{eq:parallelsquare}\hspace{1cm}\txt{\unitlength 0.1in
\begin{picture}(  6.5400,  6.0000)(  1.7300, -8.0000)
\special{pn 8}%
\special{pa 200 800}%
\special{pa 200 200}%
\special{fp}%
\special{sh 1}%
\special{pa 200 250}%
\special{pa 180 318}%
\special{pa 200 304}%
\special{pa 220 318}%
\special{pa 200 250}%
\special{fp}%
\special{sh 1}%
\special{pa 200 500}%
\special{pa 180 568}%
\special{pa 200 554}%
\special{pa 220 568}%
\special{pa 200 500}%
\special{fp}%
\special{sh 1}%
\special{pa 200 700}%
\special{pa 180 768}%
\special{pa 200 754}%
\special{pa 220 768}%
\special{pa 200 700}%
\special{fp}%
\special{pn 8}%
\special{pa 800 800}%
\special{pa 800 200}%
\special{fp}%
\special{sh 1}%
\special{pa 800 250}%
\special{pa 780 318}%
\special{pa 800 304}%
\special{pa 820 318}%
\special{pa 800 250}%
\special{fp}%
\special{sh 1}%
\special{pa 800 400}%
\special{pa 780 468}%
\special{pa 800 454}%
\special{pa 820 468}%
\special{pa 800 400}%
\special{fp}%
\special{sh 1}%
\special{pa 800 700}%
\special{pa 780 768}%
\special{pa 800 754}%
\special{pa 820 768}%
\special{pa 800 700}%
\special{fp}%
\special{pn 8}%
\special{pa 200 650}%
\special{pa 800 550}%
\special{fp}%
\special{sh 1}%
\special{pa 500 600}%
\special{pa 432 592}%
\special{pa 448 610}%
\special{pa 438 632}%
\special{pa 500 600}%
\special{fp}%
\special{pn 8}%
\special{pa 200 450}%
\special{pa 800 350}%
\special{fp}%
\special{sh 1}%
\special{pa 500 400}%
\special{pa 432 392}%
\special{pa 448 410}%
\special{pa 438 432}%
\special{pa 500 400}%
\special{fp}%
\put(1.0000,-8.0000){\makebox(0,0){${}_{a}$}}%
\put(9.0000,-8.0000){\makebox(0,0){${}_{b}$}}%
\put(0.0000,-5.50000){\makebox(0,0){${}_{a-s}$}}%
\put(10.0000,-4.50000){\makebox(0,0){${}_{b+s}$}}%
\put(5.0000,-7.0000){\makebox(0,0){${}_{s}$}}%
\put(5.0000,-3.0000){\makebox(0,0){${}_{t}$}}%
\put(-1.0000,-2.0000){\makebox(0,0){${}_{a-s-t}$}}%
\put(11.0000,-2.0000){\makebox(0,0){${}_{b+s+t}$}}%
\end{picture}
\begin{picture}(  6.5400,  6.0000)(  1.7300, -8.0000)
\special{pn 8}%
\special{pa 200 800}%
\special{pa 200 200}%
\special{fp}%
\special{sh 1}%
\special{pa 200 300}%
\special{pa 180 368}%
\special{pa 200 354}%
\special{pa 220 368}%
\special{pa 200 300}%
\special{fp}%
\special{sh 1}%
\special{pa 200 650}%
\special{pa 180 718}%
\special{pa 200 704}%
\special{pa 220 718}%
\special{pa 200 650}%
\special{fp}%
\special{pn 8}%
\special{pa 800 800}%
\special{pa 800 200}%
\special{fp}%
\special{sh 1}%
\special{pa 800 300}%
\special{pa 780 368}%
\special{pa 800 354}%
\special{pa 820 368}%
\special{pa 800 300}%
\special{fp}%
\special{sh 1}%
\special{pa 800 650}%
\special{pa 780 718}%
\special{pa 800 704}%
\special{pa 820 718}%
\special{pa 800 650}%
\special{fp}%
\special{pn 8}%
\special{pa 200 550}%
\special{pa 800 450}%
\special{fp}%
\special{sh 1}%
\special{pa 500 500}%
\special{pa 432 492}%
\special{pa 448 510}%
\special{pa 438 532}%
\special{pa 500 500}%
\special{fp}%
\put(1.0000,-8.0000){\makebox(0,0){${}_{a}$}}%
\put(9.0000,-8.0000){\makebox(0,0){${}_{b}$}}%
\put(5.0000,-4.0000){\makebox(0,0){${}_{s+t}$}}%
\put(-1.0000,-2.0000){\makebox(0,0){${}_{a-s-t}$}}%
\put(11.0000,-2.0000){\makebox(0,0){${}_{b+s+t}$}}%
\end{picture}
\\[1em]
\label{eq:oppositesquare}\hspace{1cm}\txt{\unitlength 0.1in
\begin{picture}(  6.5400,  6.0000)(  1.7300, -8.0000)
\special{pn 8}%
\special{pa 200 800}%
\special{pa 200 200}%
\special{fp}%
\special{sh 1}%
\special{pa 200 250}%
\special{pa 180 318}%
\special{pa 200 304}%
\special{pa 220 318}%
\special{pa 200 250}%
\special{fp}%
\special{sh 1}%
\special{pa 200 500}%
\special{pa 180 568}%
\special{pa 200 554}%
\special{pa 220 568}%
\special{pa 200 500}%
\special{fp}%
\special{sh 1}%
\special{pa 200 700}%
\special{pa 180 768}%
\special{pa 200 754}%
\special{pa 220 768}%
\special{pa 200 700}%
\special{fp}%
\special{pn 8}%
\special{pa 800 800}%
\special{pa 800 200}%
\special{fp}%
\special{sh 1}%
\special{pa 800 250}%
\special{pa 780 318}%
\special{pa 800 304}%
\special{pa 820 318}%
\special{pa 800 250}%
\special{fp}%
\special{sh 1}%
\special{pa 800 500}%
\special{pa 780 568}%
\special{pa 800 554}%
\special{pa 820 568}%
\special{pa 800 500}%
\special{fp}%
\special{sh 1}%
\special{pa 800 700}%
\special{pa 780 768}%
\special{pa 800 754}%
\special{pa 820 768}%
\special{pa 800 700}%
\special{fp}%
\special{pn 8}%
\special{pa 200 700}%
\special{pa 800 600}%
\special{fp}%
\special{sh 1}%
\special{pa 500 650}%
\special{pa 432 642}%
\special{pa 448 660}%
\special{pa 438 682}%
\special{pa 500 650}%
\special{fp}%
\special{pn 8}%
\special{pa 800 425}%
\special{pa 200 325}%
\special{fp}%
\special{sh 1}%
\special{pa 500 375}%
\special{pa 562 407}%
\special{pa 554 385}%
\special{pa 570 367}%
\special{pa 500 375}%
\special{fp}%
\put(1.0000,-8.0000){\makebox(0,0){${}_{a}$}}%
\put(9.0000,-8.0000){\makebox(0,0){${}_{b}$}}%
\put(0.0000,-5.0000){\makebox(0,0){${}_{a-s}$}}%
\put(10.0000,-5.0000){\makebox(0,0){${}_{b+s}$}}%
\put(5.0000,-7.0000){\makebox(0,0){${}_{s}$}}%
\put(5.0000,-3.0000){\makebox(0,0){${}_{t}$}}%
\put(-1.0000,-2.0000){\makebox(0,0){${}_{a-s+t}$}}%
\put(11.0000,-2.0000){\makebox(0,0){${}_{b+s-t}$}}%
\end{picture}
\begin{picture}(  6.5400,  6.0000)(  1.7300, -8.0000)
\special{pn 8}%
\special{pa 200 800}%
\special{pa 200 200}%
\special{fp}%
\special{sh 1}%
\special{pa 200 250}%
\special{pa 180 318}%
\special{pa 200 304}%
\special{pa 220 318}%
\special{pa 200 250}%
\special{fp}%
\special{sh 1}%
\special{pa 200 500}%
\special{pa 180 568}%
\special{pa 200 554}%
\special{pa 220 568}%
\special{pa 200 500}%
\special{fp}%
\special{sh 1}%
\special{pa 200 700}%
\special{pa 180 768}%
\special{pa 200 754}%
\special{pa 220 768}%
\special{pa 200 700}%
\special{fp}%
\special{pn 8}%
\special{pa 800 800}%
\special{pa 800 200}%
\special{fp}%
\special{sh 1}%
\special{pa 800 250}%
\special{pa 780 318}%
\special{pa 800 304}%
\special{pa 820 318}%
\special{pa 800 250}%
\special{fp}%
\special{sh 1}%
\special{pa 800 500}%
\special{pa 780 568}%
\special{pa 800 554}%
\special{pa 820 568}%
\special{pa 800 500}%
\special{fp}%
\special{sh 1}%
\special{pa 800 700}%
\special{pa 780 768}%
\special{pa 800 754}%
\special{pa 820 768}%
\special{pa 800 700}%
\special{fp}%
\special{pn 8}%
\special{pa 800 700}%
\special{pa 200 600}%
\special{fp}%
\special{sh 1}%
\special{pa 500 650}%
\special{pa 562 682}%
\special{pa 554 660}%
\special{pa 570 642}%
\special{pa 500 650}%
\special{fp}%
\special{pn 8}%
\special{pa 200 425}%
\special{pa 800 325}%
\special{fp}%
\special{sh 1}%
\special{pa 500 375}%
\special{pa 432 367}%
\special{pa 448 385}%
\special{pa 438 407}%
\special{pa 500 375}%
\special{fp}%
\put(1.0000,-8.0000){\makebox(0,0){${}_{a}$}}%
\put(9.0000,-8.0000){\makebox(0,0){${}_{b}$}}%
\put(-1.0000,-5.0000){\makebox(0,0){${}_{a+t-r}$}}%
\put(11.0000,-5.0000){\makebox(0,0){${}_{b+r-t}$}}%
\put(5.0000,-7.250000){\makebox(0,0){${}_{t-r}$}}%
\put(5.0000,-3.0000){\makebox(0,0){${}_{s-r}$}}%
\put(-1.0000,-2.0000){\makebox(0,0){${}_{a-s+t}$}}%
\put(11.0000,-2.0000){\makebox(0,0){${}_{b+s-t}$}}%
\end{picture}
\end{eqnarray}
together with the analogous relations obtained by mirror reflections and 
arrow reversals. 
\end{definition}
\vskip0.5cm
Let $\Gamma_N\colon \mathcal{S}p(\mathrm{SL}_N)\to \mathcal{R}ep(\mathrm{SL}_N)$ be the pivotal functor defined on objects by 
\begin{eqnarray*}
\vec{k}=(k_1^{\epsilon_1},\ldots,k_m^{\epsilon_m})&\mapsto& 
\Lambda^{\vec{k}}_q(\C_q^N)=(\Lambda^{k_m}_q(\C^N_q))^{\epsilon_m}\otimes 
\cdots \otimes (\Lambda^{k_1}_q(\C^N_q))^{\epsilon_m},\\
\end{eqnarray*}  
where $V^1:=V$ and $V^{-1}:=V^*$ by definition. 
On morphisms, define $\Gamma_N$ by 
\begin{eqnarray*}
\txt{\unitlength 0.1in
\begin{picture}(  4.2700,  4.0000)(  1.7300, -6.0000)
\special{pn 8}%
\special{pa 600 200}%
\special{pa 600 232}%
\special{pa 596 268}%
\special{pa 590 304}%
\special{pa 582 342}%
\special{pa 570 380}%
\special{pa 558 418}%
\special{pa 542 454}%
\special{pa 524 488}%
\special{pa 506 518}%
\special{pa 486 546}%
\special{pa 466 568}%
\special{pa 446 586}%
\special{pa 424 596}%
\special{pa 402 600}%
\special{pa 380 598}%
\special{pa 358 586}%
\special{pa 336 570}%
\special{pa 316 548}%
\special{pa 296 520}%
\special{pa 278 490}%
\special{pa 260 456}%
\special{pa 246 420}%
\special{pa 232 384}%
\special{pa 220 346}%
\special{pa 212 308}%
\special{pa 204 270}%
\special{pa 202 236}%
\special{pa 200 204}%
\special{pa 200 200}%
\special{sp}%
\special{sh 1}%
\special{pa 200 200}%
\special{pa 180 268}%
\special{pa 200 254}%
\special{pa 220 268}%
\special{pa 200 200}%
\special{fp}%
\put(2.0000,-1.0000){\makebox(0,0){${}_{a}$}}%
\end{picture}
\txt{\unitlength 0.1in
\begin{picture}(  4.2700,  4.0000)(  1.7300, -6.0000)
\special{pn 8}%
\special{pa 600 600}%
\special{pa 600 568}%
\special{pa 596 532}%
\special{pa 590 496}%
\special{pa 582 458}%
\special{pa 570 420}%
\special{pa 558 382}%
\special{pa 542 346}%
\special{pa 524 312}%
\special{pa 506 282}%
\special{pa 486 254}%
\special{pa 466 232}%
\special{pa 446 214}%
\special{pa 424 204}%
\special{pa 402 200}%
\special{pa 380 202}%
\special{pa 358 214}%
\special{pa 336 230}%
\special{pa 316 252}%
\special{pa 296 280}%
\special{pa 278 310}%
\special{pa 260 344}%
\special{pa 246 380}%
\special{pa 232 416}%
\special{pa 220 454}%
\special{pa 212 492}%
\special{pa 204 530}%
\special{pa 202 564}%
\special{pa 200 596}%
\special{pa 200 600}%
\special{sp}%
\special{sh 1}%
\special{pa 200 600}%
\special{pa 180 532}%
\special{pa 200 546}%
\special{pa 220 532}%
\special{pa 200 600}%
\special{fp}%
\put(2.0000,-2.0000){\makebox(0,0){${}_{a}$}}%
\end{picture}
\txt{\unitlength 0.1in
\begin{picture}(  8.0000,  7.0000)(  2.0000,-10.0000)
\special{pn 8}%
\special{pa 600 600}%
\special{pa 200 300}%
\special{fp}%
\special{sh 1}%
\special{pa 200 300}%
\special{pa 242 356}%
\special{pa 244 332}%
\special{pa 266 324}%
\special{pa 200 300}%
\special{fp}%
\special{pn 8}%
\special{pa 600 600}%
\special{pa 1000 300}%
\special{fp}%
\special{sh 1}%
\special{pa 1000 300}%
\special{pa 936 324}%
\special{pa 958 332}%
\special{pa 960 356}%
\special{pa 1000 300}%
\special{fp}%
\special{pn 8}%
\special{pa 600 1000}%
\special{pa 600 600}%
\special{fp}%
\special{sh 1}%
\special{pa 600 600}%
\special{pa 580 668}%
\special{pa 600 654}%
\special{pa 620 668}%
\special{pa 600 600}%
\special{fp}%
\put(3.5000,-5.0000){\makebox(0,0){${}_{a}$}}%
\put(8.5000,-5.0000){\makebox(0,0){${}_{b}$}}%
\put(7.50000,-8.0000){\makebox(0,0){${}_{a+b}$}}%
\end{picture}
\txt{\unitlength 0.1in
\begin{picture}(  8.0000,  7.0000)(  2.0000,-10.0000)
\special{pn 8}%
\special{pa 1000 1000}%
\special{pa 600 700}%
\special{fp}%
\special{sh 1}%
\special{pa 600 700}%
\special{pa 642 756}%
\special{pa 644 732}%
\special{pa 666 724}%
\special{pa 600 700}%
\special{fp}%
\special{pn 8}%
\special{pa 200 1000}%
\special{pa 600 700}%
\special{fp}%
\special{sh 1}%
\special{pa 600 700}%
\special{pa 536 724}%
\special{pa 558 732}%
\special{pa 560 756}%
\special{pa 600 700}%
\special{fp}%
\special{pn 8}%
\special{pa 600 700}%
\special{pa 600 300}%
\special{fp}%
\special{sh 1}%
\special{pa 600 300}%
\special{pa 580 368}%
\special{pa 600 354}%
\special{pa 620 368}%
\special{pa 600 300}%
\special{fp}%
\put(8.5000,-8.0000){\makebox(0,0){${}_{b}$}}%
\put(3.5000,-8.0000){\makebox(0,0){${}_{a}$}}%
\put(7.50000,-5.0000){\makebox(0,0){${}_{a+b}$}}%
\end{picture}
\txt{\unitlength 0.1in
\begin{picture}(  4.0000,  7.0000)(  2.0000,-10.0000)
\put(3.5000,-9.0000){\makebox(0,0){${}_{a}$}}%
\special{pn 8}%
\special{pa 200 1000}%
\special{pa 400 650}%
\special{fp}%
\special{sh 1}%
\special{pa 300 825}
\special{pa 250 873}
\special{pa 274 871}
\special{pa 284 893}
\special{pa 300 825}%
\special{fp}%
\special{pn 8}%
\special{pa 600 300}%
\special{pa 400 650}%
\special{fp}%
\special{pn 8}%
\special{pa 330 610}%
\special{pa 400 650}%
\special{fp}%
\special{sh 1}%
\special{pa 500 475}%
\special{pa 550 427}%
\special{pa 526 429}%
\special{pa 516 407}%
\special{pa 500 475}%
\special{fp}%
\put(3.250000,-4.0000){\makebox(0,0){${}_{N-a}$}}%
\end{picture}
\txt{\unitlength 0.1in
\begin{picture}(  4.0000,  7.0000)(  2.0000,-10.0000)
\put(3.5000,-9.0000){\makebox(0,0){${}_{a}$}}%
\special{pn 8}%
\special{pa 200 1000}%
\special{pa 400 650}%
\special{fp}%
\special{sh 1}%
\special{pa 300 825}%
\special{pa 250 873}%
\special{pa 274 871}%
\special{pa 284 893}%
\special{pa 300 825}%
\special{fp}%
\special{pn 8}%
\special{pa 600 300}%
\special{pa 400 650}%
\special{fp}%
\special{pn 8}%
\special{pa 470 690}%
\special{pa 400 650}%
\special{fp}%
\special{sh 1}%
\special{pa 500 475}%
\special{pa 550 427}%
\special{pa 526 429}%
\special{pa 516 407}%
\special{pa 500 475}%
\special{fp}%
\put(3.250000,-4.0000){\makebox(0,0){${}_{N-a}$}}%
\end{picture}
\end{eqnarray*}
The following result can be found in~\cite{ckm} (Theorems 3.2.1 and 3.3.1):
\begin{theorem}[Cautis-Kamnitzer-Morrison]\label{thm:ckm}
The functor $\Gamma_N$ is well defined and gives an equivalence of 
pivotal categories. 
\end{theorem}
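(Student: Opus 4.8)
The plan is to prove the two assertions separately: first that $\Gamma_N$ is well defined, i.e. that it descends through the relations of Definition~\ref{def:webrelations}, and then that the resulting functor is an equivalence of pivotal categories.

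\emph{Well-definedness.} Since $\Gamma_N$ is prescribed on the generating webs, it suffices to check that each defining relation of $\mathcal{S}p(\mathrm{SL}_N)$ — the planar isotopy (zig-zag) relations, the tag relations including~\eqref{eq:tagswitch}, and the relations~\eqref{eq:paralleldigon}--\eqref{eq:oppositesquare} — is sent to a valid identity of intertwiners. I would do this by direct computation in the standard bases, using the explicit formulas~\eqref{eq:M}, \eqref{eq:M'}, \eqref{eq:D} for $M_{a,b}$, $M'_{a,b}$, $D_a$ together with the state-sum description of arbitrary composites. The zig-zag and tag relations amount to bookkeeping of the sign $(-1)^{a(N-a)}$ and of the powers of $v$ coming from the functions $\ell(S,T)$; here the twisting of the ``wrong-way'' cup and cap by $D_a$ and $D_{N-a}$ (cf.\ Section~3 of~\cite{mor}) is precisely what makes the pivotal structure consistent. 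After expanding $M'_{a,b}$ as a sum over subsets, the digon relations~\eqref{eq:paralleldigon}--\eqref{eq:oppositedigon} and the square relations~\eqref{eq:parallelsquare}--\eqref{eq:oppositesquare} reduce to $v$-binomial identities of Gauss--Vandermonde type, while the associativity relation~\eqref{eq:associativity} is just the coassociativity $M_{a+b,c}(M_{a,b}\otimes\mathrm{id})=M_{a,b+c}(\mathrm{id}\otimes M_{b,c})$, which is immediate from~\eqref{eq:M}. The chosen normalizations of $M_{a,b}$, $M'_{a,b}$, $D_a$ only change all of these by overall powers of $-q$, so they do not affect well-definedness.

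\emph{Essential surjectivity and fullness.} Every object of $\mathcal{R}ep(\mathrm{SL}_N)$ is by definition a tensor product of fundamental ${\mathbf U}_q(\mathfrak{sl}_N)$-representations and their duals, hence lies in the image of $\Gamma_N$ on objects, so $\Gamma_N$ is (even surjective, hence) essentially surjective. Fullness is exactly Morrison's theorem quoted above: every intertwiner between such tensor products is a $\C(q)$-linear combination of composites and tensor products of $c_a$, $p_a$, $M_{a,b}$, $M'_{a,b}$, $D_a$, i.e.\ lies in the image of $\Gamma_N$ on morphisms.

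\emph{Faithfulness.} This is the main obstacle: one must show the relations of Definition~\ref{def:webrelations} are enough, i.e.\ that $\dim_{\C(q)}\mathrm{Hom}_{\mathcal{S}p(\mathrm{SL}_N)}(\vec k,\vec l)\le\dim_{\C(q)}\mathrm{Hom}_{{\mathbf U}_q(\mathfrak{sl}_N)}(\Lambda^{\vec k}_q(\C_q^N),\Lambda^{\vec l}_q(\C_q^N))$ for all $\vec k,\vec l$, since together with fullness this forces $\Gamma_N$ to be an isomorphism on each hom-space. I would argue as follows: using the digon and square relations, reduce an arbitrary web to a $\C(q)$-linear combination of \emph{ladder} (equivalently, non-elliptic) webs, producing an explicit spanning set of each hom-space; then identify this spanning set, via quantum skew Howe duality, with a standard set of morphisms for $\U{m}$ acting on a tensor product of exterior powers of $\C_q^m$, so that vertical composition of ladders matches multiplication of Chevalley generators and the ideal generated by the web relations matches the kernel of the action map $\U{m}\to\mathrm{End}$. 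The dimension of the latter morphism space is computed by representation theory — weight-space multiplicities, equivalently counts of column-strict tableaux as in Section~\ref{sec:TT} — which yields the required inequality. Finally, checking pivotality is routine once faithfulness is in place: the cups, caps and tags correspond under $\Gamma_N$ exactly to the (co)evaluations $c_a$, $p_a$ twisted by the $D_a$, so the left and right duality data agree and are intertwined by $\Gamma_N$. Combining well-definedness, essential surjectivity, fullness, faithfulness and pivotality gives the stated equivalence.
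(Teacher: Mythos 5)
The paper does not actually prove this theorem: it cites it as Theorems~3.2.1 and~3.3.1 of~\cite{ckm}. The only new content it supplies is the remark immediately following the statement, explaining why the theorem still holds with the paper's \emph{renormalized} intertwiners: since the $M_{a,b}$, $M'_{a,b}$, $D_a$ used here differ from the CKM intertwiners by the overall scalars $(-q)^{-ab}$, $q^{ab}$ and $(-q)^{-a(N-a)}$, the $q$-form of the spider relations picks up certain signs, and these signs are exactly absorbed by writing the relations of Definition~\ref{def:webrelations} with $v$-binomials (where $v=-q^{-1}$), because
$$\begin{bmatrix} a+b\\ a\end{bmatrix}_v=(-1)^{ab}\begin{bmatrix} a+b\\ a\end{bmatrix}_q.$$

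Your sketch of the underlying CKM argument is, at a high level, a reasonable outline of how that result is proved (relation-checking for well-definedness, essential surjectivity by construction, fullness from Morrison's theorem, faithfulness by reduction to ladder webs and a dimension count via skew Howe duality, then pivotality), but none of that is carried out in this paper. Moreover, the one step the paper \emph{does} argue is exactly the step you wave away. You write that the different normalizations ``only change all of these by overall powers of $-q$, so they do not affect well-definedness.'' That is too quick: rescaling $M_{a,b}$ by $(-q)^{-ab}$ changes, for instance, the coefficient in the parallel digon relation from $\begin{bmatrix} a+b\\ a\end{bmatrix}_q$ to $(-1)^{ab}\begin{bmatrix} a+b\\ a\end{bmatrix}_q$. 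The relations of Definition~\ref{def:webrelations} are correct for the renormalized intertwiners precisely \emph{because} they are stated with $v$-binomials, which build in these signs. Making that sign bookkeeping explicit is the one piece of original argument the paper contributes to this theorem, and it is absent from your proposal.
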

\noindent Since I have changed the normalization of the intertwiners, 
the web relations in Definition~\ref{def:webrelations} gain certain 
minus signs if the quantum parameter is taken to be $q$, 
as the reader can easily check. When we pass from 
$q$ to $v$, these minus signs get absorbed by the $v$-binomial coefficients 
because  
$$\begin{bmatrix} a+b\\ a\end{bmatrix}_v=(-1)^{ab}\begin{bmatrix} a+b\\ 
a\end{bmatrix}_q.$$


\subsection{Quantum skew Howe duality}
\label{sec:Howe}
Let us briefly recall the instance of quantum skew Howe duality 
which was categorified in~\cite{my}. For more details see~\cite{ckm} and the 
references therein. 
\vskip0.5cm
As explained in~\cite{ckm}, there are $\C(q)$-linear isomorphisms
$$
\Lambda_q^{\bullet}(\C_q^m)^{\otimes N}\cong
\Lambda_q^{\bullet}(\C_q^{m}\otimes \C_q^{N})\cong 
\Lambda_q^{\bullet}(\C_q^N)^{\otimes m}.
$$
The commuting $\U{m}$ and $\U{N}$-actions on 
$\Lambda_q^{\bullet}(\C_q^{m}\otimes \C_q^{N})$ induce 
a $\U{N}$-action on $\Lambda_q^{\bullet}(\C_q^m)^{\otimes N}$, which 
commutes with the standard $\U{m}$-action, and 
a $\U{m}$-action on $\Lambda_q^{\bullet}(\C_q^N)^{\otimes m}$, which 
commutes with the standard $\U{N}$-action. The explicit 
definition of the ``non-standard'' actions is not 
immediately obvious and was worked out 
in~\cite{ckm}. 

\begin{remark}
\label{rem:unfortunately}
Unfortunately, the normalization of the 
intertwiners associated to the generating webs 
in~\cite{ckm} is not convenient for the purposes in this paper. 
For categorification 
we want all the coefficients in the web relations 
(for the webs without tags) and in 
the expressions for the generating intertwiners to be 
quantum natural numbers. Additionally, the top coefficient 
should always be equal to one for certain intertwiners if want we want a 
precise match between the dual canonical basis elements in $W_{\Lambda}$ 
and the indecomposable objects in $\mathcal{W}_{\Lambda}^p$. 
This is impossible if one insists on 
having the same quantum parameter $q$ for both $\U{m}$ and $\U{N}$. 
Therefore I define commuting 
$\Uv{m}$ and $\U{N}$-actions on 
$\Lambda_q^{\bullet}(\C_q^N)^{\otimes m}$ in this paper. As already remarked in 
the introduction, this is consistent with the use of 
$v=-q^{-1}$ in~\cite{fkk,kk}. 

The confusing part is that $\Lambda_q^{\bullet}(\C_q^N)^{\otimes m}$ is 
seen as a $\C(v)$-linear space in this approach. 
Perhaps the right way to think about it, is that both 
$\Lambda_v^{\bullet}(\C_v^m)^{\otimes N}$ and $\Lambda_q^{\bullet}(\C_q^N)^{\otimes m}$ 
as $\C(v)$-linear vector spaces are isomorphic to a direct summand of the 
$\C(v)$-linear Fock space generated by 
column-strict tableaux in~\cite{ug}. 
For each such tableau $T$, both the tensors $x_T$ and $x^T$ then become 
identified with the same ``abstract'' basis vector of the Fock space and 
they are no longer representatives of equivalence classes of 
tensors in exterior powers with different quantum parameters. 
On this Fock space there are two commuting actions of $\Uv{m}$ and 
$\dot{\mathbf{U}}_{-v^{-1}}(\mathfrak{sl}_N)$.\footnote{I thank Ben Webster for 
explaining this to me.} In fact, this 
Fock space is also used in~\cite{bk} for the $\Uv{m}$-side of the story.  
\end{remark}
    
\vskip0.5cm
Let $m,d,N$ be arbitrary non-negative integers. Define 
$$\Lambda(m,d):=\{\vec{k}\in \N^m\mid k_1+\cdots+k_m=d\}$$
and 
$$\Lambda(m,d)_N:=\{\vec{k}\in \Lambda(m,d)\mid 0\leq k_i\leq N\quad 
\text{for all}\; i=1,\ldots,m\}.$$

Given a ${\mathbf U}_v(\mathfrak{sl}_m)$-weight 
$\lambda=(\lambda_1,\ldots,\lambda_{m-1})$, 
the isomorphism in~\eqref{eq:isolattices} shows 
that in general there is not a unique way to lift 
$\lambda$ to a ${\mathbf U}_q(\mathfrak{gl}_m)$-weight. 
However, for a fixed value of $d\in\N$ there exists a partially 
defined map  
$$\phi_{m,d,N}\colon \Z^{m-1}\to \Lambda(m,d)_N$$ 
determined by 
$$\phi_{m,d,N}(\lambda)=\vec{k}$$
such that 
\begin{eqnarray}
k_i&\in&\{0,\ldots,N\}\quad\text{for all}\; i=1,\ldots,m\\
\label{eq:sl-gl-wts1}
k_i-k_{i+1}&=&\lambda_i\quad\text{for all}\; i=1,\ldots,m,\\
\label{eq:sl-gl-wts2}
\qquad \sum_{i=1}^{m}k_i&=&d.
\end{eqnarray}
Note that $\vec{k}$ might not exist. But if it does, it is necessarily unique.  
If it does not exist, put $\phi_{m,d,N}(\lambda)=*$ for convenience. For 
more information on this map in relation with quantum Schur algebras 
see~\cite{msv}.
\vskip0.5cm
The following proposition follows from the results in Sections 4 and 5 
in~\cite{ckm}.
\begin{proposition}[Cautis-Kamnitzer-Morrison]
\label{prop:CKM}
The $\C(v)$-linear functor 
$$\gamma_{m,d,N}\colon \Uv{m}\to \mathcal{S}p(\mathrm{SL}_N)$$ determined by 
\begin{eqnarray*}
1_{\lambda}&\mapsto& 
\begin{cases}
\hspace{1cm}\txt{\unitlength 0.1in
\begin{picture}(  6.000,  10.5000)(  -7.5000, -10.50000)
\put(-9.250000,-7.750000){\makebox(0,0){${}_{k_{m}}$}}%
\put(-6.2750000,-7.750000){\makebox(0,0){${}_{k_{m-1}}$}}%
\put(-1.250000,-7.750000){\makebox(0,0){${}_{k_{2}}$}}%
\put(1.750000,-7.750000){\makebox(0,0){${}_{k_{1}}$}}%
\special{pn 8}%
\special{pa -850 875}%
\special{pa -850 325}%
\special{fp}%
\special{sh 1}%
\special{pa -850 325}%
\special{pa -870 393}%
\special{pa -850 379}%
\special{pa -830 393}%
\special{pa -850 325}%
\special{fp}%
\special{pn 8}%
\special{pa -500 875}%
\special{pa -500 325}%
\special{fp}%
\special{sh 1}%
\special{pa -500 325}%
\special{pa -520 393}%
\special{pa -500 379}%
\special{pa -480 393}%
\special{pa -500 325}%
\special{fp}%
\special{pn 8}%
\special{pa 250 875}%
\special{pa 250 325}%
\special{fp}%
\special{sh 1}%
\special{pa 250 325}%
\special{pa 230 393}%
\special{pa 250 379}%
\special{pa 270 393}%
\special{pa 250 325}%
\special{fp}%
\special{pn 8}%
\special{pa -50 875}%
\special{pa -50 325}%
\special{fp}%
\special{sh 1}%
\special{pa -50 325}%
\special{pa -70 393}%
\special{pa -50 379}%
\special{pa -30 393}%
\special{pa -50 325}%
\special{fp}%
\special{pn 4}%
\special{sh 1}%
\special{ar -150 600 10 10 0  6.28318530717959E+0000}%
\special{pn 4}%
\special{sh 1}%
\special{ar -250 600 10 10 0  6.28318530717959E+0000}%
\special{pn 4}%
\special{sh 1}%
\special{ar -350 600 10 10 0  6.28318530717959E+0000}%
\end{picture}
\phi_{m,d,N}(\lambda)=\vec{k};\\
0,&\hspace{1cm}\text{if}\;\phi_{m,d,N}(\lambda)=*.
\end{cases}
\\[0.5em]
E_{+i}1_{\lambda}&\mapsto& \txt{\unitlength 0.1in
\begin{picture}(  26.000,  10.5000)(  -7.5000, -10.50000)
\put(-5.750000,-7.750000){\makebox(0,0){${}_{k_{m}}$}}%
\put(-2.250000,-7.750000){\makebox(0,0){${}_{k_{i+2}}$}}%
\put(2.0000,-4.250000){\makebox(0,0){${}_{k_{i+1}-1}$}}%
\put(10.250000,-4.250000){\makebox(0,0){${}_{k_{i}+1}$}}%
\put(14.750000,-7.750000){\makebox(0,0){${}_{k_{i-1}}$}}%
\put(18.250000,-7.750000){\makebox(0,0){${}_{k_{1}}$}}%
\put(2.750000,-7.750000){\makebox(0,0){${}_{k_{i+1}}$}}%
\put(9.50000,-7.750000){\makebox(0,0){${}_{k_{i}}$}}%
\put(6.0000,-5.000000){\makebox(0,0){${}_{1}$}}%
\special{pn 8}%
\special{pa -500 875}%
\special{pa -500 325}%
\special{fp}%
\special{sh 1}%
\special{pa -500 325}%
\special{pa -520 393}%
\special{pa -500 379}%
\special{pa -480 393}%
\special{pa -500 325}%
\special{fp}%
\special{pn 8}%
\special{pa -100 875}%
\special{pa -100 325}%
\special{fp}%
\special{sh 1}%
\special{pa -100 325}%
\special{pa -120 393}%
\special{pa -100 379}%
\special{pa -80 393}%
\special{pa -100 325}%
\special{fp}%
\special{pn 8}%
\special{pa 400 875}%
\special{pa 400 325}%
\special{fp}%
\special{sh 1}%
\special{pa 400 325}%
\special{pa 380 393}%
\special{pa 400 379}%
\special{pa 420 393}%
\special{pa 400 325}%
\special{fp}%
\special{pn 8}%
\special{pa 800 875}%
\special{pa 800 325}%
\special{fp}%
\special{sh 1}%
\special{pa 800 325}%
\special{pa 780 393}%
\special{pa 800 379}%
\special{pa 820 393}%
\special{pa 800 325}%
\special{fp}%
\special{pn 8}%
\special{pa 1300 875}%
\special{pa 1300 325}%
\special{fp}%
\special{sh 1}%
\special{pa 1300 325}%
\special{pa 1280 393}%
\special{pa 1300 379}%
\special{pa 1320 393}%
\special{pa 1300 325}%
\special{fp}%
\special{pn 8}%
\special{pa 1700 875}%
\special{pa 1700 325}%
\special{fp}%
\special{sh 1}%
\special{pa 1700 325}%
\special{pa 1680 393}%
\special{pa 1700 379}%
\special{pa 1720 393}%
\special{pa 1700 325}%
\special{fp}%
\special{pn 8}%
\special{pa 400 650}%
\special{pa 800 550}%
\special{fp}%
\special{sh 1}%
\special{pa 800 550}%
\special{pa 730 548}%
\special{pa 748 564}%
\special{pa 740 586}%
\special{pa 800 550}%
\special{fp}%
\special{pn 4}%
\special{sh 1}%
\special{ar 1400 600 10 10 0  6.28318530717959E+0000}%
\special{pn 4}%
\special{sh 1}%
\special{ar 1500 600 10 10 0  6.28318530717959E+0000}%
\special{pn 4}%
\special{sh 1}%
\special{ar 1600 600 10 10 0  6.28318530717959E+0000}%
\special{pn 4}%
\special{sh 1}%
\special{ar -200 600 10 10 0  6.28318530717959E+0000}%
\special{pn 4}%
\special{sh 1}%
\special{ar -300 600 10 10 0  6.28318530717959E+0000}%
\special{pn 4}%
\special{sh 1}%
\special{ar -400 600 10 10 0  6.28318530717959E+0000}%
\end{picture}
E_{-i}1_{\lambda}&\mapsto& \txt{\unitlength 0.1in
\begin{picture}(  26.000,  10.5000)(  -7.5000, -10.50000)
\put(-5.750000,-7.750000){\makebox(0,0){${}_{k_{m}}$}}%
\put(-2.250000,-7.750000){\makebox(0,0){${}_{k_{i+2}}$}}%
\put(2.0000,-4.250000){\makebox(0,0){${}_{k_{i+1}+1}$}}%
\put(10.250000,-4.250000){\makebox(0,0){${}_{k_{i}-1}$}}%
\put(14.750000,-7.750000){\makebox(0,0){${}_{k_{i-1}}$}}%
\put(18.250000,-7.750000){\makebox(0,0){${}_{k_{1}}$}}%
\put(2.750000,-7.750000){\makebox(0,0){${}_{k_{i+1}}$}}%
\put(9.0000,-7.750000){\makebox(0,0){${}_{k_{i}}$}}%
\put(6.0000,-5.000000){\makebox(0,0){${}_{1}$}}%
\special{pn 8}%
\special{pa -500 875}%
\special{pa -500 325}%
\special{fp}%
\special{sh 1}%
\special{pa -500 325}%
\special{pa -520 393}%
\special{pa -500 379}%
\special{pa -480 393}%
\special{pa -500 325}%
\special{fp}%
\special{pn 8}%
\special{pa -100 875}%
\special{pa -100 325}%
\special{fp}%
\special{sh 1}%
\special{pa -100 325}%
\special{pa -120 393}%
\special{pa -100 379}%
\special{pa -80 393}%
\special{pa -100 325}%
\special{fp}%
\special{pn 8}%
\special{pa 400 875}%
\special{pa 400 325}%
\special{fp}%
\special{sh 1}%
\special{pa 400 325}%
\special{pa 380 393}%
\special{pa 400 379}%
\special{pa 420 393}%
\special{pa 400 325}%
\special{fp}%
\special{pn 8}%
\special{pa 800 875}%
\special{pa 800 325}%
\special{fp}%
\special{sh 1}%
\special{pa 800 325}%
\special{pa 780 393}%
\special{pa 800 379}%
\special{pa 820 393}%
\special{pa 800 325}%
\special{fp}%
\special{pn 8}%
\special{pa 1300 875}%
\special{pa 1300 325}%
\special{fp}%
\special{sh 1}%
\special{pa 1300 325}%
\special{pa 1280 393}%
\special{pa 1300 379}%
\special{pa 1320 393}%
\special{pa 1300 325}%
\special{fp}%
\special{pn 8}%
\special{pa 1700 875}%
\special{pa 1700 325}%
\special{fp}%
\special{sh 1}%
\special{pa 1700 325}%
\special{pa 1680 393}%
\special{pa 1700 379}%
\special{pa 1720 393}%
\special{pa 1700 325}%
\special{fp}%
\special{pn 8}%
\special{pa 800 650}%
\special{pa 400 550}%
\special{fp}%
\special{sh 1}%
\special{pa 400 550}%
\special{pa 460 586}%
\special{pa 452 564}%
\special{pa 470 548}%
\special{pa 400 550}%
\special{fp}%
\special{pn 4}%
\special{sh 1}%
\special{ar 1400 600 10 10 0  6.28318530717959E+0000}%
\special{pn 4}%
\special{sh 1}%
\special{ar 1500 600 10 10 0  6.28318530717959E+0000}%
\special{pn 4}%
\special{sh 1}%
\special{ar 1600 600 10 10 0  6.28318530717959E+0000}%
\special{pn 4}%
\special{sh 1}%
\special{ar -200 600 10 10 0  6.28318530717959E+0000}%
\special{pn 4}%
\special{sh 1}%
\special{ar -300 600 10 10 0  6.28318530717959E+0000}%
\special{pn 4}%
\special{sh 1}%
\special{ar -400 600 10 10 0  6.28318530717959E+0000}%
\end{picture}
\end{eqnarray*} 
is well-defined and full. 
\end{proposition}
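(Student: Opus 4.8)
The plan is to deduce both assertions from the corresponding statements proved by Cautis, Kamnitzer and Morrison in Sections~4 and~5 of~\cite{ckm}, keeping careful track of the renormalization of the generating webs fixed in Section~\ref{sec:spider} and of the passage to the quantum parameter $v=-q^{-1}$. They prove the analogue of this proposition with the parameter $q$ and with their own normalization of the generating webs, so what remains is only to check that neither change breaks well-definedness or fullness. The key point, already recorded after Theorem~\ref{thm:ckm}, is that $\begin{bmatrix}a+b\\a\end{bmatrix}_v=(-1)^{ab}\begin{bmatrix}a+b\\a\end{bmatrix}_q$, so that the signs introduced in Definition~\ref{def:webrelations} by our renormalization are exactly absorbed by the $v$-binomial coefficients.

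For well-definedness I would verify that $\gamma_{m,d,N}$ respects the defining relations of $\Uv{m}$. If $\phi_{m,d,N}(\lambda)=*$ for a weight occurring in a relation, then $1_{\lambda}\mapsto 0$ and the relation is sent to $0=0$, so I may assume all relevant idempotents are sent to ladder objects. The relations $1_{\lambda}1_{\mu}=\delta_{\lambda,\mu}1_{\lambda}$, $E_{\pm i}1_{\lambda}=1_{\lambda\pm i'}E_{\pm i}$ and $K_i1_{\lambda}=v^{\lambda_i}1_{\lambda}$ are immediate from the pictures, since the rungs are attached to strands carrying the appropriate $\mathfrak{gl}_m$-weights and $1_{\lambda}$ already records the weight, and the far-commutativity $E_{\pm i}E_{\pm j}=E_{\pm j}E_{\pm i}$ for $|i-j|>1$ holds by planar isotopy of disjoint rungs. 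The two substantive families are the mixed relation and the Serre relations. Under $\gamma_{m,d,N}$ the relation $E_{+i}E_{-j}-E_{-j}E_{+i}=\delta_{i,j}(K_i-K_i^{-1})/(v-v^{-1})$ becomes an identity between the two ways of stacking an $E_{-j}$-rung and an $E_{+i}$-rung on the strands in positions $i,i+1$: for $i\ne j$ the two rungs commute by isotopy, while for $i=j$ the two stackings form a ``square'' which the relations~\eqref{eq:parallelsquare} and~\eqref{eq:oppositesquare} resolve into $[\lambda_i]_v=(v^{\lambda_i}-v^{-\lambda_i})/(v-v^{-1})$ copies of the vertical identity web and nothing else, matching the right-hand side on the nose. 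The quantum Serre relation $E_{\pm i}^2E_{\pm j}-[2]_vE_{\pm i}E_{\pm j}E_{\pm i}+E_{\pm j}E_{\pm i}^2=0$ for $|i-j|=1$ becomes a ladder-web identity which is a consequence of the digon and associativity relations~\eqref{eq:paralleldigon},~\eqref{eq:oppositedigon},~\eqref{eq:associativity}; this is exactly the step where the structure constants must be genuine $v$-integers and $v$-binomials, which they are precisely because we work with $v$ rather than $q$. All of these web identities are the ones established in Section~5 of~\cite{ckm}.

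For fullness I would fix $\lambda,\mu\in\Z^{m-1}$ and show that the induced map $\Hom_{\Uv{m}}(\mu,\lambda)\to\Hom_{\mathcal{S}p(\mathrm{SL}_N)}(\gamma_{m,d,N}(\mu),\gamma_{m,d,N}(\lambda))$ is surjective. If $\phi_{m,d,N}(\lambda)=*$ or $\phi_{m,d,N}(\mu)=*$ the target is zero and there is nothing to prove; otherwise $\gamma_{m,d,N}(\lambda),\gamma_{m,d,N}(\mu)$ are the ladder objects attached to $\vec{k},\vec{l}\in\Lambda(m,d)_N$. By Theorem~\ref{thm:ckm} any web $f$ between them is a $\C(v)$-linear combination of webs built from merge, split, tag, cup and cap generators, and the reduction algorithm of Section~5 of~\cite{ckm}, using planar isotopy together with the relations of Definition~\ref{def:webrelations}, rewrites each summand modulo those relations as a vertical composition of elementary rungs, each a unit multiple of the image under $\gamma_{m,d,N}$ of some $E_{\pm i}1_{\lambda'}$. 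Hence $f$ lies in the image of $\gamma_{m,d,N}$. The renormalization and the substitution $v=-q^{-1}$ merely rescale the rungs by units of $\C(v)$, so they do not affect this argument.

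The step I expect to be the main obstacle is the bookkeeping in the well-definedness part: I must confirm that the square and digon relations of Definition~\ref{def:webrelations}, specialized to the ladder configurations coming from $E_{+i}E_{-j}$, $E_{-j}E_{+i}$ and the Serre triples, reproduce the relations of $\Uv{m}$ \emph{exactly}, with coefficients $(v^{\lambda_i}-v^{-\lambda_i})/(v-v^{-1})$ and $[2]_v$ and no residual signs, which is where the change of parameter has to be used in precisely the right way. Fullness, by contrast, is comparatively soft, being a direct consequence of the ladder-generation result of~\cite{ckm}. I would also point out that no faithfulness is claimed, nor could it hold: $W_{\Lambda}$ is finite-dimensional whereas the hom-spaces $\Hom_{\Uv{m}}(\mu,\lambda)$ need not be, so $\gamma_{m,d,N}$ necessarily has a large kernel.
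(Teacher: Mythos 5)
The paper does not actually prove Proposition~\ref{prop:CKM}: it simply states that it ``follows from the results in Sections 4 and 5 in~\cite{ckm},'' so your task was essentially to unpack that citation while tracking the renormalization of the generating intertwiners and the substitution $v=-q^{-1}$. You have done exactly that, and the argument is correct: well-definedness reduces to checking that the ladder images of the defining relations of $\Uv{m}$ are consequences of the square, digon and associativity relations of Definition~\ref{def:webrelations} (with the sign bookkeeping absorbed by $\begin{bmatrix}a+b\\a\end{bmatrix}_v=(-1)^{ab}\begin{bmatrix}a+b\\a\end{bmatrix}_q$), and fullness reduces to the ladder-generation result in Section~5 of~\cite{ckm}, both of which you invoke appropriately. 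One small imprecision: you list $K_i1_{\lambda}=v^{\lambda_i}1_{\lambda}$ among the relations to verify, but $K_i$ is not a generator of the idempotented algebra $\Uv{m}$ -- that identity is built into the passage from ${\mathbf U}_q(\mathfrak{sl}_m)$ to $\U{m}$ and there is nothing for the functor to check there; this does not affect the substance of the argument. Your closing remark that the functor cannot be faithful (finite-dimensional target, infinite-dimensional hom-spaces in $\Uv{m}$) is also correct and worth keeping in mind.
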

By~\eqref{eq:paralleldigon} and~\eqref{eq:associativity}, 
it is easy to determine the images of the divided powers 
$$E_{+i}^{(a)}:=E_{+i}^a/[a]!\quad\text{and}\quad E_{-i}^{(a)}=E_{-i}^a/[a]!$$
\begin{eqnarray*}
E_{+i}^{(a)}1_{\lambda}&\mapsto& \txt{\unitlength 0.1in
\begin{picture}(  26.000,  10.5000)(  -7.5000, -10.50000)
\put(-5.750000,-7.750000){\makebox(0,0){${}_{k_{m}}$}}%
\put(-2.250000,-7.750000){\makebox(0,0){${}_{k_{i+2}}$}}%
\put(2.0000,-4.250000){\makebox(0,0){${}_{k_{i+1}-a}$}}%
\put(10.250000,-4.250000){\makebox(0,0){${}_{k_{i}+a}$}}%
\put(14.750000,-7.750000){\makebox(0,0){${}_{k_{i-1}}$}}%
\put(18.250000,-7.750000){\makebox(0,0){${}_{k_{1}}$}}%
\put(2.750000,-7.750000){\makebox(0,0){${}_{k_{i+1}}$}}%
\put(9.50000,-7.750000){\makebox(0,0){${}_{k_{i}}$}}%
\put(6.0000,-5.000000){\makebox(0,0){${}_{a}$}}%
\special{pn 8}%
\special{pa -500 875}%
\special{pa -500 325}%
\special{fp}%
\special{sh 1}%
\special{pa -500 325}%
\special{pa -520 393}%
\special{pa -500 379}%
\special{pa -480 393}%
\special{pa -500 325}%
\special{fp}%
\special{pn 8}%
\special{pa -100 875}%
\special{pa -100 325}%
\special{fp}%
\special{sh 1}%
\special{pa -100 325}%
\special{pa -120 393}%
\special{pa -100 379}%
\special{pa -80 393}%
\special{pa -100 325}%
\special{fp}%
\special{pn 8}%
\special{pa 400 875}%
\special{pa 400 325}%
\special{fp}%
\special{sh 1}%
\special{pa 400 325}%
\special{pa 380 393}%
\special{pa 400 379}%
\special{pa 420 393}%
\special{pa 400 325}%
\special{fp}%
\special{pn 8}%
\special{pa 800 875}%
\special{pa 800 325}%
\special{fp}%
\special{sh 1}%
\special{pa 800 325}%
\special{pa 780 393}%
\special{pa 800 379}%
\special{pa 820 393}%
\special{pa 800 325}%
\special{fp}%
\special{pn 8}%
\special{pa 1300 875}%
\special{pa 1300 325}%
\special{fp}%
\special{sh 1}%
\special{pa 1300 325}%
\special{pa 1280 393}%
\special{pa 1300 379}%
\special{pa 1320 393}%
\special{pa 1300 325}%
\special{fp}%
\special{pn 8}%
\special{pa 1700 875}%
\special{pa 1700 325}%
\special{fp}%
\special{sh 1}%
\special{pa 1700 325}%
\special{pa 1680 393}%
\special{pa 1700 379}%
\special{pa 1720 393}%
\special{pa 1700 325}%
\special{fp}%
\special{pn 8}%
\special{pa 400 650}%
\special{pa 800 550}%
\special{fp}%
\special{sh 1}%
\special{pa 800 550}%
\special{pa 730 548}%
\special{pa 748 564}%
\special{pa 740 586}%
\special{pa 800 550}%
\special{fp}%
\special{pn 4}%
\special{sh 1}%
\special{ar 1400 600 10 10 0  6.28318530717959E+0000}%
\special{pn 4}%
\special{sh 1}%
\special{ar 1500 600 10 10 0  6.28318530717959E+0000}%
\special{pn 4}%
\special{sh 1}%
\special{ar 1600 600 10 10 0  6.28318530717959E+0000}%
\special{pn 4}%
\special{sh 1}%
\special{ar -200 600 10 10 0  6.28318530717959E+0000}%
\special{pn 4}%
\special{sh 1}%
\special{ar -300 600 10 10 0  6.28318530717959E+0000}%
\special{pn 4}%
\special{sh 1}%
\special{ar -400 600 10 10 0  6.28318530717959E+0000}%
\end{picture}
E_{-i}^{(a)}1_{\lambda}&\mapsto& \txt{\unitlength 0.1in
\begin{picture}(  26.000,  10.5000)(  -7.5000, -10.50000)
\put(-5.750000,-7.750000){\makebox(0,0){${}_{k_{m}}$}}%
\put(-2.250000,-7.750000){\makebox(0,0){${}_{k_{i+2}}$}}%
\put(2.0000,-4.250000){\makebox(0,0){${}_{k_{i+1}+a}$}}%
\put(10.250000,-4.250000){\makebox(0,0){${}_{k_{i}-a}$}}%
\put(14.750000,-7.750000){\makebox(0,0){${}_{k_{i-1}}$}}%
\put(18.250000,-7.750000){\makebox(0,0){${}_{k_{1}}$}}%
\put(2.750000,-7.750000){\makebox(0,0){${}_{k_{i+1}}$}}%
\put(9.0000,-7.750000){\makebox(0,0){${}_{k_{i}}$}}%
\put(6.0000,-5.000000){\makebox(0,0){${}_{a}$}}%
\special{pn 8}%
\special{pa -500 875}%
\special{pa -500 325}%
\special{fp}%
\special{sh 1}%
\special{pa -500 325}%
\special{pa -520 393}%
\special{pa -500 379}%
\special{pa -480 393}%
\special{pa -500 325}%
\special{fp}%
\special{pn 8}%
\special{pa -100 875}%
\special{pa -100 325}%
\special{fp}%
\special{sh 1}%
\special{pa -100 325}%
\special{pa -120 393}%
\special{pa -100 379}%
\special{pa -80 393}%
\special{pa -100 325}%
\special{fp}%
\special{pn 8}%
\special{pa 400 875}%
\special{pa 400 325}%
\special{fp}%
\special{sh 1}%
\special{pa 400 325}%
\special{pa 380 393}%
\special{pa 400 379}%
\special{pa 420 393}%
\special{pa 400 325}%
\special{fp}%
\special{pn 8}%
\special{pa 800 875}%
\special{pa 800 325}%
\special{fp}%
\special{sh 1}%
\special{pa 800 325}%
\special{pa 780 393}%
\special{pa 800 379}%
\special{pa 820 393}%
\special{pa 800 325}%
\special{fp}%
\special{pn 8}%
\special{pa 1300 875}%
\special{pa 1300 325}%
\special{fp}%
\special{sh 1}%
\special{pa 1300 325}%
\special{pa 1280 393}%
\special{pa 1300 379}%
\special{pa 1320 393}%
\special{pa 1300 325}%
\special{fp}%
\special{pn 8}%
\special{pa 1700 875}%
\special{pa 1700 325}%
\special{fp}%
\special{sh 1}%
\special{pa 1700 325}%
\special{pa 1680 393}%
\special{pa 1700 379}%
\special{pa 1720 393}%
\special{pa 1700 325}%
\special{fp}%
\special{pn 8}%
\special{pa 800 650}%
\special{pa 400 550}%
\special{fp}%
\special{sh 1}%
\special{pa 400 550}%
\special{pa 460 586}%
\special{pa 452 564}%
\special{pa 470 548}%
\special{pa 400 550}%
\special{fp}%
\special{pn 4}%
\special{sh 1}%
\special{ar 1400 600 10 10 0  6.28318530717959E+0000}%
\special{pn 4}%
\special{sh 1}%
\special{ar 1500 600 10 10 0  6.28318530717959E+0000}%
\special{pn 4}%
\special{sh 1}%
\special{ar 1600 600 10 10 0  6.28318530717959E+0000}%
\special{pn 4}%
\special{sh 1}%
\special{ar -200 600 10 10 0  6.28318530717959E+0000}%
\special{pn 4}%
\special{sh 1}%
\special{ar -300 600 10 10 0  6.28318530717959E+0000}%
\special{pn 4}%
\special{sh 1}%
\special{ar -400 600 10 10 0  6.28318530717959E+0000}%
\end{picture}
\end{eqnarray*} 
\vskip0.5cm
The images of products of divided powers in $\Uv{m}$ are special web diagrams, 
called {\em ladders} (see Section 5 in~\cite{ckm}). 
\begin{definition}[Cautis-Kamnitzer-Morrison]
\label{def:ladders}
An {\em $N$-ladder with $m$ uprights} is a rectangular $\mathfrak{sl}_N$-web 
diagram without tags, such that  
\begin{itemize}
\item its vertical edges are all oriented upwards and lie on $m$ 
parallel vertical lines running from bottom to top; 
\item it contains a certain number of horizontal oriented {\em rungs} 
connecting adjacent uprights.
\end{itemize} 
\end{definition}
\noindent Since ladders do not have tags, at each trivalent vertex 
the sum of the labels of the incoming edges has to be equal to the sum of 
the labels of the outgoing edges. 
\vskip0.5cm
By composing the functor $\Uv{m}\to \mathcal{S}p(\mathrm{SL}_N)$ 
in Proposition~\ref{prop:CKM} with  
$\Gamma_N\colon \mathcal{S}p(\mathrm{SL}_N)\to 
\mathcal{R}ep(\mathrm{SL}_N)$, we get a well-defined action of $\Uv{m}$ 
on $\Lambda_q^{\bullet}(\C_q^N)^{\otimes m}$. For any $X\in \Uv{m}$ and 
$x_{\vec{\nu}}\in 
\Lambda_q^{\bullet}(\C_q^N)^{\otimes m}$, first map $X$ to a 
ladder in $\mathcal{S}p(\mathrm{SL}_N)$ and then apply the corresponding 
intertwiner in $\mathcal{R}ep(\mathrm{SL}_N)$ to $x_{\vec{\nu}}$ if possible. If 
the domain of the intertwiner does not match the tensor type of $x_{\vec{\nu}}$, 
we declare the action to be zero. Since the 
$\Uv{m}$-action is defined by intertwiners, it commutes with the standard 
$\U{N}$-action on $\Lambda_q^{\bullet}(\C_q^N)^{\otimes m}$.

\vskip0.5cm
{\em For the rest of this paper, let $N\geq 2 $ and $m,\ell\geq 0$ be 
arbitrary but fixed integers such that $m=N\ell$. The extra parameter $d$ 
in Proposition~\ref{prop:CKM} is always taken to be equal to $m$}. 
\vskip0.5cm
The standard action of $\Uv{m}$ on 
$\Lambda^{\ell}_v(\C_v^m)^{\otimes N}$ is easy to describe in terms of tableaux. 
Choose $T\in \mathrm{Col}^{(N^{\ell})}$ and $i\in \{1,\ldots,m-1\}$. Then 
$$E_{-i}x_T=\sum_{T'} v^{-a_{T,T'}}x_{T'}$$
where the sum is over all tableaux $T'\in \mathrm{Col}^{(N^{\ell})}$ obtained from 
$T$ by changing one $i$ into $i+1$. Suppose that $T'$ is such 
a tableau and that it was obtained by changing the entry $(k,l)$ of $T$, then 
$$a_{T,T'}=\vert\{(k',l')\mid l'>l,\; T_{(k',l')}=i\}\vert-\vert
\{(k',l')\mid l'>l,\; T_{(k',l')}=i+1\}\vert.$$

The action of $E_i$ on $x_T$ can be described similarly:
$$E_ix_T=\sum_{T''} v^{b_{T,T''}} x_{T''}$$
where the sum is taken over all tableaux in $T''\in \mathrm{Col}^{(N^{\ell})}$ 
obtained from $T$ by changing one $i+1$ into $i$. Suppose that $T''$ is such 
a tableau and that it was obtained by changing the entry $(k,l)$ of $T$, then 
$$b_{T,T''}=\vert\{(k',l')\mid l'<l,\; T_{(k',l')}=i\}\vert-\vert
\{(k',l')\mid l'<l,\; T_{(k',l')}=i+1\}\vert.$$  
  
\begin{proposition}
\label{prop:vqHowe}
The action of $\Uv{m}$ on 
$\bigoplus_{\vec{k}\in \Lambda(m,m)_N}\Lambda_q^{\vec{k}}(\C_q^N)^{\otimes m}$ 
is given by 
$$E_{-i}x^T=\sum_{T'} v^{-a_{T,T'}}x^{T'}\quad E_ix^T=\sum_{T''} v^{b_{T,T''}} x^{T''},$$
for any $T\in \mathrm{Col}^{(N^{\ell})}$. Here $T',T'',a_{T,T'},b_{T,T''}$ are as 
above. 
\end{proposition}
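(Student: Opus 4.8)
The plan is to reduce the statement to the two Chevalley generators $E_{\pm i}$, to identify their action by feeding the single-rung ladders of Proposition~\ref{prop:CKM} through the functor $\Gamma_N$ of Theorem~\ref{thm:ckm}, and then to match the resulting powers of $v$ with the combinatorial data $a_{T,T'}$ and $b_{T,T''}$ defined just above the statement.

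First I would use that the $\Uv{m}$-action in question is, by construction, obtained by applying $\Gamma_N\circ\gamma_{m,m,N}$, so it is determined once we know the images of $1_\lambda$, $E_{+i}1_\lambda$ and $E_{-i}1_\lambda$; since the image of $1_\lambda$ is the projection onto the summand labelled by $\vec{k}=\phi_{m,m,N}(\lambda)$ (or $0$ if $\phi_{m,m,N}(\lambda)=*$), only the generators need attention. By Proposition~\ref{prop:CKM} the web $\gamma_{m,m,N}(E_{+i}1_\lambda)$ is the ladder with a single rung of colour $1$ running from the $(i+1)$-st upright to the $i$-th, and by the definition of $\Gamma_N$ on the generating webs of Definition~\ref{def:webrelations} this ladder is sent to the intertwiner acting as the identity on every tensor factor except the $(i+1)$-st and $i$-th, and on these two as the composite: apply $M'_{k_{i+1}-1,1}$ to the $(i+1)$-st factor $\Lambda^{k_{i+1}}_q(\C_q^N)$ (splitting off a new $\Lambda^1_q(\C_q^N)$ factor), then apply $M_{1,k_i}$ to merge that new factor into the $i$-th factor $\Lambda^{k_i}_q(\C_q^N)$. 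Symmetrically, $E_{-i}1_\lambda$ acts by splitting a colour-$1$ box off the $i$-th factor via $M'_{1,k_i-1}$ and merging it into the $(i+1)$-st factor via $M_{k_{i+1},1}$. (These are the single-rung special cases of the ladders in Definition~\ref{def:ladders}; the general images of divided powers, obtained from \eqref{eq:paralleldigon} and \eqref{eq:associativity}, are not needed here.)

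Next I would evaluate these composites on a standard basis tensor $x^T$, whose $(i+1)$-st and $i$-th factors are $x_{\nu^{i+1}}$ and $x_{\nu^i}$ with $\nu^a_b=1\Leftrightarrow a\in T(b)$. Applying \eqref{eq:M'} gives $M'_{k_{i+1}-1,1}(x_{\nu^{i+1}})=\sum_{j\in\nu^{i+1}}v^{-\ell(\nu^{i+1}\setminus\{j\},\{j\})}\,x_{\nu^{i+1}\setminus\{j\}}\otimes x_{\{j\}}$, and then \eqref{eq:M} gives $M_{1,k_i}(x_{\{j\}}\otimes x_{\nu^i})=v^{\ell(\nu^i,\{j\})}x_{\{j\}\cup\nu^i}$ if $j\notin\nu^i$ and $0$ otherwise. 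Under the tableau dictionary the output is $x^{T''}$, where $T''$ arises from $T$ by changing the entry $i+1$ sitting in column $j$ into an $i$; this is column-strict exactly when $j\notin\nu^i$, which matches the vanishing condition of $M_{1,k_i}$. Hence $E_{+i}x^T=\sum_{T''}v^{c_{T,T''}}x^{T''}$, summed over the $T''\in\mathrm{Col}^{(N^\ell)}$ obtained from $T$ by changing one $i+1$ into $i$, with $c_{T,T''}=\ell(\nu^i,\{j\})-\ell(\nu^{i+1}\setminus\{j\},\{j\})$ for $j$ the column of the changed entry. Finally, unwinding the definition of $\ell(\cdot,\cdot)$ and using column-strictness of $T$, one sees that $\ell(\nu^i,\{j\})$ is the number of columns strictly to the left of column $j$ containing the entry $i$ in $T$ and $\ell(\nu^{i+1}\setminus\{j\},\{j\})$ the number of columns strictly to the left of column $j$ containing $i+1$, so that $c_{T,T''}=b_{T,T''}$. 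The case of $E_{-i}$ is entirely parallel, with a box now passing from the $i$-th to the $(i+1)$-st factor and the roles of the left and right slots of $M$ and $M'$ interchanged; the same bookkeeping produces the exponent $-a_{T,T'}$.

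The one point that genuinely requires care is tracking the conventions of Section~\ref{sec:spider}: because the tensor factors of $\Lambda_q^{\vec{k}}(\C_q^N)$ are written in reverse order, one must keep straight whether the travelling box is the left or the right argument of each $M_{a,b}$ and $M'_{a,b}$, and it is exactly this choice that decides whether the resulting exponent counts the columns to the left of column $j$ (as for $E_{+i}$) or to the right (as for $E_{-i}$), and with which sign. It is also worth recording that, because we use the renormalized generating intertwiners together with the parameter $v=-q^{-1}$, no extraneous signs survive: the coefficients come out as honest powers of $v$, and the formulas obtained are literally those of the standard $\Uv{m}$-action on $\Lambda^\ell_v(\C_v^m)^{\otimes N}$ under the identification $x_T\leftrightarrow x^T$, which is the content of this instance of quantum skew Howe duality.
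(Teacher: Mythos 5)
Your proof is correct and follows essentially the same route as the paper's: feed the single-rung ladders of Proposition~\ref{prop:CKM} through $\Gamma_N$, expand the $(M',M)$-composite on a standard tensor $x^T$, and match the resulting $\ell(\cdot,\cdot)$ exponents against $a_{T,T'}$ and $b_{T,T''}$. The paper compresses the middle step into ``a small calculation shows that\ldots'' and writes the answer in terms of the column-number sets $S_k$, while you spell out the $M'_{k_{i+1}-1,1}$, $M_{1,k_i}$ (resp.\ $M'_{1,k_i-1}$, $M_{k_{i+1},1}$) expansions and explicitly note that column-strictness of the resulting tableau coincides with the vanishing condition of $M$; this is the same argument, just with the bookkeeping made explicit.
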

\begin{proof}

We compute the action of $\Uv{m}$ on 
$\bigoplus_{\vec{k}\in \Lambda(m,m)_N}\Lambda_q^{\vec{k}}(\C_q^N)^{\otimes m}$ using 
Proposition~\ref{prop:CKM}. Recall that 
$$x^T=x_{S_m}\otimes \cdots \otimes x_{S_{i+1}}\otimes x_{S_i}\otimes\cdots\otimes x_{S_1},$$ 
where $S_k$ is the set of numbers of the columns in $T$ containing 
a filling equal to $k$. A small calculation shows that 
$$E_{-i} x^T= \sum_{j\in S_i\backslash S_{i+1}} v^{-(\ell(j,S_i\backslash \{j\})-\ell(j,S_{i+1}))}
x_{S_m}\otimes\cdots \otimes x_{S_{i+1}\cup \{j\}}\otimes 
x_{S_i\backslash \{j\}}\otimes\cdots x_{S_1}.$$
It follows immediately from the definitions that 
$$a_{T,T'}=\ell(j,S_i\backslash \{j\})-\ell(j,S_{i+1})$$
if $T'\in \mathrm{Col}^{(N^{\ell})}$ corresponds to 
$x_{S_m}\otimes\cdots \otimes x_{S_{i+1}\cup \{j\}}\otimes 
x_{S_i\backslash \{j\}}\otimes\cdots x_{S_1}$.

Similarly, we have 
$$E_{i} x^T= \sum_{j\in S_{i+1}\backslash S_{i}} v^{\ell(S_i,j)-\ell(S_{i+1}\backslash \{j\},j)}
x_{S_m}\otimes\cdots \otimes x_{S_{i+1}\backslash \{j\}}\otimes 
x_{S_i\cup \{j\}}\otimes\cdots x_{S_1}$$
and 
$$
b_{T,T''}=\ell(S_i,j)-\ell(S_{i+1}\backslash \{j\},j)
$$
if $T''\in \mathrm{Col}^{(N^{\ell})}$ corresponds to 
$x_{S_m}\otimes\cdots \otimes x_{S_{i+1}\backslash \{j\}}\otimes 
x_{S_i\cup \{j\}}\otimes\cdots x_{S_1}$. 
\end{proof}

\vskip0.5cm
As before, let $\Lambda:=N\omega_{\ell}$, where $\omega_{\ell}$ is the 
$\ell$-th fundamental $\mathfrak{sl}_m$-weight.  
Let $V_{\Lambda}$ be the irreducible $\Uv{m}$-module of highest weight 
$\Lambda$. It is well-known that $V_{\Lambda}$ is isomorphic to a 
direct summand of 
$\Lambda_v^{\ell}(\C_v^m)^{\otimes N}$. Therefore, 
$V_{\Lambda}$ is also isomorphic to a direct summand of 
the $\Uv{m}$-module $\Lambda_q^{\bullet}(\C_q^N)^{\otimes m}$. In order 
to see this explicitly, Let $P_{\Lambda}$ be the set of 
$\mathfrak{sl}_m$-weights of $V_{\Lambda}$.  
Note that the restriction of $\phi_{m,m,N}$ to $P_{\Lambda}\subset \Z^{m-1}$
gives a bijection
$$\phi_{m,m,N}\colon P_{\Lambda}\to \Lambda(m,m)_N.$$ 
In particular, we have $\phi_{m,m,N}(\Lambda)=(N^{\ell})$.  

\begin{definition}
\label{def:webmodule}
For any $\vec{k}=(k_1,\ldots,k_m)\in \Lambda(m,m)_N$, define the 
{\em web space} $W(\vec{k},N)$ by 
$$W(\vec{k},N):=\mathrm{Hom}((N^{\ell}),\vec{k})$$
in $\mathcal{S}p(\mathrm{SL}_N)$. Define also the $\Uv{m}$-{\em web module} 
with highest weight $\Lambda$ by 
$$W_{\Lambda}:=\bigoplus_{\vec{k}\in \Lambda(m,m)_N}W(\vec{k},N).$$
\end{definition}

\begin{remark}
Note that $W(\vec{k},N)$ is isomorphic to the space of invariant 
$\U{N}$-tensors in $\Lambda_q^{\vec{k}}(\C_q^N)$. 
\end{remark}

The action of $\Uv{m}$ on $W_{\Lambda}$  
is defined by applying the functor in Proposition~\ref{prop:CKM} 
and glueing the ladders on top of the webs in $W_{\Lambda}$ 
when the $\mathfrak{gl}_m$-weights match. If the $\mathfrak{gl}_m$-weights 
do not match, simply define the action to be zero. 
\vskip0.5cm
For any $u\in W(\vec{k},N)$, 
let $$u^*\in \mathrm{Hom}(\vec{k},(N^{\ell}))$$ be the web obtained via 
reflexion in the $x$-axis and reorientation. Note that $u$ and $u^*$ can 
be glued together such that $u^*u\in \mathrm{End}((N^{\ell}))$. 

Let $w_{\Lambda}$ be the web with $\ell$ vertical $N$-strands. Note that 
$$\mathrm{End}((N^{\ell}))\cong\C(v)w_{\Lambda},$$
so we can define a map 
$$\mathrm{ev}\colon \mathrm{End}((N^{\ell}))\to\C(v).$$ 
In~\cite{my}, the {\em $v$-sesquilinear web form} was 
defined by
\begin{equation}
\label{eq:webbracket}
\langle u, v\rangle:= v^{d(\vec{k})}\mathrm{ev}(u^*v)\in \C(v).
\end{equation}
for any monomial webs $u,v\in W(\vec{k},N)$. The normalization 
factor is defined by  
$$d(\vec{k})=1/2\left(N(N-1)\ell-\sum_{i=1}^{m}k_i(k_i-1)\right).$$
We assume that the web form is $v$-antilinear in the first entry and 
$v$-linear in the second. Note that all web relations 
have coefficients which are symmetric in $v$ and $v^{-1}$, so 
this definition makes sense.

Recall also the $v$-antilinear algebra anti-automorphism  
$\tau$ on $\Uv{m}$ defined by 
\[
\tau(1_{\lambda})=1_{\lambda},\;\;
\tau(1_{\lambda+i'}E_{+i}1_{\lambda})= 
v^{-1-\lambda_i}1_{\lambda}E_{-i}1_{\lambda+i'},\]
\[
\tau(1_{\lambda}E_{-i}1_{\lambda+i'})= v^{1+\lambda_i}
1_{\lambda+i'}E_{+i}1_{\lambda}.
\] 
The $v$-{\em Shapovalov form} $\langle\cdot,\cdot\rangle$ on 
$V_{\Lambda}$ is the unique $v$-sesquilinear form such that 
\begin{enumerate}
\item $\langle v_{\Lambda},v_{\Lambda}\rangle =1$, for a fixed 
highest weight vector $v_{\Lambda}$;
\item $\langle X v, v' \rangle=\langle v,\tau(X) v'\rangle$, for any 
$X\in \Uv{m}$ and any $v,v'\in V_{\Lambda}$.
\end{enumerate}  

In~\cite{my}, the following corollary was proved. 

\begin{corollary}
\label{cor:webirrep}
With the action above, $W_{\Lambda}$ is an irreducible $\Uv{m}$-representation 
with highest weight $\Lambda$. The $v$-sesquilinear 
web form is equal to the $v$-Schapovalov form. 
\end{corollary}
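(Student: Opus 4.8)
The plan is to realise $W_{\Lambda}$ as a cyclic highest weight module generated by $w_{\Lambda}$, then to match the web form of~\eqref{eq:webbracket} with the $v$-Shapovalov form through the two properties characterising the latter, and finally to deduce irreducibility, so that $W_{\Lambda}\cong V_{\Lambda}$. First I would note that $w_{\Lambda}$ has $\mathfrak{gl}_m$-weight $(N^{\ell},0^{m-\ell})$, equivalently $\mathfrak{sl}_m$-weight $\Lambda$, and that by Proposition~\ref{prop:CKM} the element $E_{+i}w_{\Lambda}$ is computed by glueing on top of $w_{\Lambda}$ a rung which would move the $\mathfrak{gl}_m$-weight to $(N^{\ell},0^{m-\ell})+i'$, a vector with an entry outside $\{0,\dots,N\}$; hence $E_{+i}w_{\Lambda}=0$ for all $i$, and $w_{\Lambda}$ spans $W((N^{\ell}),N)=\mathrm{End}((N^{\ell}))$. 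Since $\gamma_{m,m,N}$ sends $1_{\Lambda}$ to the identity web on $(N^{\ell})$, functoriality gives $X\cdot w_{\Lambda}=\gamma_{m,m,N}(X1_{\Lambda})$ for the action of Definition~\ref{def:webmodule}, so fullness of $\gamma_{m,m,N}$ forces $\Uv{m}\,w_{\Lambda}=\bigoplus_{\vec{k}}W(\vec{k},N)=W_{\Lambda}$. Thus $W_{\Lambda}$ is a quotient of the Verma module $M(\Lambda)$, and in particular $V_{\Lambda}$ is a quotient of $W_{\Lambda}$.

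Next I would check the two defining properties of the $v$-Shapovalov form. The normalisation $\langle w_{\Lambda},w_{\Lambda}\rangle=1$ is immediate, as $d\big((N^{\ell},0^{m-\ell})\big)=\tfrac12\big(N(N-1)\ell-\ell N(N-1)\big)=0$ and $\mathrm{ev}(w_{\Lambda}^{*}w_{\Lambda})=\mathrm{ev}(w_{\Lambda})=1$. For the contravariance $\langle Xu,u'\rangle=\langle u,\tau(X)u'\rangle$, $v$-sesquilinearity reduces to $X=E_{\pm i}1_{\lambda}$. Writing $E_{+i}u=R\circ u$ with $R=\gamma_{m,m,N}(E_{+i}1_{\lambda})$ the corresponding colour-$1$ rung ladder, one has $(E_{+i}u)^{*}u'=u^{*}\circ(R^{*}\circ u')$, so there are two things to establish: first, that the vertical mirror of $R$ (with reorientation) is again a $\gamma_{m,m,N}$-image, namely the lowering rung $\gamma_{m,m,N}(E_{-i}1_{\lambda+i'})$ with no extra scalar, which amounts to showing that the renormalised merge and split maps $M_{a,1}$ and $M'_{a,1}$ are mirror-images of one another under reflection (equivalently, adjoint with respect to the contraction pairing); and second, the arithmetic identity $d(\phi_{m,m,N}(\lambda+i'))-d(\phi_{m,m,N}(\lambda))=-(1+\lambda_i)$ for the normalisation factors, which is a direct computation with $d$. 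Together these produce precisely the coefficient $v^{-1-\lambda_i}$ that appears in the definition of $\tau$ (and symmetrically for $E_{-i}$). By uniqueness of the $v$-Shapovalov form, the web form equals it.

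For irreducibility I would use a character count. The restriction of $\phi_{m,m,N}$ to $P_{\Lambda}$ is a bijection onto $\Lambda(m,m)_N$, so $W_{\Lambda}$ has exactly the weights of $V_{\Lambda}$; and skew Howe duality (Sections~4 and~5 of~\cite{ckm}), under which $\bigoplus_{\vec{k}}W(\vec{k},N)$ agrees with $V_{\Lambda}$ at the level of characters, gives $\dim_{\C(v)}W(\vec{k},N)=\dim(V_{\Lambda})_{\vec{k}}$ for every $\vec{k}$. Combined with the surjection $W_{\Lambda}\twoheadrightarrow V_{\Lambda}$ of the first paragraph, this is an isomorphism of $\Uv{m}$-modules. (Alternatively: the web form is $v^{d(\vec{k})}$ times the restriction of the standard non-degenerate contraction pairing on $\Lambda^{\vec{k}}_q(\C^N_q)$ to its $\U{N}$-invariant subspace $W(\vec{k},N)$, hence non-degenerate, so its pullback along $M(\Lambda)\twoheadrightarrow W_{\Lambda}$ is the $v$-Shapovalov form on $M(\Lambda)$ and the kernel of the projection is the full radical, whose quotient is $V_{\Lambda}$.)

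The hard part will be the first of the two points in the second paragraph: verifying that reflection interchanges the renormalised merge and split intertwiners without introducing a correction factor. This is exactly the place where the particular normalisation chosen for the generating intertwiners — and the passage to the parameter $v=-q^{-1}$ — are forced, and it requires careful tracking of the $\ell(S,T)$-statistics of~\eqref{eq:M} and~\eqref{eq:M'} through the mirror symmetry of the ladders.
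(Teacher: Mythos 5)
Your proof is correct and follows the natural route: $w_{\Lambda}$ is a highest weight vector of weight $\Lambda$, cyclicity of $W_{\Lambda}$ follows from fullness of $\gamma_{m,m,N}$, the web form is identified with the $v$-Shapovalov form by verifying its two characterizing properties (normalization plus $\tau$-contravariance), and irreducibility follows from the character count supplied by quantum skew Howe duality. The paper itself defers the proof of this corollary to~\cite{my}, so there is no proof in the text to compare against, but your argument is the expected one.

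One remark on the ``hard part'' you flag at the end: once both the $\Uv{m}$-action and the bilinear pairing are formulated entirely inside $\mathcal{S}p(\mathrm{SL}_N)$, the statement $(E_{+i}1_{\lambda})^{*}=E_{-i}1_{\lambda+i'}$ is an immediate diagrammatic observation about ladders, and $\mathrm{ev}$ is automatically compatible with reflection-plus-reorientation because the relations of Definition~\ref{def:webrelations} are stated together with their mirror images. There is thus no need to track the $\ell(S,T)$-statistics of $M_{a,b}$ and $M'_{a,b}$ through $\Gamma_N$ for this corollary; the contravariance $\langle E_{+i}u,u'\rangle=\langle u,\tau(E_{+i}1_{\lambda})u'\rangle$ reduces entirely to the elementary identity $d(\phi_{m,m,N}(\lambda+i'))-d(\phi_{m,m,N}(\lambda))=-(1+\lambda_i)$, which you state. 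The renormalization of the generating intertwiners and the passage to $v=-q^{-1}$ are forced elsewhere in the paper (Remark~\ref{rem:unfortunately}, Lemma~\ref{lem:dualcanex}, Proposition~\ref{prop:webpsiinvariant}), not by this corollary: all the $v$-sesquilinear form needs to be well defined is that the web-relation coefficients are bar-symmetric, and since $\begin{bmatrix}a+b\\a\end{bmatrix}_v=(-1)^{ab}\begin{bmatrix}a+b\\a\end{bmatrix}_q$, that holds under both Cautis--Kamnitzer--Morrison's normalization and the renormalized one.
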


\begin{remark}
\label{rem:VversusW}
To circumvent the $v$ versus $q$ problem, in the rest of the paper 
I will consider $W_{\Lambda}$ as a $\C(v)$-vector space and 
write $V_{\Lambda}$ for the quotient of 
$\Uv{m}$ by the kernel of its action on $W_{\Lambda}$. Let 
$v_{\Lambda}$ for the image of $1_{\lambda}$ in $V_{\Lambda}$. We then have 
a well-defined $\C(v)$-linear $\Uv{m}$-isomorphism 
$V_{\Lambda}\to W_{\Lambda}$ such that  
$$Xv_{\Lambda}\mapsto Xw_{\Lambda}$$
for all $X\in \Uv{m}$. 
\end{remark}

\subsection{The dual canonical basis}
\label{sec:canonical}

There is a well-known basis of $W(\vec{k},N)$, 
the {\em dual canonical basis}. In this section I recall 
its definition, following the exposition in~\cite{fkhk}. 
\vskip0.5cm
In Section~\ref{sec:TT} I recalled that, for any 
$\vec{k}=(k_1,\ldots,k_m)\in\Lambda(m,m)_N$, 
the zero weight basis tensors of $\Lambda_q^{\vec{k}}(\C_q^N)$
are in one-to-one correspondence with the elements of 
$\mathrm{Col}^{(N^{\ell})}_{\vec{k}}$. I also recalled the 
total order on $\mathrm{Col}^{(N^{\ell})}$, which of course 
restricts to $\mathrm{Col}^{(N^{\ell})}_{\vec{k}}$ and 
induces a total order on the subspace of zero-weight basis tensors in 
$\Lambda_q^{\vec{k}}(\C_q^N)$. 

Using the isomorphism $D_k\colon \Lambda^k(\C_q^N)\to 
(\Lambda^{N-k}(\C_q^N))^*$, we can extend this 
order to the zero weight basis tensors in 
$$(\Lambda_q^{k_m}(\C_q^N))^{\epsilon_m}\otimes\cdots\otimes 
(\Lambda_q^{k_1}(\C^N_q))^{\epsilon_1},$$
for any object $(k_1^{\epsilon_1},\ldots,k_m^{\epsilon_m})$ in 
$\mathcal{S}p(\mathrm{SL}_N)$. 
\vskip0.5cm
The coproduct $\Delta$ on ${\mathbf U}_q(\mathfrak{sl}_N)$ in this paper is the 
same as that in~\cite{ckm} and~\cite{cp}. The corresponding {\em 
quasi $R$-matrix} 
\begin{equation}
\label{eq:quasiR}
\Theta\in 1\widehat{\otimes} 1+{\mathbf U}_q(\mathfrak{sl}_N)^+\widehat{\otimes}{\mathbf U}_q(\mathfrak{sl}_N)^-
\end{equation} 
is given in Section 10.1 D in~\cite{cp} (where it is denoted $\tilde{R}_h$). 
Here $\widehat{\otimes}$ is a suitably completed tensor product. The 
precise formula for $\Theta$ is complicated and not needed in this paper, 
but let us give two simple examples of the action of $\Theta$ on 
tensor products of fundamental representations.  
\begin{example}
\label{ex:Rmatrix}
The action of $\Theta$ on $\C_q^2\otimes \C_q^2$ is given by 
$$1\otimes 1+(q-q^{-1})E_1\otimes E_{-1}$$
and its action on $\C_q^3\otimes \Lambda_q^2\C_q^3$ is given by 
$$
1\otimes 1+q\left(1-q^{-2}\right)\left(E_1\otimes E_{-1}+ \left(-E_1E_2+q^{-1}E_2E_1\right)
\otimes \left(-E_{-2}E_{-1}+qE_{-1}E_{-2}\right)+E_2\otimes E_{-2}\right)$$
$$+q^2\left(1-q^{-2}\right)^2 E_1E_2\otimes E_{-1}E_{-2}.
$$
\end{example}
Lusztig showed in Theorem 4.1.2 in~\cite{lu} that $\Theta$ is uniquely 
determined by its form in~\eqref{eq:quasiR} and the property
\begin{equation}
\label{eq:uniqueR}
\Theta\Delta(u)=\overline{\Delta}(u)\Theta,
\end{equation} 
for all $u\in {\mathbf U}_q(\mathfrak{sl}_N)$. Recall that the 
{\em bar-involution} on 
${\mathbf U}_q(\mathfrak{sl}_N)$ is the $q$-antilinear algebra 
involution defined by 
$$\overline{E_{\pm i}}=E_{\pm i},\;\overline{K_i}=K_i^{-1}$$
and $\overline{\Delta}$ is the coproduct defined by  
$$\overline{\Delta}(u):=\overline{\Delta(\overline{u})},$$
where the bar-involution acts on 
${\mathbf U}_q(\mathfrak{sl}_N)\widehat{\otimes}{\mathbf U}_q(\mathfrak{sl}_N)$ 
factorwise. By $q$-antilinear is meant that the map is $\C$-linear but 
sends $q$ to $q^{-1}$. 

The relation with Lusztig's coproduct $\Delta_L$ and quasi $R$-matrix 
$\Theta_L$ is easy to give:
$$\Delta_L=(\rho'\otimes\rho')\Delta\rho'\quad\Theta_L=
(\rho'\widehat{\otimes}\rho')
\Theta,$$
where $\rho'$ is the $q$-antilinear algebra anti-involution defined by 
$$\rho'(E_{+i})=E_{-i},\;\rho'(E_{-i})=E_{+i},\;\rho'(K_i)=K_i^{-1}.$$ 
Note that 
$$\overline{\rho'(x)}=\rho'(\overline{x}).$$

Corollary 4.1.3 in~\cite{lu} implies that 
\begin{equation}
\label{eq:Rinvertible}
\Theta\overline{\Theta}=\overline{\Theta}\Theta=1\widehat{\otimes}1.
\end{equation}

For each $k\in \N$, the bar-involution on $\Lambda_q^k(\C^N_q)$ 
is given by 
$$\psi(\sum_{\nu} a_{\nu}(q) x_{\nu})=\sum_{\nu} a_{\nu}(q^{-1}) 
x_{\nu}.$$
This bar-involution is compatible with the one on 
${\mathbf U}_q(\mathfrak{sl}_N)$ in the sense that 
$$\psi(Xz)=\overline{X}\;\psi(z)$$ 
for any $X\in {\mathbf U}_q(\mathfrak{sl}_N)$ and $z\in \Lambda_q^k(\C^N_q)$. 

In order to extend the bar-involution to tensor products of fundamental 
representations, write any tensor $z$ as 
$x\otimes y$, with $x$ and $y$ each having fewer tensor factors than $z$. 
Define inductively
$$\psi(x\otimes y):=\overline{\Theta}(\psi(x)\otimes\psi(y))
.$$
One can show that this definition does not depend on how one chooses 
the factorization $z=x\otimes y$. Note that this really 
is an involution by~\eqref{eq:Rinvertible}, and that 
$$\psi(X(\bullet\otimes \bullet))=\overline{X}\psi(\bullet\otimes \bullet)$$
for any $X\in {\mathbf U}_q(\mathfrak{sl}_N)$. 

Since $$\Theta\in 1\otimes 1+
{\mathbf U}_q(\mathfrak{sl}_N)^+\widehat{\otimes}
{\mathbf U}_q(\mathfrak{sl}_N)^-,$$
it follows that, for any $T\in\mathrm{Col}^{(N^{\ell})}_{\vec{k}}$, we have 
\begin{equation}
\label{eq:baractionontensors}
\psi(x^T)=x^{T}+\sum_{T'\prec T} c^{T,T'}(q) x^{T'}
\end{equation}
with $T'\in \mathrm{Col}^{(N^{\ell})}_{\vec{k}}$ and 
$c^{T,T'}(q)\in \C(q)$.

As already remarked, the dual canonical basis elements of $W(\vec{k},N)$ 
are parametrized by the subset 
$$\mathrm{Std}^{(N^{\ell})}_{\vec{k}}\subset \mathrm{Col}^{(N^{\ell})}_{\vec{k}}$$ 
of semi-standard tableaux. From now on, I adopt 
Leclerc and Toffin's~\cite{lt} convention 
to use Greek lower case letters for column-strict tableaux and 
Roman upper case letters for semi-standard tableaux. The 
following theorem is proved in 
Chapter 27 in~\cite{lu}, for example. Note that we switch to $v=-q^{-1}$ again.  
\begin{theorem}[Kashiwara, Lusztig]
\label{thm:KLdualcan}
For any $T\in \mathrm{Std}^{(N^{\ell})}_{\vec{k}}$, there exists a 
unique element $b^T\in W(\vec{k},N)$ such that 
\begin{equation}
\psi(b^T)=b^T
\end{equation}
\begin{equation}
\label{eq:negexp}
b^T=x^T+\sum_{\tau\prec T} d^{\tau,T}(v)x^{\tau}
\end{equation}
with $\tau\in \mathrm{Col}^{(N^{\ell})}_{\vec{k}}$ and 
$d^{\tau,T}(v)\in v^{-1}\Z[v^{-1}]$. The property 
in~\eqref{eq:negexp} is called the {\em negative exponent property}.

The $b^T$ form a basis of $W(\vec{k},N)$ which is called the 
{\bf dual canonical basis}. 
\end{theorem}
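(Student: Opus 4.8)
The plan is to deduce the statement from the general existence-and-uniqueness theorem for canonical bases (\emph{Lusztig's Lemma}, as in Chapter~27 of~\cite{lu}), applied to the zero-weight space of the tensor product $\Lambda^{\vec k}_q(\C^N_q)$ equipped with the bar-involution $\psi$. Three ingredients are needed: that $\psi$ is a $v$-antilinear involution; that its matrix on the standard basis $\{x^\tau\}_{\tau\in\mathrm{Col}^{(N^\ell)}_{\vec k}}$ is unitriangular for the total order $\prec$; and that this matrix is \emph{integral}, i.e.\ $\psi$ preserves the lattice $\mathcal L:=\bigoplus_\tau\Z[v,v^{-1}]\,x^\tau$. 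The first two are already in hand: $\psi$ is involutive by~\eqref{eq:Rinvertible}, and~\eqref{eq:baractionontensors} says $\psi(x^T)=x^T+\sum_{T'\prec T}c^{T,T'}(q)\,x^{T'}$. For integrality I would argue by induction on the number of tensor factors using the explicit form~\eqref{eq:quasiR} of the quasi $R$-matrix $\Theta$: on a tensor product of fundamental representations each summand of $\overline\Theta$ sends a pair of standard basis tensors to a Laurent polynomial in $q$ times a basis tensor (cf.\ Example~\ref{ex:Rmatrix}), so $\overline\Theta$ preserves the $\Z[q,q^{-1}]=\Z[v,v^{-1}]$-lattice; since $\psi(x\otimes y)=\overline\Theta(\psi(x)\otimes\psi(y))$ and $\psi$ preserves the lattice on each factor, it preserves $\mathcal L$. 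In particular $c^{T,T'}(q)\in\Z[v,v^{-1}]$.

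With these in place, the core of the argument is the standard inductive construction. Proceeding along $\prec$, suppose $b^\tau$ has been found for every $\tau\prec T$; the $b^\tau$ then form a $\Z[v,v^{-1}]$-basis of $\sum_{\tau\prec T}\Z[v,v^{-1}]\,x^\tau$ whose change-of-basis matrix to the $x^\tau$ is unitriangular with off-diagonal entries in $v^{-1}\Z[v^{-1}]$, and each $b^\tau$ is $\psi$-fixed. Writing the desired element as $b^T=x^T+\sum_{\tau\prec T}\alpha_\tau b^\tau$ and expanding $\psi(x^T)$ from~\eqref{eq:baractionontensors} in the basis $\{b^\mu\}_{\mu\preceq T}$, the condition $\psi(b^T)=b^T$ becomes, coefficient by coefficient, $\alpha_\tau-\overline{\alpha_\tau}=g_\tau$ where $g_\tau\in\Z[v,v^{-1}]$ is determined by the already-known data and satisfies $\overline{g_\tau}=-g_\tau$. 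Such an equation has a unique solution with $\alpha_\tau\in v^{-1}\Z[v^{-1}]$: if $g_\tau=\sum_{j\ge1}c_j(v^{-j}-v^{j})$ then $\alpha_\tau=\sum_{j\ge1}c_jv^{-j}$, and the difference of two solutions is bar-invariant and lies in $v^{-1}\Z[v^{-1}]$, hence vanishes. This produces $b^T$, establishes~\eqref{eq:negexp} (the $d^{\tau,T}(v)$ lie in $v^{-1}\Z[v^{-1}]$ since both the $\alpha_\tau$ and the entries of the $\{b^\mu\}\leftrightarrow\{x^\mu\}$ transition do), and gives uniqueness simultaneously.

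The remaining point is that the $b^T$ with $T$ semi-standard actually lie in the web space $W(\vec k,N)$ and form a basis of it. Here I would invoke that $\Lambda^{\vec k}_q(\C^N_q)$ is a \emph{based module} in Lusztig's sense (Chapter~27 of~\cite{lu}): the canonical basis is compatible with the isotypic decomposition, and the trivial isotypic summand---which is $W(\vec k,N)$ by the Remark after Definition~\ref{def:webmodule}---is spanned precisely by those weight-zero canonical basis vectors whose associated crystal vertex is a highest weight vertex. Under the identification of the relevant crystal with column-strict tableaux of shape $(N^\ell)$ and content $\vec k$ from Section~\ref{sec:TT}, the weight-zero locus corresponds to $\mathcal S(\vec k,N)_0$ and its highest-weight vertices are exactly the semi-standard tableaux, i.e.\ the elements of $\mathrm{Std}^{(N^\ell)}_{\vec k}$; since $\psi$ is $\mathbf U_q(\mathfrak{sl}_N)$-compatible, $W(\vec k,N)$ is $\psi$-stable and the $b^T$ indexed by these vertices are its dual canonical basis. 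I expect this last identification to be the main obstacle: pinning down that the invariant subspace is a based submodule whose basis is labelled by semi-standard tableaux is the only step with genuine representation-theoretic content, whereas the integrality of $\Theta$ is routine and the bar-invariant-basis lemma is purely formal.
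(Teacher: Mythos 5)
The paper gives no proof of Theorem~\ref{thm:KLdualcan}: it is stated with a citation to Chapter~27 of~\cite{lu}, and the surrounding exposition follows~\cite{fkhk}. So there is no internal argument to compare against, and the question is whether your unpacking of the cited material is correct. The first part of your proof---involutivity of $\psi$, unitriangularity of its matrix on $\{x^\tau\}$ with respect to $\prec$, integrality, and the inductive ``Lusztig's lemma'' construction producing a unique bar-invariant element with the negative exponent property in the ambient space $\Lambda^{\vec k}_q(\C_q^N)$---is standard and sound. One caveat: the integrality of $\overline\Theta$ on the $\Z[v,v^{-1}]$-lattice is more safely justified from the divided-power expression for $\Theta$ on integrable modules (Lusztig, \S 24) than by inspecting the PBW-type summands, whose coefficients are not individually integral; Example~\ref{ex:Rmatrix} illustrates the conclusion but not the mechanism.

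You rightly identify the genuine content as the last step, that the $b^T$ with $T\in\mathrm{Std}^{(N^\ell)}_{\vec k}$ lie in $W(\vec k,N)$ and span it, and your appeal to highest-weight crystal vertices is the right idea, but two points need tightening. First, ``compatible with the isotypic decomposition'' is looser than what is actually used: the precise input is that the upper global (dual canonical) basis of a tensor product of fundamental representations restricts to a basis of the invariant subspace, which is part of the based-module/cell machinery of Chapter~27 of~\cite{lu} (or the explicit formulation in~\cite{fkhk}), and one must be careful to invoke the version adapted to \emph{dual} canonical bases, since for the lower global basis the invariants are not a based submodule. Second, the identification ``highest-weight weight-zero crystal vertices $=$ semi-standard tableaux'' depends on the crystal tensor-product rule, hence on the choice of coproduct and the order of tensor factors; with the opposite Kashiwara convention the same computation in $\C_q^2\otimes\C_q^2$ singles out the \emph{non}-semi-standard weight-zero tableau. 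This is exactly why the paper fixes the coproduct (as in~\cite{cp}), the total order $\prec$, and the reversed tensor ordering $x_{\nu^m}\otimes\cdots\otimes x_{\nu^1}$; with those conventions your identification does hold, but it must be verified against them rather than asserted. With those two refinements made precise, your argument is the proof the cited references give.
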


In the following examples I use the notation $x_S$ where $S$ is a subset 
of $\{1,\ldots,N\}$ as explained in Section~\ref{sec:fundreps}. 
\begin{example}
\label{ex:smallexs}
Using the expressions in Example~\ref{ex:Rmatrix} one can easily check 
that 
$$
x_2\otimes x_1+v^{-1}x_1\otimes x_2\in W((1,1),2)
$$
and
$$
x_3\otimes x_{\{2,1\}}+v^{-1}x_2\otimes x_{\{3,1\}}+v^{-2}x_1\otimes x_{\{3,2\}}\in 
W((1,2),3)
$$
are invariant $\U{2}$ and $\U{3}$ tensors which are 
$\psi$-invariant. Since they have the negative exponent property too, 
they are dual canonical basis elements.  
\end{example}

Lusztig also defined a symmetric bilinear inner product on 
$\Lambda^{\vec{k}}_q(\C_q^N)$. On zero-weight vectors it is defined by 
$$(x^{\tau},x^{\tau'}):=
\delta_{\tau,\tau'}$$
for any $\tau,\tau'\in \mathrm{Col}^{(N^{\ell})}_{\vec{k}}$.
The negative exponent property for the dual canonical basis 
elements is equivalent to the {\em almost orthogonality} property:
$$(b^T, b^{T'})\in \delta_{T,T'}+v^{-1}\Z[v^{-1}],$$
for all $T,T'\in\mathrm{Std}^{(N^{\ell})}_{\vec{k}}$. 

Alternatively, one can use the $v$-sesquilinear form defined by 
$$
\langle x^{\tau},x^{\tau'}\rangle := \overline{(x^{\tau},\psi(x^{\tau'}))}.  
$$
Since the dual canonical basis elements are $\psi$-invariant, we have 
$$\langle b^{T}, b^{T'}\rangle=\overline{(b^{T}, b^{T'})}$$
for any $T,T'\in\mathrm{Std}^{(N^{\ell})}_{\vec{k}}$. In particular we have 
$$\langle b^T,b^{T'}\rangle \in \delta_{T,T'}+ v\Z[v],$$
for all $T,T'\in \mathrm{Std}^{(N^{\ell})}_{\vec{k}}$. 
Note also that $\langle \cdot,\cdot\rangle$ is not 
symmetric, but satisfies
$$
\langle x,x'\rangle=\langle \psi(x'),\psi(x)\rangle.
$$
\vskip0.5cm
Just as we defined the $v$-sesquilinear 
web form $\langle\cdot,\cdot,\rangle$ on $W(\vec{k},N)$, we can also 
define a symmetric {\em $v$-bilinear web form} $(\cdot,\cdot)$ by 
$$\langle \cdot,\cdot\rangle=\overline{(\cdot,\psi(\cdot))}.$$ 

Let $w\in W(\vec{k},N)$ be any monomial web and write 
$$w=\sum_{\tau\in \mathrm{Col}^{(N^{\ell})}_{\vec{k}}} c^{\tau}(v) x^{\tau}$$
for certain coefficients $c^{\tau}(v)\in \C(v)$.   
\begin{lemma}
\label{lem:bilinearforms}
We have 
$$(w,w)=\sum_{\tau\in\mathrm{Col}^{(N^{\ell})}_{\vec{k}}} c^{\tau}(v)^2.$$
This means that the symmetric $v$-bilinear web form is the restriction to 
$W(\vec{k},n)$ of Lusztig's bilinear form on $\Lambda_q^{\vec{k}}(\C_q^{N})$. 
\end{lemma}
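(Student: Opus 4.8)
The plan is to reduce the statement to a comparison of two $v$-sesquilinear forms on the web space and then apply the uniqueness of the Shapovalov form. First note that the displayed equality and the last sentence of the lemma are equivalent: a monomial web $w\in W(\vec k,N)$ is a $\U N$-invariant tensor, hence lies in the zero-weight space of $\Lambda^{\vec k}_q(\C^N_q)$, which has basis $\{x^\tau\mid\tau\in\mathrm{Col}^{(N^\ell)}_{\vec k}\}$, and on this basis Lusztig's bilinear form is $\delta_{\tau,\tau'}$, so its restriction sends $w=\sum_\tau c^\tau(v)x^\tau$ to $\sum_\tau c^\tau(v)^2$. Thus it suffices to show that the symmetric $v$-bilinear web form agrees with Lusztig's form on $W(\vec k,N)$. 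Now pass to the associated $v$-sesquilinear forms: the bar-involution $\psi$ satisfies $\psi(E_{\pm i}z)=\overline{E_{\pm i}}\,\psi(z)=E_{\pm i}\psi(z)$, hence it preserves $\U N$-invariants and restricts to $W(\vec k,N)$, and by \eqref{eq:Rinvertible} it squares to the identity. Both the web bilinear form and Lusztig's bilinear form are recovered from their $v$-sesquilinear partners by the \emph{same} rule $(x,y)=\overline{\langle x,\psi(y)\rangle}$, which inverts $\langle x,y\rangle=\overline{(x,\psi(y))}$ since $\psi^2=\mathrm{id}$. So the lemma is equivalent to the equality of the web $v$-sesquilinear form of \eqref{eq:webbracket} with Lusztig's $v$-sesquilinear form on $W_\Lambda=\bigoplus_{\vec k}W(\vec k,N)$.

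By Corollary~\ref{cor:webirrep} the web $v$-sesquilinear form is the $v$-Shapovalov form on $W_\Lambda\cong V_\Lambda$, which is the \emph{unique} $v$-sesquilinear form with $\langle w_\Lambda,w_\Lambda\rangle=1$ and $\langle Xw,w'\rangle=\langle w,\tau(X)w'\rangle$ for all $X\in\Uv{m}$. So it remains to check these two properties for Lusztig's $v$-sesquilinear form. The normalization is immediate: by Example~\ref{ex:highesttableau} the highest-weight space is spanned by $x^{T_\Lambda}$, and since a tableau of shape and content $(N^\ell)$ has all columns forced to be $(1<2<\cdots<\ell)$ we get $\mathrm{Col}^{(N^\ell)}_{(N^\ell)}=\{T_\Lambda\}$, whence $\psi(x^{T_\Lambda})=x^{T_\Lambda}$ by \eqref{eq:baractionontensors} and $\langle x^{T_\Lambda},x^{T_\Lambda}\rangle=\overline{(x^{T_\Lambda},x^{T_\Lambda})}=1$, matching $w_\Lambda\leftrightarrow x^{T_\Lambda}$.

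For the contravariance it suffices to treat $X=E_{\pm i}$. Rewriting $\langle\cdot,\cdot\rangle=\overline{(\cdot,\psi(\cdot))}$ turns $\langle E_{\pm i}w,w'\rangle=\langle w,\tau(E_{\pm i})w'\rangle$ into an identity for Lusztig's bilinear form involving $\psi$, the $\Uv{m}$-action, and $\tau$. This I would deduce from two ingredients: the $\U N$-contravariance of Lusztig's bilinear form (Lusztig, Chapter~27 of~\cite{lu}, as recalled around \eqref{eq:baractionontensors}), and the compatibility of $\psi$ with the skew Howe $\Uv{m}$-action, namely $\psi(Xz)=\overline{X}\,\psi(z)$ for $X\in\Uv{m}$, together with the explicit shape of $\tau$ (whose monomial $v$-powers are designed precisely to absorb the weight twists coming from $\psi$).

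I expect this last ingredient to be the main obstacle: one must show that the bar-involution $\psi$ on $\Lambda^\bullet_q(\C^N_q)^{\otimes m}$ — built from the quasi $R$-matrix of $\mathbf U_q(\mathfrak{sl}_N)$ — is simultaneously a bar-involution for the \emph{other}, $\Uv{m}$-action, and that the resulting contravariance matches $\tau$ on the nose. The delicate point is the bookkeeping of the substitution $v=-q^{-1}$ of Remark~\ref{rem:unfortunately}: I would verify the claim on the ladder generators of Proposition~\ref{prop:CKM} — equivalently on the explicit $E_{\pm i}$-formulas of Proposition~\ref{prop:vqHowe} — using that the structure constants of the $\Uv{m}$-action are symmetric in $v\leftrightarrow v^{-1}$ up to exactly the monomial $v$-powers appearing in $\tau$. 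Once that compatibility is in place, the $\tau$-contravariance of Lusztig's form, and hence the equality of the two bilinear forms on $W(\vec k,N)$, is formal, and the lemma follows.
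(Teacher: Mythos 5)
Your proposal takes a genuinely different route from the paper, but it has a real gap that you yourself flag without closing.

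The paper's proof is a direct, local state-sum computation: it compares the $v$-power that a given state contributes to a $Y$-shaped (or cup) piece of $w$ with the $v$-power contributed to the mirrored piece of $w^*$, observes $\ell(T,S\setminus T)+\ell(S\setminus T,T)=ab$, and then checks that the change in the normalization exponent $d(\vec k)$ across that piece is exactly $v^{ab}$. Summing over states gives $\mathrm{ev}(w^*w)=v^{-d(\vec k)}\sum_\tau c^\tau(v)^2$, hence $(w,w)=\sum_\tau c^\tau(v)^2$. No uniqueness of the Shapovalov form, no bar-involution $\psi$, and no skew Howe machinery are needed.

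Your route — reduce to comparing sesquilinear forms, invoke Corollary~\ref{cor:webirrep} to identify the web form with the $v$-Shapovalov form, then verify that Lusztig's form on $W(\vec k,N)$ satisfies the Shapovalov characterization — is plausible in outline, and the normalization step is fine since $\mathrm{Col}^{(N^\ell)}_{(N^\ell)}=\{T_\Lambda\}$. But the contravariance step is precisely where you stop: you write that you ``would verify the claim on the ladder generators'' and that you ``expect this last ingredient to be the main obstacle,'' without doing it. This is a genuine gap, not a routine check. The bar-involution $\psi$ on $\Lambda^\bullet_q(\C^N_q)^{\otimes m}$ is built from the quasi $R$-matrix of ${\mathbf U}_q(\mathfrak{sl}_N)$, and the compatibility $\psi(Xz)=\overline X\,\psi(z)$ for $X\in\Uv m$ (acting by ladders) is exactly the content of Proposition~\ref{prop:webpsiinvariant} (the $\psi$-equivariance of monomial webs, hence of ladders). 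That proposition appears \emph{after} Lemma~\ref{lem:bilinearforms} in the paper, so you would at minimum need to reorder the exposition and verify there is no hidden circularity; more seriously, even granting $\psi$-equivariance of the $\Uv m$-action you must still match the explicit $v$-powers in $\tau(E_{\pm i}1_\lambda)$ against the weight-dependent twists coming from $\psi$ and the $v=-q^{-1}$ substitution of Remark~\ref{rem:unfortunately}. That bookkeeping is the heart of the matter and is left entirely to the reader. Unless that verification is supplied, the argument does not close, whereas the paper's state-sum computation is short, self-contained, and available at this point in the development.
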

\begin{proof}
One way to compute $\mathrm{ev}(w^*w)$ is to use the 
state-sum model for intertwiners. Any state in $\mathcal{S}(w^*w)$ is given by 
a pair of states in $\mathcal{S}(w)$ and $\mathcal{S}(w^*)$ 
which match on the boundary. Therefore, it suffices to compare 
the powers of $v$ associated to $u$ and $u^*$ for any given state 
when $u$ is a cup or $Y$ or $Y^*$-shaped. I am using here the fact that 
there exists a bijection between $\mathcal{S}(u)$ and $\mathcal{S}(u^*)$ by 
assigning exactly the same weight to two corresponding edges in $u$ and $u^*$. 
   
I only show the case for a $Y$-shaped web, the other cases being similar. 
Suppose that 
$$
u=\txt{\unitlength 0.1in
\begin{picture}(  8.0000,  7.0000)(  2.0000,-10.0000)
\special{pn 8}%
\special{pa 600 600}%
\special{pa 200 300}%
\special{fp}%
\special{sh 1}%
\special{pa 200 300}%
\special{pa 242 356}%
\special{pa 244 332}%
\special{pa 266 324}%
\special{pa 200 300}%
\special{fp}%
\special{pn 8}%
\special{pa 600 600}%
\special{pa 1000 300}%
\special{fp}%
\special{sh 1}%
\special{pa 1000 300}%
\special{pa 936 324}%
\special{pa 958 332}%
\special{pa 960 356}%
\special{pa 1000 300}%
\special{fp}%
\special{pn 8}%
\special{pa 600 1000}%
\special{pa 600 600}%
\special{fp}%
\special{sh 1}%
\special{pa 600 600}%
\special{pa 580 668}%
\special{pa 600 654}%
\special{pa 620 668}%
\special{pa 600 600}%
\special{fp}%
\put(3.5000,-5.0000){\makebox(0,0){${}_{a}$}}%
\put(8.5000,-5.0000){\makebox(0,0){${}_{b}$}}%
\put(7.50000,-8.0000){\makebox(0,0){${}_{a+b}$}}%
\end{picture}
$$
and that a certain state assigns to 
the bottom $(a+b)$-edge an $(a+b)$-element subset $S\subseteq 
\{1,\ldots,N\}$, to the upper $a$-edge an $a$-element subset $T\subset S$ 
and to the upper $b$-edge the $b$-element subset $S\backslash T$. 
For this state, the corresponding 
power of $v$ is equal to 
$$
v^{-\ell(T,S\backslash T)}.
$$ 
For $u^*$, with the corresponding state, we get 
$$
v^{\ell(S\backslash T,T)}.
$$
Recall that 
$$\ell(T,S\backslash T)+\ell(S\backslash T,T)=ab.$$

Now compute $d(\vec{k})$ at the top and the bottom of $u$. We have 
$$
a(a-1)+b(b-1)=(a+b)(a+b-1)-2ab,
$$
so 
$$
d(\vec{k}_{\mathrm{top}})=v^{ab} d(\vec{k}_{\mathrm{bottom}}).
$$ 

Doing the same analysis for cups and $Y^*$-shaped webs, we arrive at 
$$
\mathrm{ev}(w^*w)=v^{d(\vec{k})}\sum_{T\in \mathrm{Col}^{(N^{\ell})}_{\vec{k}}} c^T(v)^2,
$$
which implies the lemma.
\end{proof}

\vskip0.5cm

Here are some more examples of dual canonical basis elements. In these 
examples I also use the notation of subsets.  
\begin{lemma}
\label{lem:dualcanex}
The following elements are all dual canonical:
\begin{enumerate}
\item 
$$
b^{++}_{a(N-a)}:=M'_{a(N-a)}(x_{\{N,\ldots,1\}})=
\sum_{|T|=a} v^{-\ell(T,T^c)} x_T\otimes x_{T^c},
$$
for any $1\leq a\leq N$;
\item 
$$ 
b_{a}^{-+}:=(D_{N-a}\otimes 1)(b_{(N-a)a}^{++})\quad\text{and}
\quad b_a^{+-}:=(1\otimes D_{N-a})(b_{a(N-a)}^{++}),
$$
for any $1\leq a\leq N$;
\item 
$$
t^{+++}_{abc}:=(M'_{ab}\otimes 1)(b^{++}_{(N-c)c}),
$$
for any $1\leq a,b,c\leq N$ such that $a+b+c=N$; 
\item 
$$
t^{---}_{abc}:=(1\otimes D_{a+c}M_{ac}\otimes 1)(b^{-+}_{a}\otimes b^{+-}_{c})
$$
for any $1\leq a,b,c\leq N$ such that $a+b+c=N$. 
\end{enumerate}
\end{lemma}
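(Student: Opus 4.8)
The plan is to check, for each of the four families, the two conditions of Theorem~\ref{thm:KLdualcan} that characterize a dual canonical basis element of the relevant (small) web space: $\psi$-invariance, and the negative exponent property with respect to the order $\prec$, the leading standard basis vector being indexed by a semi-standard tableau. I would treat the families in the order (1), (2), (3), (4), since each is built from the earlier ones by applying one of $M_{a,b}$, $M'_{a,b}$, $D_a$; Example~\ref{ex:smallexs} already carries out the smallest instances of (1) and (2) directly, and the general argument is the same computation done systematically.

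\emph{Negative exponent property.} Starting from (1): expanding $b^{++}_{a(N-a)}=M'_{a,N-a}(x_{\{N,\dots,1\}})$ via~\eqref{eq:M'} gives $\sum_{|T|=a}v^{-\ell(T,T^c)}x_T\otimes x_{T^c}$ with $T^c=\{1,\dots,N\}\setminus T$; since $\ell(T,T^c)\ge 0$ with equality only for $T=\{N-a+1,\dots,N\}$, exactly one coefficient equals $1$ and all others lie in $v^{-1}\Z[v^{-1}]$, and one checks that the index of the leading monomial is the $\prec$-greatest tableau occurring, using the tensor/tableau dictionary of Section~\ref{sec:TT}. For (2)--(4) one iterates: compose the already known expansion with the explicit formula~\eqref{eq:M}, \eqref{eq:M'} or~\eqref{eq:D}, note that the elementary intertwiner multiplies each standard basis vector by a power of $v$ recorded by the statistic $\ell(\cdot,\cdot)$ and re-indexes the subsets, and --- after rewriting dual tensor factors in terms of the standard basis on $(\Lambda^a_q(\C^N_q))^*$ defined via $D_a$ as in Section~\ref{sec:canonical} --- track that a single monomial keeps coefficient $1$, that it is indexed by the semi-standard tableau obtained from the leading semi-standard tableaux of the inputs, and that all remaining coefficients still lie in $v^{-1}\Z[v^{-1}]$. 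The hypotheses $a+b+c=N$ in (3) and (4) are exactly what makes the output a zero-weight tensor of the right type. This step is purely combinatorial, and it is here that the chosen normalization of $M,M',D$ (the powers of $(-q)$ absorbed into $v=-q^{-1}$, and the tag signs $(-1)^{a(N-a)}$) is used: it is precisely what forces the top coefficient to be $1$ rather than a sign times a power of $v$.

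\emph{Bar-invariance.} The vector $x_{\{N,\dots,1\}}$ spans the trivial representation $\Lambda^N_q(\C^N_q)$ and is $\psi$-invariant. I would show that each elementary intertwiner $f\in\{M_{a,b},M'_{a,b},D_a\}$ carries $\psi$-invariant tensors to $\psi$-invariant tensors, which then yields all four families by applying it successively. The bar-involution on a tensor product is twisted by $\overline{\Theta}$, but this is harmless: letting $\overline{f}$ denote $f$ with its matrix coefficients conjugated ($q\mapsto q^{-1}$), one has that $\overline{f}$ is a $\overline{\Delta}$-intertwiner, and so is $\Theta\circ f$ by the characterizing property~\eqref{eq:uniqueR}; since the source fundamental representation occurs with multiplicity one in the target, the space of $\overline{\Delta}$-intertwiners between them is one-dimensional, so $\overline{f}=\kappa\,\Theta f$ for a scalar $\kappa$. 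Comparing the coefficient of the leading monomial --- on which $\Theta\in 1\widehat{\otimes}1+{\mathbf U}_q(\mathfrak{sl}_N)^+\widehat{\otimes}{\mathbf U}_q(\mathfrak{sl}_N)^-$ from~\eqref{eq:quasiR} acts as the identity, because that monomial is extremal in the relevant tensor slot --- forces $\kappa=1$. Hence for $\psi$-invariant $z$ one gets $\psi(f(z))=\overline{\Theta}\,(\overline{f})(z)=\overline{\Theta}\,\Theta\,f(z)=f(z)$ by $\Theta\overline{\Theta}=1\widehat{\otimes}1$ from~\eqref{eq:Rinvertible}. The duals introduced by the $D_a$'s are handled the same way, once the induced bar-involution on $(\Lambda^a_q(\C^N_q))^*$ is fixed; for the smallest cases this reduces to the hands-on check with the formulas for $\Theta$ in Example~\ref{ex:Rmatrix}, exactly as in Example~\ref{ex:smallexs}. (As an additional consistency check, once bar-invariance is known, Lemma~\ref{lem:bilinearforms} gives $(b,b)\in 1+v^{-1}\Z[v^{-1}]$.)

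\emph{Main obstacle.} I expect the genuinely delicate point to be bar-invariance for the composite maps in (3) and (4): one must keep careful track of which copy of the quasi-$R$-matrix acts on which pair of tensor slots, of the bar-involution on the dual factors introduced by the $D_a$'s, and of the tag signs $(-1)^{a(N-a)}$, and verify that all of these are mutually consistent with the substitution $v=-q^{-1}$ and do not spoil the normalization. The leading-term and negative-exponent bookkeeping, by contrast, is routine once the correspondence of Section~\ref{sec:TT} and the order $\prec$ are used consistently.
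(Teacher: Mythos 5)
Your argument is correct, but it does considerably more work than the paper's proof, and in doing so it inverts the paper's logical architecture. The paper's proof is short precisely because it exploits the fact that in each of the four cases the target hom-space of invariant tensors --- $\mathrm{Hom}(\Lambda^N_q(\C^N_q),\,\Lambda^a_q\otimes\Lambda^{N-a}_q)$, etc. --- is one-dimensional. One-dimensionality means there is exactly one semi-standard tableau and hence exactly one dual canonical basis vector $b^T$ in that space; any nonzero element whose expansion on the standard basis has leading coefficient $1$ at $x^T$ is then automatically equal to $b^T$, and there is nothing left to check. In particular the $\psi$-invariance follows for free and need not be verified separately; the only real computation is the leading-term bookkeeping, which the paper records as ``has the negative exponent property with top term $\ldots$.''

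You instead prove $\psi$-invariance directly, by showing that each of $M_{a,b}$, $M'_{a,b}$, $D_a$ carries $\psi$-invariants to $\psi$-invariants, using the characterizing property~\eqref{eq:uniqueR} of the quasi-$R$-matrix and a multiplicity-one argument for $\overline{\Delta}$-intertwiners, then pinning down the scalar $\kappa=1$ by looking at the extremal monomial. This is sound, and it is genuinely a different route: you are in effect establishing, before the fact, the content of Proposition~\ref{prop:webpsiinvariant} (that the elementary webs are $\psi$-equivariant) --- whereas the paper deliberately proves Lemma~\ref{lem:dualcanex} first by the one-dimensionality shortcut, and then uses that lemma as the input to Proposition~\ref{prop:webpsiinvariant}, so as not to have to carry out your $\Theta$-bookkeeping on its own. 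Your route is more robust (it would work even if the hom-spaces were not one-dimensional) but it pushes the delicate part --- tracking which tensor slots $\Theta$ acts on and how the dual factors introduced by $D_a$ interact with the induced bar-involution, especially in the two- and three-step composites of parts (3) and (4) --- into exactly the place where you yourself flag the ``main obstacle.'' The paper sidesteps that obstacle entirely, so it is worth noticing that for this particular lemma the one-dimensionality observation is the missing simplification, not just a consistency check.

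Two minor points of caution if you do carry your route through: first, the multiplicity-one claim should be stated for the space of $\overline{\Delta}$-intertwiners $X\to Y$ (not merely for the multiplicity of $X$ in $Y$ as an abstract module), although in the present cases the two agree; second, the identification $\overline{f}=\Theta\circ f$ must be iterated carefully across the tensor factors in (3) and (4), keeping track of where each copy of $\Theta$ lives, as you already note.
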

\begin{proof}
\begin{enumerate}
\item Note that $b_{a(N-a)}^{++}$ has the negative exponent property 
with top term   
$$x_{\{N,\ldots,N-a+1\}}\otimes x_{\{N-a,\ldots,1\}}.
$$ 
Since  
$\mathrm{Hom}(\Lambda^N(\C_q^N),\Lambda^a(\C^N_q)\otimes \Lambda^{N-a}(\C_q^N))
$ has dimension one, this implies that $b_{a(N-a)}^{++}$ 
has to be dual canonical. 
\item Follows directly from (1).
\item Note that $t^{+++}_{abc}$ has the negative exponent property with 
top term 
$$
x_{\{N,\ldots,b+c+1\}}\otimes x_{\{b+c,\ldots,c+1\}}\otimes x_{\{c,\ldots,1\}}
$$
Again we have 
$$\dim (\mathrm{Hom}(\Lambda^N(\C_q^N),\Lambda^a(\C^N_q)\otimes \Lambda^b(\C_q^N)
\otimes \Lambda^c(\C^N_q)))=1,$$
so $t^{+++}_{abc}$ is indeed dual canonical.
\item Note that $t^{---}_{abc}$ has the negative exponent property with 
top term
$$
\hat{x}_{\{N,\ldots,a+1\}}\otimes 
\hat{x}_{\{N,\ldots,a+b+1,a\ldots,1\}}\otimes 
\hat{x}_{\{a+b,\ldots,1\}}
$$
Also in this case the relevant hom-space is one-dimensional, so 
the result follows.  
\end{enumerate}
\end{proof}
Not all dual canonical basis elements can be 
represented by monomial webs. However, one can prove the 
following analogue of Proposition 2 in~\cite{kk}. I identify webs 
with intertwiners, so I call a certain web 
$\psi$-equivariant if the corresponding intertwiner is $\psi$-equivariant. 
\begin{proposition} 
\label{prop:webpsiinvariant}
Any monomial web in $\mathcal{S}p(\mathrm{SL}_N)$ is $\psi$-equivariant. In particular, any monomial web in 
$W(\vec{k},N)$ is $\psi$-invariant, for any $m=N\ell$ and 
$\vec{k}\in \Lambda(m,m)_N$. 
\end{proposition}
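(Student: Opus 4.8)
The plan is to prove the first assertion by reducing to the generating intertwiners and inducting on the way a monomial web is built up, and then to deduce the ``in particular'' by evaluating on a highest-weight vector. Every monomial web in $\mathcal{S}p(\mathrm{SL}_N)$ is obtained from the elementary webs of Definition~\ref{def:webrelations} --- the cups and caps, $M_{a,b}$, $M'_{a,b}$, $D_a$, and their mirror and arrow-reversed variants --- by composition and horizontal juxtaposition. Since $\Gamma_N$ is a functor of pivotal categories (Theorem~\ref{thm:ckm}), it suffices to show that (i) each generating intertwiner is $\psi$-equivariant and (ii) $\psi$-equivariance is closed under composition and tensor product. For (ii), composition is immediate. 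For the tensor product, suppose $f\colon A\to A'$ and $g\colon B\to B'$ are $\psi$-equivariant intertwiners; using the inductive definition $\psi(x\otimes y)=\overline{\Theta}(\psi(x)\otimes\psi(y))$ from Section~\ref{sec:canonical}, one gets for $x\in A$, $y\in B$ that $\psi((f\otimes g)(x\otimes y))=\overline{\Theta}(f\psi(x)\otimes g\psi(y))=(f\otimes g)\overline{\Theta}(\psi(x)\otimes\psi(y))=(f\otimes g)\psi(x\otimes y)$, where the middle step holds because $\overline{\Theta}$ lies in a completion of ${\mathbf U}_q(\mathfrak{sl}_N)\widehat{\otimes}{\mathbf U}_q(\mathfrak{sl}_N)$ and $f\otimes g$, being a map of modules over that algebra, commutes with its action.

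For (i), fix a generator $f\colon A\to B$. In every case $\mathrm{Hom}_{{\mathbf U}_q(\mathfrak{sl}_N)}(A,B)$ is one-dimensional, spanned by $f$ (this is the multiplicity-one fact already used in, e.g., Lemma~\ref{lem:dualcanex}). Because each bar-involution satisfies $\psi(Xz)=\overline{X}\psi(z)$ for $X\in{\mathbf U}_q(\mathfrak{sl}_N)$, the conjugate $g\mapsto\psi_B\circ g\circ\psi_A^{-1}$ sends $\mathrm{Hom}(A,B)$ into itself: the result is $\C(q)$-linear ($q$-antilinear composed with $\C(q)$-linear composed with $q$-antilinear) and it is an intertwiner by a direct check using only that compatibility. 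This conjugation is a $q$-antilinear involution of the one-dimensional hom-space, so $\psi_B\circ f\circ\psi_A^{-1}=c\,f$ with $c\overline c=1$, and hence $f$ is $\psi$-equivariant up to a scalar of norm one. To see the scalar is exactly $1$ for the normalization fixed in Section~\ref{sec:fundreps}, evaluate on one well-chosen standard tensor. For $M_{a,b}$, take $x_S\otimes x_T$ with $S=\{1,\dots,a\}$, a highest-weight vector of $\Lambda^a_q(\C^N_q)$, so $u^+x_S=0$ for every non-scalar $u^+\in{\mathbf U}_q(\mathfrak{sl}_N)^+$, and $T=\{a+1,\dots,a+b\}$; then $\overline{\Theta}$ acts as the identity on $x_S\otimes x_T$, so $\psi(x_S\otimes x_T)=x_S\otimes x_T$, and since $\ell(T,S)=0$ both $M_{a,b}$ and its conjugate send this tensor to $x_{S\cup T}$, forcing $c=1$. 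The evaluations for $M'_{a,b}$, $D_a$, the cups and caps, and the reversed variants are of the same type, choosing in each a highest- or lowest-weight standard tensor where the quasi-$R$-matrix and the pairings contribute only a power $v^{0}=1$; it is precisely here that the choice $v=-q^{-1}$ matters, since with the Cautis--Kamnitzer--Morrison normalization one would instead obtain a nontrivial power of $q$ and the maps would be $\psi$-equivariant only up to scalar.

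The ``in particular'' then follows at once. Under $\Gamma_N$ the source object $(N^{\ell})$ goes to $\Lambda^N_q(\C^N_q)^{\otimes\ell}$, a one-dimensional trivial representation; the quasi-$R$-matrix acts as $1\otimes 1$ on tensor products of trivial representations, so $\psi$ fixes the generator of $\Lambda^N_q(\C^N_q)^{\otimes\ell}$. A monomial web $u\in W(\vec{k},N)=\mathrm{Hom}((N^{\ell}),\vec{k})$ is identified, via $\Gamma_N$, with $\Gamma_N(u)$ applied to that generator, so $\psi(u)=\psi\bigl(\Gamma_N(u)(\mathrm{gen})\bigr)=\Gamma_N(u)\bigl(\psi(\mathrm{gen})\bigr)=\Gamma_N(u)(\mathrm{gen})=u$, i.e.\ $u$ is $\psi$-invariant.

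I expect the main obstacle to be the careful part of (i): making the conjugation argument precise requires that $\psi$ be compatible with the module structure on \emph{all} the representations occurring, in particular on the duals $(\Lambda^a_q(\C^N_q))^*$ that appear in $D_a$ and in the cups and caps, and then correctly bookkeeping the normalizing scalar, which is where the passage to $v=-q^{-1}$ is essential and where a sign slip would be easy. The tensor-product step in (ii), though short, is the other delicate point, since it is the only place where the completed quasi-$R$-matrix genuinely intervenes.
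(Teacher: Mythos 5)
Your proof is correct in its conclusion, but it takes a genuinely different route from the paper's. The paper does not check $\psi$-equivariance of the generating intertwiners $M_{a,b}$, $M'_{a,b}$, $c_a$, $p_a$, $D_a$ individually. Instead, it builds a small collection of \emph{cup-type} webs (tagged cups, $Y$-trees, etc.\ with source the trivial object $(N)$ or $(N,N)$), shows via Lemma~\ref{lem:dualcanex} that these are dual canonical basis elements --- hence $\psi$-invariant as a black box from Theorem~\ref{thm:KLdualcan} --- and then propagates equivariance to caps via the zig-zag relations and to arbitrary webs by gluing. You instead pin down each generator directly by the ``conjugation by $\psi$ is a $q$-antilinear involution of a one-dimensional hom-space'' trick, reducing to a scalar of norm one, and then kill the scalar by evaluating on a carefully chosen highest- or lowest-weight standard tensor. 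What your route buys is independence from the dual canonical basis machinery and a cleaner articulation of the closure properties: you make explicit, via the quasi-$R$-matrix commuting with $\U{N}\otimes\U{N}$-intertwiners, exactly why composition and horizontal juxtaposition preserve $\psi$-equivariance, whereas the paper asserts this as ``gluing'' without elaboration. What the paper's route buys is that it never has to open up $\overline\Theta$ at all --- dual canonicity absorbs all of the normalization bookkeeping into an already-proved theorem, and the only computations are with the $v$-powers $\ell(T,S)$ --- so the sign and normalization issues you flag as delicate simply do not arise there.

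Two small cautions on your sketch. First, for $M'_{a,b}$ the obvious evaluation at the highest-weight vector of $\Lambda^{a+b}$ does \emph{not} immediately show the scalar is $1$, because the coefficient you would want to read off (of the term with highest-weight $\Lambda^a$-factor) can receive contributions from $\overline\Theta$ acting on lower terms; you must instead look at the coefficient of the \emph{lowest}-weight product tensor, where both $\overline\Theta$ and the $\ell$-count contribute $v^0$, as your phrase ``highest- or lowest-weight'' correctly allows. (Alternatively one can bootstrap: since $M_{a,b}\circ M'_{a,b}=\bigl[\begin{smallmatrix}a+b\\a\end{smallmatrix}\bigr]_v\cdot\mathrm{id}$ with bar-invariant coefficient and $M_{a,b}$ already pinned, the scalar for $M'_{a,b}$ is forced.) Second, the treatment of $\psi$ on the duals $(\Lambda^a_q(\C^N_q))^*$ and on the pairings $c_a, p_a$ is genuinely underdetermined by the text in Section~\ref{sec:canonical}; the paper itself finesses it with ``by definition, $D_a$ is $\psi$-equivariant,'' in effect defining $\psi$ on $(\Lambda^{N-a})^*$ by transport of structure along $D_a$. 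If you adopt the same convention, your conjugation argument goes through verbatim for those generators, but you should say so explicitly, since otherwise the very first step --- that $\psi_B\circ g\circ\psi_A^{-1}$ is again an intertwiner --- has no content for $B=(\Lambda^a)^*$.
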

\begin{proof}
By Lemma~\ref{lem:dualcanex}, the proposition holds true for the following 
webs (the $\pm$-signs appear because the pictures do not specify whether 
left or right tags are used at the bi-valent vertices):
\vskip0.5cm
\begin{eqnarray*}
\txt{\unitlength 0.1in
\begin{picture}(  6.0000,  6.0000)(  1.0000, -7.0000)
\special{pn 8}%
\special{pa 400 700}%
\special{pa 400 500}%
\special{dt 0.045}%
\special{sh 1}%
\special{pa 400 500}%
\special{pa 380 568}%
\special{pa 400 554}%
\special{pa 420 568}%
\special{pa 400 500}%
\special{fp}%
\special{pn 8}%
\special{pa 400 500}%
\special{pa 600 200}%
\special{fp}%
\special{sh 1}%
\special{pa 600 200}%
\special{pa 546 244}%
\special{pa 570 244}%
\special{pa 580 268}%
\special{pa 600 200}%
\special{fp}%
\special{pn 8}%
\special{pa 400 500}%
\special{pa 200 200}%
\special{fp}%
\special{sh 1}%
\special{pa 300 350}%
\special{pa 320 418}%
\special{pa 330 394}%
\special{pa 354 394}%
\special{pa 300 350}%
\special{fp}%
\special{pn 4}%
\special{sh 1}%
\special{pa 300 350}%
\special{pa 280 284}%
\special{pa 270 306}%
\special{pa 246 306}%
\special{pa 300 350}%
\special{fp}%
\put(3.0000,-6.50000){\makebox(0,0){${}_{N}$}}%
\put(1.50000,-2.0000){\makebox(0,0){${}_{a}$}}%
\put(6.50000,-2.0000){\makebox(0,0){${}_{a}$}}%
\end{picture}
\txt{\unitlength 0.1in
\begin{picture}(  6.0000,  6.0000)(  1.0000, -7.0000)
\special{pn 8}%
\special{pa 400 700}%
\special{pa 400 500}%
\special{dt 0.045}%
\special{sh 1}%
\special{pa 400 500}%
\special{pa 380 568}%
\special{pa 400 554}%
\special{pa 420 568}%
\special{pa 400 500}%
\special{fp}%
\special{pn 8}%
\special{pa 400 500}%
\special{pa 600 200}%
\special{fp}%
\special{sh 1}%
\special{pa 500 350}%
\special{pa 446 394}%
\special{pa 470 394}%
\special{pa 480 418}%
\special{pa 500 350}%
\special{fp}%
\special{pn 8}%
\special{pa 400 500}%
\special{pa 200 200}%
\special{fp}%
\special{sh 1}%
\special{pa 200 200}%
\special{pa 220 268}%
\special{pa 230 244}%
\special{pa 254 244}%
\special{pa 200 200}%
\special{fp}%
\special{sh 1}%
\special{pa 500 350}%
\special{pa 554 306}%
\special{pa 530 306}%
\special{pa 520 284}%
\special{pa 500 350}%
\special{fp}%
\put(3.0000,-6.50000){\makebox(0,0){${}_{N}$}}%
\put(1.50000,-2.0000){\makebox(0,0){${}_{a}$}}%
\put(6.50000,-2.0000){\makebox(0,0){${}_{a}$}}%
\end{picture}
\txt{\unitlength 0.1in
\begin{picture}(  8.0000,  8.0000)(  1.0000, -9.0000)
\special{pn 8}%
\special{pa 400 400}%
\special{pa 200 200}%
\special{fp}%
\special{sh 1}%
\special{pa 200 200}%
\special{pa 234 262}%
\special{pa 238 238}%
\special{pa 262 234}%
\special{pa 200 200}%
\special{fp}%
\special{pn 8}%
\special{pa 400 400}%
\special{pa 600 200}%
\special{fp}%
\special{sh 1}%
\special{pa 600 200}%
\special{pa 540 234}%
\special{pa 562 238}%
\special{pa 568 262}%
\special{pa 600 200}%
\special{fp}%
\special{pn 8}%
\special{pa 600 600}%
\special{pa 400 400}%
\special{fp}%
\special{sh 1}%
\special{pa 400 400}%
\special{pa 434 462}%
\special{pa 438 438}%
\special{pa 462 434}%
\special{pa 400 400}%
\special{fp}%
\special{pn 8}%
\special{pa 600 600}%
\special{pa 800 400}%
\special{fp}%
\special{pn 8}%
\special{pa 800 400}%
\special{pa 800 200}%
\special{fp}%
\special{sh 1}%
\special{pa 800 200}%
\special{pa 780 268}%
\special{pa 800 254}%
\special{pa 820 268}%
\special{pa 800 200}%
\special{fp}%
\special{pn 8}%
\special{pa 600 800}%
\special{pa 600 600}%
\special{dt 0.045}%
\special{sh 1}%
\special{pa 600 600}%
\special{pa 580 668}%
\special{pa 600 654}%
\special{pa 620 668}%
\special{pa 600 600}%
\special{fp}%
\put(5.0000,-7.50000){\makebox(0,0){${}_{N}$}}%
\put(2.0000,-1.50000){\makebox(0,0){${}_{a}$}}%
\put(6.0000,-1.50000){\makebox(0,0){${}_{b}$}}%
\put(10.50000,-1.50000){\makebox(0,0){${}_{c=N-a-b}$}}%
\end{picture}
\txt{\unitlength 0.1in
\begin{picture}( 10.0000,  8.0000)(  1.0000, -9.0000)
\special{pn 8}%
\special{pa 400 800}%
\special{pa 400 600}%
\special{dt 0.045}%
\special{sh 1}%
\special{pa 400 600}%
\special{pa 380 668}%
\special{pa 400 654}%
\special{pa 420 668}%
\special{pa 400 600}%
\special{fp}%
\special{pn 8}%
\special{pa 400 600}%
\special{pa 200 400}%
\special{fp}%
\special{sh 1}%
\special{pa 200 400}%
\special{pa 234 462}%
\special{pa 238 438}%
\special{pa 262 434}%
\special{pa 200 400}%
\special{fp}%
\special{pn 8}%
\special{pa 200 200}%
\special{pa 200 400}%
\special{fp}%
\special{sh 1}%
\special{pa 200 400}%
\special{pa 220 334}%
\special{pa 200 348}%
\special{pa 180 334}%
\special{pa 200 400}%
\special{fp}%
\special{pn 8}%
\special{pa 400 600}%
\special{pa 600 400}%
\special{fp}%
\special{sh 1}%
\special{pa 600 400}%
\special{pa 540 434}%
\special{pa 562 438}%
\special{pa 568 462}%
\special{pa 600 400}%
\special{fp}%
\special{pn 8}%
\special{pa 800 600}%
\special{pa 600 400}%
\special{fp}%
\special{sh 1}%
\special{pa 600 400}%
\special{pa 634 462}%
\special{pa 638 438}%
\special{pa 662 434}%
\special{pa 600 400}%
\special{fp}%
\special{pn 8}%
\special{pa 600 400}%
\special{pa 600 300}%
\special{fp}%
\special{sh 1}%
\special{pa 600 300}%
\special{pa 580 368}%
\special{pa 600 354}%
\special{pa 620 368}%
\special{pa 600 300}%
\special{fp}%
\special{pn 8}%
\special{pa 600 200}%
\special{pa 600 300}%
\special{fp}%
\special{sh 1}%
\special{pa 600 300}%
\special{pa 620 234}%
\special{pa 600 248}%
\special{pa 580 234}%
\special{pa 600 300}%
\special{fp}%
\special{pn 8}%
\special{pa 800 600}%
\special{pa 1000 400}%
\special{fp}%
\special{sh 1}%
\special{pa 1000 400}%
\special{pa 940 434}%
\special{pa 962 438}%
\special{pa 968 462}%
\special{pa 1000 400}%
\special{fp}%
\special{pn 8}%
\special{pa 1000 200}%
\special{pa 1000 400}%
\special{fp}%
\special{sh 1}%
\special{pa 1000 400}%
\special{pa 1020 334}%
\special{pa 1000 348}%
\special{pa 980 334}%
\special{pa 1000 400}%
\special{fp}%
\special{pn 8}%
\special{pa 800 800}%
\special{pa 800 600}%
\special{dt 0.045}%
\special{sh 1}%
\special{pa 800 600}%
\special{pa 780 668}%
\special{pa 800 654}%
\special{pa 820 668}%
\special{pa 800 600}%
\special{fp}%
\special{pn 4}%
\put(7.0000,-7.50000){\makebox(0,0){${}_{N}$}}%
\put(3.0000,-7.50000){\makebox(0,0){${}_{N}$}}%
\put(2.0000,-1.50000){\makebox(0,0){${}_{a}$}}%
\put(6.0000,-1.50000){\makebox(0,0){${}_{b}$}}%
\put(12.50000,-1.50000){\makebox(0,0){${}_{c=N-a-b}$}}%
\end{picture}
\end{eqnarray*}

By the zig-zag relations for cups and caps, the $\psi$-invariance of 
the cups implies the $\psi$-equivariance of the caps. 

By definition, the web 
$$
\txt{\unitlength 0.1in
\begin{picture}(  4.0000,  7.0000)(  2.0000,-10.0000)
\put(3.5000,-9.0000){\makebox(0,0){${}_{a}$}}%
\special{pn 8}%
\special{pa 200 1000}%
\special{pa 400 650}%
\special{fp}%
\special{sh 1}%
\special{pa 300 825}
\special{pa 250 873}
\special{pa 274 871}
\special{pa 284 893}
\special{pa 300 825}%
\special{fp}%
\special{pn 8}%
\special{pa 600 300}%
\special{pa 400 650}%
\special{fp}%
\special{pn 8}%
\special{pa 330 610}%
\special{pa 400 650}%
\special{fp}%
\special{sh 1}%
\special{pa 500 475}%
\special{pa 550 427}%
\special{pa 526 429}%
\special{pa 516 407}%
\special{pa 500 475}%
\special{fp}%
\put(3.250000,-4.0000){\makebox(0,0){${}_{N-a}$}}%
\end{picture}
$$ 
is also $\psi$-equivariant. 

Up to a sign, any monomial web can be obtained by gluing instances 
of the above ones, so we see that any monomial web is $\psi$-equivariant. 
\end{proof}

\begin{remark}
\label{rem:normalization}
For Lemma~\ref{lem:dualcanex} and Proposition~\ref{prop:webpsiinvariant} to 
be true, the generating intertwiners in the image of 
$\Gamma_N$ have to be normalized as in Section~\ref{sec:fundreps}. 
\end{remark}

\subsection{The canonical and the LT-bases and their Howe duals}
\label{sec:webbasis}

In this section I recall the canonical basis and Leclerc and Toffin's 
intermediate basis of $V_{\Lambda}$. The latter basis 
gives rise to a nice basis of $W(\vec{k},N)$ by quantum skew Howe duality. 
Finally I will show that one can identify the canonical $\Uv{m}$-basis 
and the dual canonical $\U{N}$-basis of $W_{\Lambda}$. 
\vskip0.5cm
Let us first briefly recall the {\em canonical basis} of 
the irreducible $\Uv{m}$-representation $V_{\Lambda}$. For more details 
the reader can consult~\cite{bk2,bs3,lu} for example. 

As was already remarked above, $V_{\Lambda}$ is isomorphic to a direct summand of 
$\Lambda^{\ell}_v(\C_v^m)^{\otimes N}$. 
The highest weight vector corresponds to 
$$v_{\Lambda}:=x_{T_{\Lambda}}=
x_{\{12\ldots \ell\}}\otimes \cdots\otimes x_{\{12\ldots \ell\}}\in 
\Lambda^{\ell}_v(\C_v^m)^{\otimes N}.$$ 

There is a unique $v$-antilinear bar-involution on $V_{\Lambda}$ determined 
by the conditions
\begin{gather*}
\widetilde{\psi}(v_{\Lambda})=v_{\Lambda}\\
\widetilde{\psi}(Xv)=\overline{X} \tilde{\psi}(v),
\end{gather*}
for any $X\in\Uv{m}$ and $v\in V_{\Lambda}$. 
\vskip0.5cm

Just as the dual canonical basis of $W_{\Lambda}$, the canonical 
basis of $V_{\Lambda}$ is parametrized by $\mathrm{Std}^{(N^{\ell})}$. 
For a proof of the following theorem see~\cite{lt} 
and the references therein. Note that our coproduct differs from that in~\cite{lt}, so we get the negative exponent property rather than the positive one.  

\begin{theorem}[Kashiwara, Lusztig]
\label{thm:KLcan}
For each $T\in\mathrm{Std}^{(N^{\ell})}$, there exists a unique element $b_T$ 
such that 
\begin{eqnarray*}
\tilde{\psi}(b_T)&=&b_T\\
b_T&=&x_T+\sum_{T'\prec T} d_{\tau,T}(v) x_{\tau}
\end{eqnarray*}
with $\tau\in \mathrm{Col}^{(N^{\ell})}$ and $d_{\tau,T}(v)\in v^{-1}\Z[v^{-1}]$. The 
elements $b_T$ form the {\bf canonical basis} of $V_{\Lambda}$. 
\end{theorem}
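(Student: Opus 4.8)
The plan is to deduce Theorem~\ref{thm:KLcan} from the general Kashiwara--Lusztig ``bar-invariant basis'' lemma, once the bar-involution $\widetilde{\psi}$ has been shown to act unitriangularly (with respect to $\prec$) on a suitable basis of each weight space of $V_{\Lambda}$.

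First I would fix the ambient picture. Realize $V_{\Lambda}$ as the $\Uv{m}$-submodule of $\Lambda^{\ell}_v(\C_v^m)^{\otimes N}$ generated by $v_{\Lambda}=x_{T_{\Lambda}}$, and note that the tensor space carries the bar-involution built inductively from $\overline{\Theta}$ exactly as in Section~\ref{sec:canonical}. Since the weight space of $\Lambda^{\ell}_v(\C_v^m)^{\otimes N}$ of weight $\Lambda$ is one-dimensional (the only column-strict tableau of shape $(N^{\ell})$ with content $(N^{\ell})$ being $T_{\Lambda}$ itself), this bar-involution fixes $v_{\Lambda}$; together with $\psi(Xz)=\overline{X}\psi(z)$ this shows it preserves $V_{\Lambda}$, and its restriction is the $\widetilde{\psi}$ of the theorem (it is an involution by~\eqref{eq:Rinvertible}). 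I would then recall the classical fact that, for $\vec{k}=\phi_{m,m,N}(\mu)$, the set $\{x_T\mid T\in\mathrm{Std}^{(N^{\ell})}_{\vec{k}}\}$ is a basis of the weight space $V_{\Lambda}[\mu]$.

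The core of the argument is to prove that $\widetilde{\psi}$ is unitriangular on this basis, i.e. $\widetilde{\psi}(x_T)=x_T+\sum_{\tau\prec T} r_{\tau,T}(v)\,x_{\tau}$ with $r_{\tau,T}(v)\in\Z[v,v^{-1}]$. Here I would follow Leclerc--Toffin: to each semistandard $T$ attach, via their reading algorithm, a word of divided powers $E_{-i_1}^{(a_1)}\cdots E_{-i_r}^{(a_r)}$ and set $A(T):=E_{-i_1}^{(a_1)}\cdots E_{-i_r}^{(a_r)}\,v_{\Lambda}\in V_{\Lambda}$. Using the explicit combinatorial description of the $\Uv{m}$-action on $\Lambda^{\ell}_v(\C_v^m)^{\otimes N}$ recalled just before Proposition~\ref{prop:vqHowe} (the formulas for $E_{\pm i}x_T$ in terms of tableaux), one checks that $A(T)=x_T+\sum_{\tau\prec T} c_{\tau,T}(v)\,x_{\tau}$ over column-strict $\tau$ of weight $\vec{k}$; hence $\{A(T)\mid T\in\mathrm{Std}^{(N^{\ell})}_{\vec{k}}\}$ is again a basis of $V_{\Lambda}[\mu]$, unitriangular to $\{x_T\}$. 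Because every $E_{-i}^{(a)}$ is bar-invariant in $\Uv{m}$ and $v_{\Lambda}$ is $\widetilde{\psi}$-invariant, each $A(T)$ is $\widetilde{\psi}$-invariant; expanding $\widetilde{\psi}(x_T)$ in the $A$-basis and using the unitriangular change of basis back to $\{x_T\}$ yields the required $r_{\tau,T}\in\Z[v,v^{-1}]$.

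Finally I would invoke the standard Kashiwara--Lusztig lemma (see~\cite{lu}): for a finite poset-indexed module basis over $\Z[v,v^{-1}]$ and a semilinear involution that is unitriangular with $\Z[v,v^{-1}]$-entries, there is a unique family $b_T=x_T+\sum_{\tau\prec T} d_{\tau,T}(v)\,x_{\tau}$ with $d_{\tau,T}(v)\in v^{-1}\Z[v^{-1}]$ and $\widetilde{\psi}(b_T)=b_T$; one proceeds by induction along $\prec$, at each step solving an equation of the form $\overline{d_{\tau,T}}-d_{\tau,T}=(\text{a known bar-anti-invariant Laurent polynomial})$, which has a unique solution in $v^{-1}\Z[v^{-1}]$. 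These $b_T$ are the canonical basis. I expect the main obstacle to be the middle step: verifying that the Leclerc--Toffin divided-power word genuinely produces a vector unitriangular (not merely triangular) to $x_T$ for $\prec$, which needs careful bookkeeping with the explicit action formulas; one must also track throughout the substitution $v=-q^{-1}$, which is precisely why the statement is phrased on the $v$-side and yields the negative-exponent rather than the positive-exponent normalization of~\cite{lt}.
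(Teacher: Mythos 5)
The paper does not give its own proof of Theorem~\ref{thm:KLcan}: it states the result as due to Kashiwara and Lusztig and refers to \cite{lt} and the references therein, and your sketch follows exactly that route --- the Leclerc--Toffin intermediate vectors $A(T)$, their bar-invariance and $\Z[v,v^{-1}]$-unitriangularity with respect to $\prec$, and then the standard Kashiwara--Lusztig inductive construction producing the unique bar-invariant family with coefficients in $v^{-1}\Z[v^{-1}]$. So in outline the proposal is correct and matches the implicitly cited argument, including the remarks about the $v=-q^{-1}$ substitution converting the positive- to the negative-exponent normalization.

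One small but real imprecision: you assert that $\{x_T \mid T\in\mathrm{Std}^{(N^{\ell})}_{\vec{k}}\}$ is a basis of the weight space $V_{\Lambda}[\mu]$. These vectors live in $\Lambda^{\ell}_v(\C_v^m)^{\otimes N}$ and are in general \emph{not} elements of the submodule $V_{\Lambda}$ at all. What is true is that $|\mathrm{Std}^{(N^{\ell})}_{\vec{k}}|=\dim V_{\Lambda}[\mu]$, and that it is the vectors $A(T)$ (equivalently the $b_T$ themselves), not the $x_T$, that constitute a basis of $V_{\Lambda}[\mu]$; they happen to be unitriangular to $\{x_\tau \mid \tau\in\mathrm{Col}^{(N^{\ell})}_{\vec{k}}\}$ in the ambient tensor weight space. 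Since you immediately pass to the $A(T)$ for the core argument this slip does not derail the proof, but the sentence should be reworded accordingly, and the unitriangularity of $\widetilde\psi$ should be stated on the $A(T)$-basis (or on the ambient $x_\tau$-basis of the tensor weight space) rather than on a ``basis $\{x_T : T\ \text{semistandard}\}$ of $V_{\Lambda}[\mu]$'' that does not exist.
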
 

As before, we have a symmetric $v$-bilinear form $(\cdot,\cdot)$ on 
$\Lambda^{\ell}_v(\C_v^m)^{\otimes N}$ determined by 
$$(x_{\tau},x_{\tau'})=\delta_{\tau,\tau'}$$
for all $\tau,\tau'\in \mathrm{Col}^{(N^{\ell})}$. Lemmas 17.1.3 and 26.2.2 
in~\cite{lu} show that the restriction of this 
form to $V_{\Lambda}$ is the unique symmetric $v$-bilinear 
form on $V_{\Lambda}$ satisfying 
\begin{enumerate}
\item $(v_{\Lambda},v_{\Lambda})=1$;
\item $\langle X v, v' \rangle=\langle v,\rho(X) v'\rangle$, for any 
$X\in \Uv{m}$ and any $v,v'\in V_{\Lambda}$.
\end{enumerate}
Here $\rho$ is the $v$-linear anti-involution on $\Uv{m}$ defined in 
Section 19.1 in~\cite{lu}. 

One can easily check that 
$$\tau(X)=\overline{\rho(X)}$$
for any $X\in\Uv{m}$, so the relation between $(\cdot,\cdot)$ and 
the $v$-Shapovalov form is given by 
$$(\cdot,\cdot):=\overline{\langle \cdot,\widetilde{\psi}(\cdot)\rangle}.$$

Note that the negative exponent property in the above theorem is equivalent 
to the condition
$$
(b_{T},b_{T'})\in \delta_{T,T'}+v^{-1}\Z[v^{-1}].
$$
Since the canonical basis elements are $\widetilde{\psi}$-invariant, this 
is also equivalent to  
\begin{equation}
\label{eq:Shap}
\langle b_{T},b_{T'}\rangle=\overline{(b_{T},b_{T'})}\in 
\delta_{T,T'}+v\Z[v].
\end{equation}

\vskip0.5cm
Leclerc and Toffin~\cite{lt} (Section 4.1) defined a different basis of 
$V_{\Lambda}$, which I denote 
by $B_{\Lambda}$. The elements of $B_{\Lambda}$ 
are also parametrized by the elements in $\mathrm{Std}^{(N^{\ell})}$. 

Suppose $T\in \mathrm{Std}^{(N^{\ell})}$ is arbitrary. 
Let us recall how to construct 
the Leclerc-Toffin (LT) basis element $A_T\in B_{\Lambda}$. 
Let $1\leq i_1\leq \ell$ be the smallest integer such that the rows of 
$T=T_1$ with row number $\leq i_1$ contain entries equal to $i_1+1$. Denote 
the total number of such entries by $r_1>0$. Change all these entries to 
$i_1$ and denote the new semi-standard tableau 
by $T_2\in\mathrm{Std}^{(N^{\ell})}$. 
Let $1\leq i_2\leq \ell$ be the smallest integer such that the 
rows of $T_2$ with row number $\leq i_2$ contain 
$r_2>0$ entries equal to $i_2+1$. Then change these entries to $i_2$ and 
denote the new tableau by $T_3\in \mathrm{Std}^{(N^{\ell})}$. 
Continue this way until $T_s=T_{\Lambda}\in \mathrm{Std}^{(N^{\ell})}$. 
Leclerc and Toffin define the basis element $A_T$ as 
\begin{equation}
\label{eq:LTdef}
A_T:=E_{-i_1}^{(r_1)}\cdots E_{-i_s}^{(r_s)}v_{\Lambda}.
\end{equation}

By definition, we have
\begin{equation}
\label{eq:LTtildepsi-inv}
\tilde{\psi}(A_T)=A_T,
\end{equation}
for any $T\in \mathrm{Std}^{(N^{\ell})}$. 

Using the coproduct of ${\mathbf U}_v(\mathfrak{sl}_m)$, 
it is easy to work out the expansion of 
$A_T$ on the standard basis of $\Lambda_v^{\ell}(\C_v^m)^{\otimes N}$. 
In Lemma 9 in~\cite{lt}, 
Leclerc and Toffin showed that 
\begin{equation}
\label{eq:LT}
A_T=x_T+\sum_{\tau\prec T} \alpha_{\tau T}(v) x_{\tau},
\end{equation}
with $\tau\in \mathrm{Col}^{(N^{\ell})}$ and certain coefficients 
$\alpha_{\tau T}(v)\in \N[v,v^{-1}]$. For any 
$\tau\in\mathrm{Col}^{(N^{\ell})}$ and $T\in\mathrm{Std}^{(N^{\ell})}$, the 
coefficient $\alpha_{\tau T}(v)=0$ if $\tau$ and $T$ do not have the 
same $\mathfrak{sl}_m$-weight.
  
Leclerc and Toffin gave an algorithm to compute the canonical basis of 
$V_{\Lambda}$ which uses $B_{\Lambda}$ as an intermediate basis. In Section 
4.2 they showed that 
\begin{equation}
\label{eq:LT2}
b_T=A_T + \sum_{S \prec T } \beta_{ST}(v) A_S,
\end{equation}
with $S\in \mathrm{Std}^{(N^{\ell})}$ 
for certain bar-invariant 
coefficients $\beta_{ST}(v)\in \mathbb{Z}[v,v^{-1}]$. For 
any $S,T\in \mathrm{Std}^{(N^{\ell})}$, the coefficient $\beta_{ST}(v)=0$ 
if $S$ and $T$ do not have the same $\mathfrak{sl}_m$-weight. 
\vskip0.5cm

By Corollary~\ref{cor:webirrep} we see that the quantum 
skew Howe duality functor in Proposition~\ref{prop:CKM} 
maps the basis $B_{\Lambda}$ of $V_{\Lambda}$ to a web basis of $W_{\Lambda}$. 
The elements of this web basis are defined in exactly 
the same way as those of $B_{\Lambda}$ by letting 
the divided powers of $\Uv{m}$ act on the highest weight web $w_{\Lambda}$. 
We call this web basis the {\em LT-web basis} and denote it by ${BW}_{\Lambda}$. 
We denote the element in ${BW}_{\Lambda}$ associated to a tableau 
$T\in\mathrm{Std}^{(N^{\ell})}$ by $A^T$. 

For any $T\in \mathrm{Std}^{(N^{\ell})}$, we have 
\begin{equation}
\label{eq:LTpsi-inv}
\psi(A^T)=A^T
\end{equation}
by Proposition~\ref{prop:webpsiinvariant}.

Using the functor $\Gamma_N\colon \mathcal{S}p(\mathrm{SL}_N)\to 
\mathcal{R}ep(\mathrm{SL}_N)$ in Theorem~\ref{thm:ckm}, we can write every 
basis web $A^T$ as a linear combination of standard basis elements 
in $\Lambda_q^{\bullet}(\C_q^N)^{\otimes m}$. 
The following proposition is the analogue of Theorem 2 in~\cite{kk} and 
follows immediately from~\eqref{eq:LT} and 
Proposition~\ref{prop:vqHowe} in this paper.   
\begin{proposition}
\label{prop:triangwebtensor}
For any $T\in\mathrm{Std}^{(N^{\ell})}$, we have 
$$A^T=x^T+\sum_{\tau \prec T}\alpha_{\tau ,T}(v) x^{\tau},$$ 
with $\tau\in \mathrm{Col}^{(N^{\ell})}$. 
\end{proposition}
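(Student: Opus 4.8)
The plan is to transport the Leclerc--Toffin expansion~\eqref{eq:LT} of $A_T$ to the web side by means of quantum skew Howe duality. Recall from~\eqref{eq:LTdef} that $A_T=E_{-i_1}^{(r_1)}\cdots E_{-i_s}^{(r_s)}v_{\Lambda}$, where the sequence $(i_1,r_1),\ldots,(i_s,r_s)$ depends only on $T$, and that by definition $A^T$ is obtained by applying this very same product of divided powers of $\Uv{m}$ to the highest weight web $w_{\Lambda}$. First I would note that, composing the functor of Proposition~\ref{prop:CKM} with $\Gamma_N$ (Theorem~\ref{thm:ckm}) and the isomorphism of Remark~\ref{rem:VversusW}, the web $w_{\Lambda}$ is identified with the highest-weight tensor $x^{T_{\Lambda}}\in\Lambda_q^{\bullet}(\C_q^N)^{\otimes m}$, just as $v_{\Lambda}$ is identified with $x_{T_{\Lambda}}\in\Lambda_v^{\ell}(\C_v^m)^{\otimes N}$; both highest weight vectors carry the label of the same tableau $T_{\Lambda}$ of Example~\ref{ex:highesttableau}.

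The key step is Proposition~\ref{prop:vqHowe}: with respect to the standard basis $\{x^{\tau}\}$, the generators $E_{\pm i}$ of $\Uv{m}$ act on $\bigoplus_{\vec{k}\in\Lambda(m,m)_N}\Lambda_q^{\vec{k}}(\C_q^N)^{\otimes m}$ by literally the same matrices as they act on $\Lambda_v^{\ell}(\C_v^m)^{\otimes N}$ with respect to its standard basis $\{x_{\tau}\}$ --- in both cases the entries are the powers $v^{-a_{\tau,\tau'}}$ and $v^{b_{\tau,\tau''}}$ attached to the same pairs of column-strict tableaux. Since the divided powers $E_{-i}^{(a)}=E_{-i}^{a}/[a]!$ are fixed polynomials in the $E_{-i}$, it follows that the LT monomial $E_{-i_1}^{(r_1)}\cdots E_{-i_s}^{(r_s)}$ also acts by one and the same matrix in the two bases. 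Here one should observe that the $\Uv{m}$-action on $W_{\Lambda}$ used to define $A^T$ coincides with the one in Proposition~\ref{prop:vqHowe}, since $W(\vec{k},N)$ sits inside $\Lambda_q^{\vec{k}}(\C_q^N)$ as the space of $\U{N}$-invariants and the $\Uv{m}$-action commutes with the $\U{N}$-action.

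Putting these together, the coordinate vector of $A^T=E_{-i_1}^{(r_1)}\cdots E_{-i_s}^{(r_s)}w_{\Lambda}$ in the basis $\{x^{\tau}\}$ must equal the coordinate vector of $A_T=E_{-i_1}^{(r_1)}\cdots E_{-i_s}^{(r_s)}v_{\Lambda}$ in the basis $\{x_{\tau}\}$, which by~\eqref{eq:LT} is $1$ at $\tau=T$ and $\alpha_{\tau T}(v)$ at each $\tau\prec T$. Hence
\begin{equation*}
A^T=x^T+\sum_{\tau\prec T}\alpha_{\tau T}(v)\,x^{\tau},\qquad \tau\in\mathrm{Col}^{(N^{\ell})}.
\end{equation*}
The hard part --- to the extent there is one --- is the matrix-matching of the second paragraph, but that is already the content of Proposition~\ref{prop:vqHowe} compared with the description of the standard action recalled just before it; once it is granted, the rest is purely formal, so the proposition follows immediately from~\eqref{eq:LT} and Proposition~\ref{prop:vqHowe}.
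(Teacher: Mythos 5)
Your proof is correct and fleshes out precisely what the paper leaves implicit: the paper's own proof is a one-sentence remark that the proposition "follows immediately from~\eqref{eq:LT} and Proposition~\ref{prop:vqHowe}," and your argument is exactly the intended unwinding of that sentence --- identifying $w_\Lambda$ with $x^{T_\Lambda}$, noting that by Proposition~\ref{prop:vqHowe} the $\Uv{m}$-generators (hence their divided powers and the LT monomial) have the same matrix coefficients in the tableau-indexed bases on both sides of skew Howe duality, and transporting the expansion~\eqref{eq:LT}. Same approach, just made explicit.
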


Note that, for any $\tau\in \mathrm{Col}^{(N^{\ell})}$ and 
$T\in \mathrm{Std}^{(N^{\ell})}$, we have 
$$\sum_{\tau \prec S\prec T}\alpha_{\tau S}(v)\beta_{ST}(v)=d_{\tau T}(v),$$
with $S\in \mathrm{Std}^{(N^{\ell})}$. By Theorems~\ref{thm:KLdualcan} 
and~\ref{thm:KLcan}, equation~\eqref{eq:LTpsi-inv} and 
Proposition~\ref{prop:triangwebtensor} we obtain 
$$d^{\tau T}(v)=d_{\tau T}(v),$$
for any $\tau\in \mathrm{Col}^{(N^{\ell})}$ and 
$T\in \mathrm{Std}^{(N^{\ell})}$, and the following result. 
\begin{proposition}
\label{prop:dualcanLT}
For any $T\in\mathrm{Std}^{(N^{\ell})}$, we have 
$$b^T=A^T+\sum_{S\prec T}\beta_{ST}(v)A^S,$$
with $S\in \mathrm{Std}^{(N^{\ell})}$.  
\end{proposition}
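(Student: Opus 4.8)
The plan is to exhibit the right-hand side as the dual canonical basis element directly, using the uniqueness in Theorem~\ref{thm:KLdualcan}. Concretely, set $\tilde{b}^T := A^T+\sum_{S\prec T}\beta_{ST}(v)A^S$, built out of exactly the same coefficients $\beta_{ST}(v)$ that appear in the canonical basis expansion~\eqref{eq:LT2}, and show that $\tilde{b}^T$ satisfies the two properties characterizing $b^T$. First I would observe that, since $\beta_{ST}(v)=0$ unless $S$ and $T$ have the same $\mathfrak{sl}_m$-weight, all webs $A^S$ occurring in $\tilde{b}^T$ lie in a single weight space, hence in $W(\vec{k},N)$ for the corresponding $\vec{k}\in\Lambda(m,m)_N$, so $\tilde{b}^T\in W(\vec{k},N)$.

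Next I would check $\psi$-invariance of $\tilde{b}^T$. Each $A^S$ is $\psi$-invariant by~\eqref{eq:LTpsi-inv} (which rests on Proposition~\ref{prop:webpsiinvariant}), the map $\psi$ is $v$-antilinear, and the coefficients $\beta_{ST}(v)$ are bar-invariant (as recorded just after~\eqref{eq:LT2}); therefore $\psi(\beta_{ST}(v)A^S)=\overline{\beta_{ST}(v)}\,\psi(A^S)=\beta_{ST}(v)A^S$, and summing over $S$ gives $\psi(\tilde{b}^T)=\tilde{b}^T$.

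Then I would establish the negative exponent property. Expanding each $A^S$ on the standard basis of $\Lambda_q^{\bullet}(\C_q^N)^{\otimes m}$ via Proposition~\ref{prop:triangwebtensor} yields $\tilde{b}^T=x^T+\sum_{\tau\prec T}\bigl(\sum_{S}\alpha_{\tau S}(v)\beta_{ST}(v)\bigr)x^{\tau}$, where $\tau$ runs over $\mathrm{Col}^{(N^{\ell})}_{\vec{k}}$ and the inner sum ranges over semi-standard $S$ with $\tau\preceq S\preceq T$ (using the conventions $\beta_{TT}(v)=\alpha_{TT}(v)=1$). The very same identity arises on the $V_{\Lambda}$ side by substituting~\eqref{eq:LT} into~\eqref{eq:LT2}, so the coefficient of $x^{\tau}$ equals $d_{\tau T}(v)$, which lies in $v^{-1}\Z[v^{-1}]$ by Theorem~\ref{thm:KLcan}. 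Hence $\tilde{b}^T=x^T+\sum_{\tau\prec T}d_{\tau T}(v)x^{\tau}$ with $d_{\tau T}(v)\in v^{-1}\Z[v^{-1}]$, so $\tilde{b}^T$ meets both conditions of Theorem~\ref{thm:KLdualcan} and therefore equals $b^T$; as a by-product one reads off $d^{\tau T}(v)=d_{\tau T}(v)$.

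The point to keep straight — rather than a genuine obstacle — is that the transition matrix from the LT-basis to the (dual) canonical basis is governed by \emph{identical} coefficients on the $V_{\Lambda}$ and $W_{\Lambda}$ sides. This is exactly what Propositions~\ref{prop:triangwebtensor} and~\ref{prop:webpsiinvariant} were arranged to guarantee: $A^T$ and $A_T$ have matching triangular expansions, and the web versions inherit bar-invariance, so~\eqref{eq:LT2} transports verbatim. With those in hand the proposition is essentially a corollary, the only care needed being the bookkeeping of which tableaux are column-strict versus semi-standard and the endpoint conventions in the triangular sums.
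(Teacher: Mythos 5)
Your proposal is correct and follows essentially the same route the paper takes: transport the $V_{\Lambda}$-side identity $\sum_{\tau\preceq S\preceq T}\alpha_{\tau S}(v)\beta_{ST}(v)=d_{\tau T}(v)$ (obtained by substituting~\eqref{eq:LT} into~\eqref{eq:LT2}) across quantum skew Howe duality via Proposition~\ref{prop:triangwebtensor}, use $\psi$-invariance of the $A^S$ from~\eqref{eq:LTpsi-inv} together with bar-invariance of the $\beta_{ST}(v)$, and invoke the uniqueness characterization in Theorem~\ref{thm:KLdualcan}. The paper states the conclusion $d^{\tau T}(v)=d_{\tau T}(v)$ a bit more tersely, but the underlying argument — verifying $\psi$-invariance and the negative exponent property for $A^T+\sum_{S\prec T}\beta_{ST}(v)A^S$ and appealing to uniqueness — is exactly what you have written.
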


\begin{corollary}
\label{cor:VversusW}
Under the isomorphism in Remark~\ref{rem:VversusW} we have 
$$A_T\mapsto A^T\quad\text{and}\quad b_T\mapsto b^T,$$
for all $T\in \mathrm{Std}^{(N^{\ell})}$.
\end{corollary}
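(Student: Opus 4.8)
The plan is to read off both assertions directly from the definitions of $A_T$, $A^T$ and of the isomorphism $V_{\Lambda}\to W_{\Lambda}$, using the triangular expansion already recorded in Proposition~\ref{prop:dualcanLT}.

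First I would handle the Leclerc--Toffin bases. By~\eqref{eq:LTdef} we have $A_T=E_{-i_1}^{(r_1)}\cdots E_{-i_s}^{(r_s)}v_{\Lambda}$, where the sequence $(i_1,r_1),\ldots,(i_s,r_s)$ is the one attached to $T$ by the Leclerc--Toffin recipe and hence is fixed once $T$ is fixed. By construction the LT-web element $A^T$ is obtained by letting this \emph{same} product of divided powers in $\Uv{m}$ act on the highest weight web $w_{\Lambda}$. Setting $X:=E_{-i_1}^{(r_1)}\cdots E_{-i_s}^{(r_s)}\in\Uv{m}$, we therefore have $A_T=Xv_{\Lambda}$ and $A^T=Xw_{\Lambda}$, so the isomorphism of Remark~\ref{rem:VversusW}, which by definition sends $Xv_{\Lambda}$ to $Xw_{\Lambda}$, sends $A_T$ to $A^T$. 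Well-definedness of that isomorphism is exactly what guarantees that $Xv_{\Lambda}$ depends only on the class of $X$ acting on $W_{\Lambda}$, so there is no ambiguity to check.

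For the (dual) canonical bases I would combine the two expansions over the LT-bases. Equation~\eqref{eq:LT2} reads $b_T=A_T+\sum_{S\prec T}\beta_{ST}(v)A_S$ in $V_{\Lambda}$, while Proposition~\ref{prop:dualcanLT} reads $b^T=A^T+\sum_{S\prec T}\beta_{ST}(v)A^S$ in $W_{\Lambda}$, with the identical coefficients $\beta_{ST}(v)$. Since the isomorphism is $\C(v)$-linear and already carries $A_S$ to $A^S$ for every $S\in\mathrm{Std}^{(N^{\ell})}$ by the previous paragraph, applying it to the first expansion and comparing with the second gives $b_T\mapsto b^T$.

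I do not expect a genuine obstacle here: the substantive work has already been done in establishing Proposition~\ref{prop:triangwebtensor} (the triangularity $A^T=x^T+\sum_{\tau\prec T}\alpha_{\tau T}(v)x^{\tau}$) and hence Proposition~\ref{prop:dualcanLT}, whose proof rested on the coincidence $d^{\tau T}(v)=d_{\tau T}(v)$ of the transition coefficients for the dual canonical and canonical bases. The only point to keep an eye on is that the Laurent polynomials $\beta_{ST}(v)$ occurring in~\eqref{eq:LT2} and in Proposition~\ref{prop:dualcanLT} really are the same --- which is precisely what Proposition~\ref{prop:dualcanLT} asserts --- after which $\C(v)$-linearity of the isomorphism takes care of everything that remains.
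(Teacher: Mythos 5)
Your proof is correct and matches the argument the paper leaves implicit: $A_T$ and $A^T$ are defined by applying the identical product of divided powers to $v_{\Lambda}$ and $w_{\Lambda}$ respectively, so the isomorphism of Remark~\ref{rem:VversusW} carries one to the other, and then the identical transition coefficients $\beta_{ST}(v)$ in~\eqref{eq:LT2} and Proposition~\ref{prop:dualcanLT} together with $\C(v)$-linearity give $b_T\mapsto b^T$.
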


%
%
%
%

\section{Web categories and algebras}
\label{sec:webalgebras}
Wu~\cite{wu} and independently Yonezawa~\cite{yo1,yo2} defined $(\Z/2\Z\times 
\Z)$-graded matrix factorizations associated to colored 
$\mathfrak{sl}_N$-webs, generalizing the ground breaking work of 
Khovanov and Rozanksy~\cite{kr}. The $\Z/2\Z$-grading is generally 
referred to as the 
{\em homological grading} and the $\Z$-grading as the {\em quantum grading}. 
A shift of $1$ in the homological grading is indicated by 
$\langle 1\rangle$ and in the quantum grading by $\{1\}$. 

In~\cite{my} Yonezawa and I recalled these matrix factorizations in 
great detail, so I refer to that paper and its references for the relevant 
background on graded matrix factorizations in general and 
the definitions of and the results on 
$\mathfrak{sl}_N$-web matrix factorizations in particular.  

All the reader of this paper has to know, is that 
to any monomial $\mathfrak{sl}_N$-web $u\in W(\vec{k},N)$ 
without tags (but with oriented $N$-colored edges) one can associate a 
matrix factorization $\hat{u}$ such that   
$$
\widehat{u^*}=\hat{u}_{\bullet}\{-d(\vec{k})\}\langle 1\rangle,
$$
where $\hat{u}_{\bullet}$ is the dual matrix factorization. As explained 
in Section 5 in~\cite{my}, this implies that 
\begin{equation}
\label{eq:ExtvH}
\mathrm{EXT}(\hat{u},\hat{v})\cong H(\hat{u}_{\bullet}\btime{R^{\vec{k}}}\hat{v})
\cong H(\widehat{u^*v})\{d(\vec{k})\}\langle 1\rangle 
\end{equation}
for any monomial webs $u,v\in W(\vec{k},N)$.

For the proof of the following theorem, which was recalled as 
Theorem 5.14 in~\cite{my}, I refer to Sections 6 through 11 
in~\cite{wu} and Section 3 in~\cite{yo1}.  
\begin{theorem}[Wu, Yonezawa]
\label{thm:catwebrels}
The matrix factorizations associated to webs without tags satisfy all 
relations in Definition~\ref{def:webrelations}, except the first one, up 
to homotopy equivalence. These equivalences are quantum degree preserving, 
but might involve homological degree shifts. 
\end{theorem}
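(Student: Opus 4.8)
The plan is to check the relations of Definition~\ref{def:webrelations} one at a time, the tag-switch relation \eqref{eq:tagswitch} being excluded exactly because it involves tags. The starting point is the explicit description, from \cite{wu,yo1}, of the matrix factorization $\hat u$ attached to each elementary web occurring in these relations --- the merge $M_{a,b}$, the split $M'_{a,b}$, and the cups and caps, with their mirror images and arrow reversals --- as a Koszul matrix factorization over the polynomial ring $R^{\vec{k}}$ generated by the elementary symmetric functions on the boundary edges, with potential the sum of the power sums $p_{N+1}$ along the edges, $N$ entering only through this potential. The one technical engine is the ``excluding variables'' (Gaussian elimination) lemma for matrix factorizations: if a defining relation of a Koszul matrix factorization expresses an internal variable linearly in the others, that relation may be deleted and the variable substituted out, at the cost of a homotopy equivalence; together with ordinary row and column operations and the behaviour of tensor products over a common base ring, this reduces each web relation to a finite computation. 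Throughout one works with the natural quantum ($q$-)grading of matrix factorizations together with the $\Z/2\Z$ homological grading, and uses that the object-level operation $X\mapsto X\{-1\}\langle 1\rangle$ is the categorical lift of multiplication by $v=-q^{-1}$.

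The planar isotopy relations (zig-zags, etc.) and the associativity relation \eqref{eq:associativity} are the mildest: the matrix factorization of an iterated merge of edges colored $a,b,c$ depends, through its potential and its ideal of defining relations, only on the unordered collection of boundary edges and not on the bracketing, so the two sides of \eqref{eq:associativity} are in fact canonically isomorphic with no shift; isotopy invariance of the whole assignment is part of the framework of \cite{wu,yo1}.

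For the digon relations \eqref{eq:paralleldigon} and \eqref{eq:oppositedigon} one composes the merge and split matrix factorizations along the internal edge. That edge contributes a block of variables constrained by the merge and split relations; eliminating them by Gaussian elimination leaves a matrix factorization on the boundary which, after further row and column reductions, splits as a direct sum of grading-shifted copies of the identity matrix factorization of the surviving parallel edges. The multiplicities and $q$-degree shifts of the summands assemble into the Poincar\'e polynomial of a Grassmannian --- $\mathrm{Gr}(a,a+b)$ for \eqref{eq:paralleldigon} and $\mathrm{Gr}(b,N-a)$ for \eqref{eq:oppositedigon}, the latter being where $N$ becomes visible --- that is, into the Gaussian binomial $\begin{bmatrix}a+b\\a\end{bmatrix}$, respectively $\begin{bmatrix}N-a\\b\end{bmatrix}$. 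Rewriting this $q$-graded decomposition in the $v$-grading via the operation $X\mapsto X\{-1\}\langle 1\rangle$ turns the graded multiplicities into the coefficients of the $v$-binomials written in Definition~\ref{def:webrelations} and produces precisely the homological degree shifts that the statement allows --- this is the categorical incarnation of the remark, just after Theorem~\ref{thm:ckm}, that the sign changes are absorbed by the $v$-binomials.

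The parallel square \eqref{eq:parallelsquare} follows from the parallel digon together with \eqref{eq:associativity}: one rebrackets the two rungs into a bigon between two stubs, applies the parallel-digon decomposition, and reassembles. The genuinely hard case, and the main obstacle, is the opposite square \eqref{eq:oppositesquare}, the categorified commutator relation. Here one must perform a careful Gaussian elimination on a comparatively large Koszul matrix factorization built from two oppositely oriented rungs; the summands of the reduced object are indexed by the parameter $r$ of the statement, and the coefficients $\begin{bmatrix}a-b+t-s\\r\end{bmatrix}$ appear as the graded dimensions of the corresponding pieces. Keeping the quantum and homological gradings under control simultaneously through this reduction, and again passing to the $v$-grading with the sign absorbed into a homological shift, is the delicate part; it is exactly the computation of Sections~6--11 of \cite{wu} and Section~3 of \cite{yo1}. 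The mirror-image and arrow-reversal variants of all of these relations follow from the symmetry of the matrix factorization construction under reflection and reorientation, made precise by the duality $\widehat{u^*}\cong\hat u_{\bullet}\{-d(\vec{k})\}\langle 1\rangle$ and the identification \eqref{eq:ExtvH}.
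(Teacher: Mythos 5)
The paper gives no proof of this theorem at all: the sentence immediately preceding it says explicitly that the proof is to be found in Sections~6 through 11 of~\cite{wu} and Section~3 of~\cite{yo1}, and nothing further is argued in the text. Your sketch is therefore a reconstruction of an external argument rather than something that can be compared line by line against this paper's own reasoning. As a reconstruction of the cited sources it is reasonable: the presentation of web matrix factorizations as Koszul matrix factorizations over a boundary ring, the exclusion-of-variables (Gaussian elimination) lemma as the main engine, the identification of the digon multiplicities with Poincar\'e polynomials of Grassmannians, the derivation of the parallel square from digon and associativity, and the opposite square as the delicate case are all faithful to the structure of those references, as is the handling of mirror images and arrow reversals through duality as in~\eqref{eq:ExtvH}.

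One caution, though, about a claim you state as if it were a convention of this paper: there is no statement here that ``$X\mapsto X\{-1\}\langle 1\rangle$ is the categorical lift of multiplication by $v=-q^{-1}$.'' On the contrary, the remark immediately after Theorem~\ref{thm:catwebrels} says that the homological $\Z/2\Z$-shift is simply forgotten in the notation, and the conversion from $q$ to $v$ in the web relations is done entirely at the decategorified level through the identity $\begin{bmatrix}a+b\\a\end{bmatrix}_v=(-1)^{ab}\begin{bmatrix}a+b\\a\end{bmatrix}_q$ stated after Theorem~\ref{thm:ckm}, not by interpreting a particular shift functor as a sign. The imprecision is harmless for the statement of Theorem~\ref{thm:catwebrels}, since the theorem only asserts equivalences up to unspecified homological shifts, but you should not present the $\{-1\}\langle 1\rangle$ dictionary as if the paper endorsed it.
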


Because of the last remark in Theorem~\ref{thm:catwebrels}, people 
working on $\mathfrak{sl}_N$-web matrix factorizations 
usually forget about the homological degree in their notation. 
I will follow that tradition and will never write 
homological shifts explicitly in the equations below.  

\subsection{The graded web category}\label{sec:webcat}
Let $\vec{k}=(k_1,\ldots,k_m)\in\Lambda(m,m)_N$. 
In this section I recall the $\C$-linear category 
$\mathcal{W}^{\circ}(\vec{k},N)$ from~\cite{my}. 

\begin{definition}
\label{def:web-category}
The objects of $\mathcal{W}^{\circ}(\vec{k},N)$ are by definition 
all matrix factorizations which are homotopy equivalent to 
direct sums of matrix factorizations of the form $\hat{u}$, where 
$u$ is an $N$-ladder with $m$ uprights in $W(\vec{k},N)$. 

For any pair of objects $X,Y\in \mathcal{W}^{\circ}(\vec{k},N)$, 
we define 
$$\mathcal{W}^{\circ}(X,Y):=\mathrm{Ext}(X,Y).$$
Composition in $\mathcal{W}^{\circ}(\vec{k},N)$ 
is induced by the composition of 
homomorphisms between matrix factorizations. 

Note that $\mathcal{W}^{\circ}(\vec{k},N)^*$ is a $\mathbb{Z}$-graded $\C$-linear 
additive category which admits translation and has finite-dimensional 
hom-spaces. 
\end{definition}

Identifying ladders with the corresponding matrix factorizations, we see 
that $\mathcal{W}^{\circ}(\vec{k},N)$ is a full subcategory of the 
homotopy category of matrix factorization with fixed 
potential determined by $\vec{k}$ and $N$. The latter category is 
Krull-Schmidt by Propositions 24 and 25 in~\cite{kr}, so we can take 
the Karoubi envelope of $\mathcal{W}^{\circ}(\vec{k},N)$, 
denoted $\dot{\mathcal{W}^{\circ}}(\vec{k},N)$, 
which is also Krull-Schmidt.  

\subsection{Graded web algebras}
For any $T\in\mathrm{Std}^{(N^{\ell})}_{\vec{k}}$, 
let $\hat{A}^T$ denote the $\mathfrak{sl}_N$-matrix 
factorization associated to $A^T$ in~\cite{my}.
\begin{definition}
\label{def:webalgebra}
For any pair $S,T\in \mathrm{Std}^{(N^{\ell})}_{\vec{k}}$, define  
$${}_SH(\vec{k},N)_T:=\mathrm{EXT}(\hat{A}^S,\hat{A}^T)\;(\cong 
H(\widehat{(A^S)^*A^T})\{d(\vec{k})\}).$$

The {\em web algebra} $H(\vec{k},N)$ is defined by 
$$H(\vec{k},N):=\bigoplus_{S,T\in \mathrm{Std}^{(N^{\ell})}_{\vec{k}}} {}_SH(\vec{k},N)_T,$$
with multiplication induced by the composition of maps between matrix 
factorizations.

Note that $H(\vec{k},N)$ is a finite-dimensional graded unital associative 
algebra.  
\end{definition} 

The following proposition is due to Buchweitz, see Proposition 10.1.5 and 
Example 10.1.6 in~\cite{bu}. The ring 
$R$ in Buchweitz's Example 10.1.6 is equal to the center of $H(\vec{k},N)$ in our case.  
As I will show in Corollary~\ref{cor:center} below, this 
center is isomorphic to the 
complex cohomology ring of the Spaltenstein variety $X^{(N^{\ell})}_{\vec{k}}$ 
of partial flags in $\C^{m}$ of type $\vec{k}$ and 
nilpotent linear operator of Jordan type $(\ell^N)$. 
The Gorenstein parameter in the proposition below is 
therefore equal to twice the top dimension of 
$H^*(X^{(N^{\ell})}_{\vec{k}})$ (``twice'' because $\deg(x_i)=2$), 
which is equal to 
$$2d(\vec{k}):=N(N-1)\ell-\sum_{i=1}^{m}k_i(k_i-1)$$ 
by Theorem 1.2 in~\cite{bo} and the remarks following that theorem. 
\begin{proposition}\label{prop:Frob}
The web algebra $H(\vec{k},N)$ is a graded symmetric Frobenius 
algebra of Gorenstein parameter $2d(\vec{k})$. This means that 
$$H(\vec{k},N)^{\vee}\cong H(\vec{k},N)\{-2d(\vec{k})\}$$
as graded $H(\vec{k},N)-H(\vec{k},N)$ bimodules, where $H(\vec{k},N)^{\vee}$ is the graded 
dual. 
\end{proposition}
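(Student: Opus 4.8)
The plan is to realize $H(\vec{k},N)$ as a stable endomorphism ring over a Gorenstein hypersurface and then quote Buchweitz's structure theorem \cite[Prop.~10.1.5, Ex.~10.1.6]{bu}. Concretely, by \eqref{eq:ExtvH} and Definition~\ref{def:webalgebra} we have $H(\vec{k},N)\cong\END_{\HMF}(M)$, where $M:=\bigoplus_{T\in\mathrm{Std}^{(N^{\ell})}_{\vec{k}}}\hat{A}^T$ is an object in the homotopy category of $\mathfrak{sl}_N$-matrix factorizations with the fixed potential $\omega_{\vec{k}}$ over a polynomial ring $R^{\vec{k}}$. Via Eisenbud's periodicity this homotopy category is the stable category $\underline{\mathrm{MCM}}(B)$ of maximal Cohen--Macaulay modules over the hypersurface $B:=R^{\vec{k}}/(\omega_{\vec{k}})$, equivalently the singularity category $D_{\mathrm{sg}}(B)$; and $B$ is Gorenstein, being a hypersurface in a regular ring. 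Here I would use the fact, from \cite{my}, that each $\hat{A}^T$ is an honest matrix factorization, hence an honest MCM $B$-module, so that $\END_{\HMF}(M)=\underline{\mathrm{End}}_B(M)$.

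Next I would invoke Buchweitz's theorem for such $B$ and such $M$ with finite-dimensional stable endomorphism ring: $\underline{\mathrm{End}}_B(M)$ is a graded symmetric Frobenius $\C$-algebra, with Gorenstein parameter equal to the top nonzero degree $p$ of its graded center (the ring $R$ of \cite[Ex.~10.1.6]{bu}). At bottom, symmetry here is Serre duality in $D_{\mathrm{sg}}(B)$: in the grading convention of this paper, where the homological $\Z/2$-shift is suppressed (Theorem~\ref{thm:catwebrels} and the remark after it), the Serre functor on the relevant category is a pure quantum-degree shift $\{-p\}$, so the natural isomorphism $\HOM(X,Y)^{\vee}\cong\HOM(Y,X)\{-p\}$ is compatible with pre- and post-composition; applied to $M$ this says exactly that $H(\vec{k},N)^{\vee}\cong H(\vec{k},N)\{-p\}$ as graded bimodules over $H(\vec{k},N)$.

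It then remains to identify $p=2d(\vec{k})$. By Corollary~\ref{cor:center} the center of $H(\vec{k},N)$ is $H^*(X^{(N^{\ell})}_{\vec{k}};\C)$, the cohomology of the Spaltenstein variety of partial flags of type $\vec{k}$ for a nilpotent of Jordan type $(\ell^N)$; this is concentrated in even degrees and its top nonzero degree is $2\dim_{\C}X^{(N^{\ell})}_{\vec{k}}$. By Theorem~1.2 of \cite{bo} and the remarks following it, $2\dim_{\C}X^{(N^{\ell})}_{\vec{k}}=N(N-1)\ell-\sum_{i=1}^{m}k_i(k_i-1)=2d(\vec{k})$, so $p=2d(\vec{k})$ and the proposition follows.

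I expect the main obstacle to be making Buchweitz's theorem literally applicable --- verifying that $(B,M)$ fits the hypotheses of \cite[Ex.~10.1.6]{bu}, that the stable endomorphism ring is finite-dimensional of the stated form, and above all that one really obtains \emph{symmetric} (not merely Frobenius), i.e.\ controlling the Serre functor / Nakayama automorphism as indicated. A more self-contained alternative, avoiding Buchweitz altogether, would be to build a trace $\epsilon\colon H(\vec{k},N)\to\C$ of degree $-2d(\vec{k})$ block-diagonally --- on ${}_TH(\vec{k},N)_T=H(\widehat{(A^T)^*A^T})\{d(\vec{k})\}$ use the counit of closed-web $\mathfrak{sl}_N$-homology --- then prove $\epsilon(ab)=\epsilon(ba)$ from the (graded) commutativity of closed-web homology together with a rotation of the closed web $(A^T)^*(A^S)(A^S)^*(A^T)$, and prove non-degeneracy of $(a,b)\mapsto\epsilon(ab)$ from the matrix-factorization duality $\widehat{u^*}=\hat{u}_{\bullet}\{-d(\vec{k})\}\langle 1\rangle$ of \cite{my}, which gives $\EXT(\hat{A}^S,\hat{A}^T)^{\vee}\cong\EXT(\hat{A}^T,\hat{A}^S)\{-2d(\vec{k})\}$ compatibly with composition. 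In that route the crux becomes the non-degeneracy, i.e.\ that self-duality of closed-web homology pairs ${}_SH(\vec{k},N)_T$ perfectly against ${}_TH(\vec{k},N)_S$.
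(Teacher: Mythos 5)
Your main route reproduces the paper's proof: the paper likewise derives the statement directly from Buchweitz \cite[Prop.~10.1.5, Ex.~10.1.6]{bu}, identifies Buchweitz's ring $R$ with $Z(H(\vec{k},N))$, and reads off the Gorenstein parameter $2d(\vec{k})$ from the forward-referenced Corollary~\ref{cor:center} together with Brundan--Ostrik's computation of the top degree of Spaltenstein cohomology. The extra machinery you supply (singularity category, Serre functor as a pure quantum-degree shift) and the alternative closed-web trace you sketch go beyond what the paper writes out, which leaves the verification of Buchweitz's hypotheses implicit exactly as you anticipate.
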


To fix notation, let us already introduce the following two definitions here. 
\begin{definition}
\label{def:webalgebramod}
\begin{eqnarray*}
\mathcal{W}(\vec{k},N)&:=&H(\vec{k},N)-\mathrm{mod}_{\mathrm{gr}};\\
\mathcal{W}^p(\vec{k},N)&:=&H(\vec{k},N)-\mathrm{pmod}_{\mathrm{gr}}.
\end{eqnarray*} 
\end{definition}

We also give some important examples of projective $H(\vec{k},N)$-modules. 
\begin{example}
\label{ex:leftandrightprojectives}
For $T\in \mathrm{Std}^{(N^{\ell})}_{\vec{k}}$, we define
$$P^{T}:=H(\vec{k},N)1_{\hat{A}^T}\{-d(\vec{k})\}\quad\text{and}\quad 
{}^TP:=1_{\hat{A}^T}H(\vec{k},N)\{-d(\vec{k})\}.$$
Since $\sum_{T\in \mathrm{Std}^{(N^{\ell})}_{\vec{k}}} 1_{\hat{A}^T}=1$ in $H(\vec{k},N)$, 
we see that $P^T$ and ${}^TP$ are graded left and right projective 
$H(\vec{k},N)$-modules, respectively. Both are finite-dimensional, of course. 

We have normalized the grading of $P^T$ and ${}^TP$ so that 
$$P^{T}\cong (P^T)^{\vee}\quad\text{and}\quad {}^TP\cong ({}^TP)^{\vee}$$
as graded vector spaces (see Proposition~\ref{prop:Frob}).  

We note that $P^T$ and ${}^TP$ can be decomposable as $H(\vec{k},N)$-modules 
in general.    
\end{example}




\section{Categorified quantum $\mathfrak{sl}_m$ and $2$-representations}
\label{sec:catgroupandrep}
\subsection{Categorified $\Uv{m}$}\label{kl}
Khovanov and Lauda introduced diagrammatic 2-categories $\u(\mathfrak{g})$ 
which categorify the integral version of the corresponding 
idempotented quantum groups~\cite{kl3}. Independently, Rouquier 
also introduced similar 2-categories~\cite{rou}.
Subsequently, Cautis and Lauda~\cite{cl} defined diagrammatic 2-categories 
$\u_Q(\mathfrak{g})$ with implicit scalers $Q$ consisting of $t_{ij}$, $r_i$ and 
$s_{ij}^{pq}$ which determine certain signs in the definition of 
the categorified quantum groups.
\\
\indent
In this section, I recall $\u(\mathfrak{sl}_m)=\u_Q(\mathfrak{sl}_m)$ 
briefly. The implicit scalars $Q$ are given by 
$t_{ij}=-1$ if $j=i+1$, $t_{ij}=1$ otherwise, $r_i=1$ and $s_{ij}^{pq}=0$. 
This corresponds precisely to the signed 
version in~\cite{kl3,kl4}. The other conventions here 
are the same as those in Section~\ref{sec:fund}.  

\begin{definition}[Khovanov-Lauda]\label{def:KL}
The $2$-category $\u(\mathfrak{sl}_{m})$ is defined as follows:
\begin{itemize}
\item[$\bullet$] The objects in $\u(\mathfrak{sl}_m)$ are the weights $\l \in  \Z^{m-1} $.
\end{itemize}
For any pair of objects $\l$ and $\l'$ in $\u(\mathfrak{sl}_m)$, the hom category 
$\u(\mathfrak{sl}_m)(\l,\l')$ is the graded additive $\C$-linear category consisting of:
\begin{itemize}
\item[$\bullet$] objects ($1$-morphisms in $\u(\mathfrak{sl}_m)$), 
which are finite formal sums of the form 
$\e_{\ui}{\idm}_{\l}\{t\}$ where $t\in \Z$ is the grading shift 
and $\ui$ is a signed sequence such that 
$\l'=\l+\sum_{a=1}^{l}\epsilon_ai_{a}'$.
\item[$\bullet$] morphisms from $\e_{\ui}{\idm}_{\l}\{t\}$ to 
$\e_{\underline{l}}{\idm}_{\l}\{t'\}$ in $\u(\mathfrak{sl}_m)(\l,\l')$ 
($2$-morphisms in $\u(\mathfrak{sl}_m)$) are 
$\C$-linear combinations of diagrams with degree $t'-t$ spanned by 
composites of the following diagrams:
\end{itemize}

\indent
\begin{eqnarray*}
&&\txt{\unitlength 0.1in
\begin{picture}(  4.0000,  4.00)(  1.5500, -8.500)
\special{pn 8}%
\special{pa 400 800}%
\special{pa 400 400}%
\special{fp}%
\special{sh 1}%
\special{pa 400 400}%
\special{pa 380 468}%
\special{pa 400 454}%
\special{pa 420 468}%
\special{pa 400 400}%
\special{fp}%
\put(4.0000,-8.50000){\makebox(0,0){${}_{i}$}}%
\put(5.0000,-7.0000){\makebox(0,0){${}_{{\lambda}}$}}%
\put(2.0000,-7.0000){\makebox(0,0){${}_{{\lambda+i'}}$}}%
\special{pn 4}%
\special{sh 1}%
\special{ar 400 600 25 25 0  6.28318530717959E+0000}%
\end{picture}
\txt{\unitlength 0.1in
\begin{picture}(  4.0000,  4.00)(  1.5500, -8.500)%
\put(4.0000,-8.50000){\makebox(0,0){${}_{i}$}}%
\put(5.0000,-7.0000){\makebox(0,0){${}_{{\lambda}}$}}%
\put(2.0000,-7.0000){\makebox(0,0){${}_{{\lambda-i'}}$}}%
\special{pn 8}%
\special{pa 400 390}%
\special{pa 400 800}%
\special{fp}%
\special{sh 1}%
\special{pa 400 800}%
\special{pa 420 734}%
\special{pa 400 748}%
\special{pa 380 734}%
\special{pa 400 800}%
\special{fp}%
\special{pn 4}%
\special{sh 1}%
\special{ar 400 600 25 25 0  6.28318530717959E+0000}%
\end{picture}
\\[1em]
&&\hspace{-.5cm}\txt{\unitlength 0.1in
\begin{picture}(6.00,4.0)(1.500,-6.0)
\special{pn 8}%
\special{pa 200 200}%
\special{pa 200 233}%
\special{pa 202 267}%
\special{pa 205 301}%
\special{pa 210 336}%
\special{pa 217 370}%
\special{pa 227 403}%
\special{pa 240 433}%
\special{pa 257 460}%
\special{pa 278 483}%
\special{pa 303 502}%
\special{pa 332 515}%
\special{pa 365 523}%
\special{pa 399 526}%
\special{pa 433 523}%
\special{pa 466 516}%
\special{pa 496 502}%
\special{pa 521 484}%
\special{pa 542 461}%
\special{pa 559 434}%
\special{pa 573 404}%
\special{pa 583 372}%
\special{pa 590 338}%
\special{pa 595 303}%
\special{pa 598 268}%
\special{pa 600 234}%
\special{pa 600 202}%
\special{pa 600 200}%
\special{sp}%
\special{sh 1}%
\special{pa 433 523}%
\special{pa 366 543}%
\special{pa 380 523}%
\special{pa 366 503}%
\special{pa 433 523}%
\special{fp}%
\put(5.50000,-2.0000){\makebox(0,0){${}_{i}$}}%
\put(6.0000,-5.20000){\makebox(0,0){${}_{{\l}}$}}%
\end{picture}
\txt{\unitlength 0.1in
\begin{picture}(6.00,4.0)(1.500,-6.0)
\special{pn 8}%
\special{pa 200 200}%
\special{pa 200 233}%
\special{pa 202 267}%
\special{pa 205 301}%
\special{pa 210 336}%
\special{pa 217 370}%
\special{pa 227 403}%
\special{pa 240 433}%
\special{pa 257 460}%
\special{pa 278 483}%
\special{pa 303 502}%
\special{pa 332 515}%
\special{pa 365 523}%
\special{pa 399 526}%
\special{pa 433 523}%
\special{pa 466 516}%
\special{pa 496 502}%
\special{pa 521 484}%
\special{pa 542 461}%
\special{pa 559 434}%
\special{pa 573 404}%
\special{pa 583 372}%
\special{pa 590 338}%
\special{pa 595 303}%
\special{pa 598 268}%
\special{pa 600 234}%
\special{pa 600 202}%
\special{pa 600 200}%
\special{sp}%
\special{sh 1}%
\special{pa 365 523}%
\special{pa 431 543}%
\special{pa 418 523}%
\special{pa 431 503}%
\special{pa 365 523}%
\special{fp}%
\put(5.50000,-2.0000){\makebox(0,0){${}_{i}$}}%
\put(6.0000,-5.20000){\makebox(0,0){${}_{{\l}}$}}%
\end{picture}
\\[1em]
&&\hspace{-.5cm}\txt{\unitlength 0.1in
\begin{picture}(6.00,4.0)(1.500,-6.7500)
\special{pn 8}%
\special{pa 200 600}%
\special{pa 200 567}%
\special{pa 202 533}%
\special{pa 205 499}%
\special{pa 210 464}%
\special{pa 217 430}%
\special{pa 227 397}%
\special{pa 240 367}%
\special{pa 257 340}%
\special{pa 278 317}%
\special{pa 303 298}%
\special{pa 332 285}%
\special{pa 365 277}%
\special{pa 399 274}%
\special{pa 433 277}%
\special{pa 466 284}%
\special{pa 496 298}%
\special{pa 521 316}%
\special{pa 542 339}%
\special{pa 559 366}%
\special{pa 573 396}%
\special{pa 583 428}%
\special{pa 590 462}%
\special{pa 595 497}%
\special{pa 598 532}%
\special{pa 600 566}%
\special{pa 600 598}%
\special{pa 600 600}%
\special{sp}%
\special{sh 1}%
\special{pa 365 277}%
\special{pa 431 257}%
\special{pa 418 277}%
\special{pa 431 297}%
\special{pa 365 277}%
\special{fp}%
\put(5.50000,-6.0000){\makebox(0,0){${}_{i}$}}%
\put(6.0000,-2.70000){\makebox(0,0){${}_{{\l}}$}}%
\end{picture}
\txt{\unitlength 0.1in
\begin{picture}(6.00,4.0)(1.500,-6.7500)
\special{pn 8}%
\special{pa 200 600}%
\special{pa 200 567}%
\special{pa 202 533}%
\special{pa 205 499}%
\special{pa 210 464}%
\special{pa 217 430}%
\special{pa 227 397}%
\special{pa 240 367}%
\special{pa 257 340}%
\special{pa 278 317}%
\special{pa 303 298}%
\special{pa 332 285}%
\special{pa 365 277}%
\special{pa 399 274}%
\special{pa 433 277}%
\special{pa 466 284}%
\special{pa 496 298}%
\special{pa 521 316}%
\special{pa 542 339}%
\special{pa 559 366}%
\special{pa 573 396}%
\special{pa 583 428}%
\special{pa 590 462}%
\special{pa 595 497}%
\special{pa 598 532}%
\special{pa 600 566}%
\special{pa 600 598}%
\special{pa 600 600}%
\special{sp}%
\special{sh 1}%
\special{pa 433 277}%
\special{pa 366 257}%
\special{pa 380 277}%
\special{pa 366 297}%
\special{pa 433 277}%
\special{fp}%
\put(5.50000,-6.0000){\makebox(0,0){${}_{i}$}}%
\put(6.0000,-2.70000){\makebox(0,0){${}_{{\l}}$}}%
\end{picture}
\\[1em]
&&\hspace{2cm}\txt{\unitlength 0.1in
\begin{picture}(  6.1000,  3.0000)(  3.9500, -8.50000)
\special{pn 8}%
\special{pa 400 800}%
\special{pa 402 770}%
\special{pa 412 742}%
\special{pa 428 716}%
\special{pa 448 694}%
\special{pa 474 672}%
\special{pa 504 652}%
\special{pa 536 634}%
\special{pa 568 616}%
\special{pa 602 600}%
\special{pa 638 582}%
\special{pa 670 564}%
\special{pa 702 546}%
\special{pa 730 526}%
\special{pa 756 504}%
\special{pa 776 482}%
\special{pa 790 456}%
\special{pa 798 428}%
\special{pa 800 400}%
\special{sp}%
\special{pn 8}%
\special{pa 800 800}%
\special{pa 798 770}%
\special{pa 788 742}%
\special{pa 774 716}%
\special{pa 752 694}%
\special{pa 726 672}%
\special{pa 698 652}%
\special{pa 666 634}%
\special{pa 632 616}%
\special{pa 598 600}%
\special{pa 564 582}%
\special{pa 530 564}%
\special{pa 500 546}%
\special{pa 470 526}%
\special{pa 446 504}%
\special{pa 426 482}%
\special{pa 410 456}%
\special{pa 402 428}%
\special{pa 400 400}%
\special{sp}%
\special{pn 8}%
\special{pa 400 410}%
\special{pa 400 400}%
\special{fp}%
\special{sh 1}%
\special{pa 400 400}%
\special{pa 380 468}%
\special{pa 400 454}%
\special{pa 420 468}%
\special{pa 400 400}%
\special{fp}%
\special{pn 8}%
\special{pa 800 410}%
\special{pa 800 400}%
\special{fp}%
\special{sh 1}%
\special{pa 800 400}%
\special{pa 780 468}%
\special{pa 800 454}%
\special{pa 820 468}%
\special{pa 800 400}%
\special{fp}%
\put(4.0000,-9.0000){\makebox(0,0){${}_{i}$}}%
\put(8.0000,-9.0000){\makebox(0,0){${}_{l}$}}%
\put(9.0000,-6.0000){\makebox(0,0){${}_{{\l}}$}}%
\put(2.0000,-6.0000){\makebox(0,0){${}_{{\l+i'+l'}}$}}%
\end{picture}
\\[1em]
&&\hspace{2cm}\txt{\unitlength 0.1in
\begin{picture}(  6.1000,  3.0000)(  3.9500, -8.50000)
\special{pn 8}%
\special{pa 400 800}%
\special{pa 402 770}%
\special{pa 412 742}%
\special{pa 428 716}%
\special{pa 448 694}%
\special{pa 474 672}%
\special{pa 504 652}%
\special{pa 536 634}%
\special{pa 568 616}%
\special{pa 602 600}%
\special{pa 638 582}%
\special{pa 670 564}%
\special{pa 702 546}%
\special{pa 730 526}%
\special{pa 756 504}%
\special{pa 776 482}%
\special{pa 790 456}%
\special{pa 798 428}%
\special{pa 800 400}%
\special{sp}%
\special{pn 8}%
\special{pa 800 800}%
\special{pa 798 770}%
\special{pa 788 742}%
\special{pa 774 716}%
\special{pa 752 694}%
\special{pa 726 672}%
\special{pa 698 652}%
\special{pa 666 634}%
\special{pa 632 616}%
\special{pa 598 600}%
\special{pa 564 582}%
\special{pa 530 564}%
\special{pa 500 546}%
\special{pa 470 526}%
\special{pa 446 504}%
\special{pa 426 482}%
\special{pa 410 456}%
\special{pa 402 428}%
\special{pa 400 400}%
\special{sp}%
\special{pn 8}%
\special{pa 400 790}%
\special{pa 400 800}%
\special{fp}%
\special{sh 1}%
\special{pa 400 800}%
\special{pa 420 734}%
\special{pa 400 748}%
\special{pa 380 734}%
\special{pa 400 800}%
\special{fp}%
\special{pn 8}%
\special{pa 800 790}%
\special{pa 800 800}%
\special{fp}%
\special{sh 1}%
\special{pa 800 800}%
\special{pa 820 734}%
\special{pa 800 748}%
\special{pa 780 734}%
\special{pa 800 800}%
\special{fp}%
\put(4.0000,-9.0000){\makebox(0,0){${}_{i}$}}%
\put(8.0000,-9.0000){\makebox(0,0){${}_{l}$}}%
\put(9.0000,-6.0000){\makebox(0,0){${}_{{\l}}$}}%
\put(2.0000,-6.0000){\makebox(0,0){${}_{{\l-i'-l'}}$}}%
\end{picture}
\end{eqnarray*}
As already remarked, the relations on the $2$-morphisms are 
those of the signed version in~\cite{kl3,kl4}, which I do not recall here 
because I do not need them explicitly in this paper.  
\end{definition}

Khovanov and Lauda's main result for type $A$ in~\cite{kl3} 
was their Proposition 1.4.
\begin{theorem}[Khovanov-Lauda]
\label{thm:KL}
The linear map 
$$\gamma \colon \Uv{m}\to K_0^v(\u(\mathfrak{sl}_m))$$ 
defined by 
$$v^t E_{\ui}1_{\lambda}\to \e_{\ui}{\idm}_{\l}\{t\}$$
is an isomorphism of $\C(v)$-algebras. 
\end{theorem}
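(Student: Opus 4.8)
The plan is to follow Khovanov and Lauda's argument (their Proposition~1.4 in~\cite{kl3}). There are three things to establish: that $\gamma$ is a well-defined homomorphism of $\C(v)$-algebras, that it is surjective, and that it is injective. Multiplicativity on the generators $E_{\ui}1_{\lambda}$ is essentially built in, since the product on $K_0^v(\u(\mathfrak{sl}_m))$ is induced by horizontal composition of $1$-morphisms and $\e_{\ui}\e_{\uj}\idm_{\mu}=\e_{\ui\uj}\idm_{\mu}$; so the real content is that $\gamma$ respects the defining relations of $\Uv{m}$ (well-definedness) and, above all, that it is injective.

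For \emph{well-definedness} I would exhibit, for each defining relation of $\Uv{m}$, a matching isomorphism of $1$-morphisms in $\u(\mathfrak{sl}_m)$. The Cartan relations $K_iK_j=K_jK_i$ and $K_i1_{\lambda}=v^{\lambda_i}1_{\lambda}$ are encoded in the grading and in the weight decomposition of objects. The commutator relation is categorified by the ``$\mathfrak{sl}_2$'' decompositions
\begin{align*}
\e_{+i}\e_{-i}\idm_{\lambda}&\cong\e_{-i}\e_{+i}\idm_{\lambda}\oplus\idm_{\lambda}^{\oplus[\lambda_i]}&&(\lambda_i\geq 0),\\
\e_{-i}\e_{+i}\idm_{\lambda}&\cong\e_{+i}\e_{-i}\idm_{\lambda}\oplus\idm_{\lambda}^{\oplus[-\lambda_i]}&&(\lambda_i\leq 0),
\end{align*}
which I would deduce from the biadjunction between $\e_{+i}$ and $\e_{-i}$ and the bubble relations of Definition~\ref{def:KL}, together with $\e_{+i}\e_{-j}\idm_{\lambda}\cong\e_{-j}\e_{+i}\idm_{\lambda}$ for $i\ne j$. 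The quantum Serre relations require categorified isomorphisms such as $\e_{+i}^{(2)}\e_{+j}\idm_{\lambda}\oplus\e_{+j}\e_{+i}^{(2)}\idm_{\lambda}\cong\e_{+i}\e_{+j}\e_{+i}\idm_{\lambda}$ when $|i-j|=1$ (and $\e_{+i}\e_{+j}\idm_{\lambda}\cong\e_{+j}\e_{+i}\idm_{\lambda}$ when $|i-j|>1$); establishing these is the delicate part of this step and rests on the representation theory of the nilHecke algebra, which supplies the divided-power $1$-morphism $\e_{+i}^{(a)}$ as the essentially unique graded indecomposable projective.

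\emph{Surjectivity} is then immediate: every $1$-morphism of $\u(\mathfrak{sl}_m)$ is by definition a finite direct sum of shifted $\e_{\ui}\idm_{\lambda}$, so their classes span $K_0^v(\u(\mathfrak{sl}_m))$ over $\C(v)$ and all lie in the image of $\gamma$. For \emph{injectivity} I would use the Euler form $\langle[X],[Y]\rangle:=\gdim\,\HOM_{\u(\mathfrak{sl}_m)}(X,Y)$ on $K_0^v(\u(\mathfrak{sl}_m))$. The crucial input is a basis theorem for $2$-morphism spaces: the diagrams built from dots, crossings and counterclockwise bubbles that span $\HOM(\e_{\ui}\idm_{\lambda},\e_{\uj}\idm_{\lambda})$ are in fact linearly independent. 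I would obtain this the way Khovanov and Lauda do, by constructing a faithful action of $\u(\mathfrak{sl}_m)$ on a direct sum of categories of graded modules over cohomology rings of partial flag varieties (equivalently, over polynomial rings with their symmetric-group actions). From the basis theorem one reads off $\gdim\,\HOM(\e_{\ui}\idm_{\lambda},\e_{\uj}\idm_{\lambda})$ explicitly and identifies the Euler form, pulled back along $\gamma$, with the standard nondegenerate bilinear form on $\Uv{m}$ (Lusztig's form on the half-quantum groups, combined on the idempotented algebra). Since the $E_{\ui}1_{\lambda}$ span $\Uv{m}$ and this form is nondegenerate, any $x$ with $\gamma(x)=0$ would pair trivially with everything and hence vanish; so $\gamma$ is injective and therefore an isomorphism of $\C(v)$-algebras.

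The hard part will be the basis theorem for the $2$-morphism spaces --- equivalently the faithfulness of the flag-variety (polynomial) representation of $\u(\mathfrak{sl}_m)$ --- since both the computation of the graded hom spaces and the nondegeneracy argument that yields injectivity depend on it; the categorified Serre relations needed for well-definedness are a secondary technical hurdle that also leans on nilHecke algebra input.
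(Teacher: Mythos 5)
The paper does not prove Theorem~\ref{thm:KL}; it states it and cites Khovanov and Lauda's Proposition~1.4 in~\cite{kl3} directly, so there is no in-paper proof to compare against. Your sketch is a faithful reconstruction of Khovanov--Lauda's own argument --- well-definedness via the categorified $\mathfrak{sl}_2$ decompositions and Serre relations, surjectivity from the definition of $1$-morphisms as formal sums of $\e_{\ui}\idm_{\lambda}\{t\}$, and injectivity by pulling the Euler form back to Lusztig's nondegenerate form, with the basis theorem for $2$-morphism spaces (faithfulness of the flag-variety/polynomial representation) correctly singled out as the essential technical input.
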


\subsection{Cyclotomic KLR algebras and $2$-representations}\label{sec:cyclKLR}
Let $\Lambda$ be a dominant $\mathfrak{sl}_m$-weight, 
$V_{\Lambda}$ the irreducible $\U{m}$-module of highest 
weight $\Lambda$ and $P_{\Lambda}$ the set of weights in $V_{\Lambda}$.
 
\begin{definition}[Khovanov-Lauda, Rouquier]\label{def:cyclKLR}
The {\em cyclotomic KLR algebra} $R_{\Lambda}$ is 
the subquotient of $\u(\mathfrak{sl}_m)$ 
defined by the subalgebra of all diagrams with only downward 
oriented strands and right-most region labeled $\Lambda$ 
modded out by the ideal generated by diagrams of the form  
$$
\txt{\unitlength 0.1in
\begin{picture}(  13.0000,  4.00)(  1.5500, -8.500)%
\put(4.0000,-8.50000){\makebox(0,0){${}_{i_1}$}}%
\put(6.0000,-8.50000){\makebox(0,0){${}_{i_2}$}}%
\put(12.0000,-8.50000){\makebox(0,0){${}_{i_t}$}}%
\put(13.5000,-6.0000){\makebox(0,0){${}_{\Lambda_{i_t}}$}}%
\special{pn 8}%
\special{pa 400 390}%
\special{pa 400 800}%
\special{fp}%
\special{sh 1}%
\special{pa 400 800}%
\special{pa 420 734}%
\special{pa 400 748}%
\special{pa 380 734}%
\special{pa 400 800}%
\special{fp}%
\special{pa 600 390}%
\special{pa 600 800}%
\special{fp}%
\special{sh 1}%
\special{pa 600 800}%
\special{pa 620 734}%
\special{pa 600 748}%
\special{pa 580 734}%
\special{pa 600 800}%
\special{fp}%
\special{pa 1200 390}%
\special{pa 1200 800}%
\special{fp}%
\special{sh 1}%
\special{pa 1200 800}%
\special{pa 1220 734}%
\special{pa 1200 748}%
\special{pa 1180 734}%
\special{pa 1200 800}%
\special{fp}%
\special{pn 4}%
\special{sh 1}%
\special{ar 800 600 10 10 0  6.28318530717959E+0000}%
\special{pn 4}%
\special{sh 1}%
\special{ar 900 600 10 10 0  6.28318530717959E+0000}%
\special{pn 4}%
\special{sh 1}%
\special{ar 1000 600 10 10 0  6.28318530717959E+0000}%
\special{pn 4}%
\special{sh 1}%
\special{ar 1200 600 25 25 0  6.28318530717959E+0000}%
\end{picture}
$$
\end{definition}
Note that 
$$R_{\Lambda}=\bigoplus_{\mu\in P_{\Lambda}} R_{\Lambda}(\mu),$$
where $R_{\Lambda}(\mu)$ is the subalgebra generated by all diagrams 
whose left-most region is labeled $\mu$. Brundan and Kleshchev proved that 
$R_{\Lambda}$ is finite-dimensional in Corollary 2.2 in~\cite{bk2}.  
We also define 
$$\mathcal{V}_{\Lambda}:=R_{\Lambda}-\mathrm{mod}_{\mathrm{gr}};$$
$$\mathcal{V}_{\Lambda}^p:=R_{\Lambda}-\mathrm{pmod}_{\mathrm{gr}}.$$

Below I will use Cautis and Lauda's language of strong $\mathfrak{sl}_m$ 
$2$-representations (see Definition 1.2 in~\cite{cl}). For a comparison 
with Rouquier's~\cite{rou} definition of a 
Kac-Moody $2$-representation, see Cautis and Lauda's remark (1) below their 
Definition 1.2.  

In Section 4.4 in~\cite{bk} Brundan and Kleshchev defined a strong 
$\mathfrak{sl}_m$ $2$-representation on $\mathcal{V}_{\Lambda}$, which can 
be restricted to $\mathcal{V}_{\Lambda}^p$.

They also defined a duality $\circledast\colon \mathcal{V}_{\Lambda}\to 
\mathcal{V}_{\Lambda}$. To define it, 
they used Khovanov and Lauda's graded anti-automorphism  
$$*\colon R_{\Lambda}\to R_{\Lambda},$$ 
which is defined by reflecting diagrams in the $x$-axis followed by 
inversion of the orientation. Let $M\in \mathcal{V}_{\Lambda}$. 
On the underlying vector space, we have  
$$M^{\circledast}:=M^{\vee}.$$
The action on $M^{\circledast}$ is defined by 
$$xf(p):=f(x^* p).$$
(Note that I use a left action throughout the paper, contrary to Brundan 
and Kleshchev.) This duality can be restricted to 
$\mathcal{V}_{\Lambda}^p$ and induces 
a $q$-antilinear involution on $K_0^q(\mathcal{V}_{\Lambda}^p)$. 

Brundan and Kleshchev~\cite{bk} proved the following result (Theorems 4.18 
and Theorem 5.14). 

\begin{theorem}[Brundan-Kleshchev]\label{thm:cyclKLR}
There exists a unique $\C(v)$-linear isomorphism  
$$\delta\colon V_{\Lambda}\to K_0^{v}(\mathcal{V}_{\Lambda}^p)$$
of $\Uv{m}$-modules, such that 
\begin{itemize} 
\item $\delta$ intertwines the bar-involutions 
$\tilde{\psi}$ and $\circledast$;
\item $\delta$ intertwines the $v$-Shapovalov form 
and the Euler form;   
\item for each $T\in\mathrm{Std}^{(N^{\ell})}$, there exists an indecomposable 
projective module $Q_T\in \mathcal{V}_{\Lambda}^p$ such that $Q_T=\delta(b_T)$ (note that 
this implies that $Q_T^{\circledast}\cong Q_T$).
\end{itemize} 
\end{theorem}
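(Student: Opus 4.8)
This is a recollection of two theorems of Brundan and Kleshchev, so the proof in the paper is a reference to~\cite{bk}; I sketch the architecture of their argument. The backbone is a graded Ariki-type categorification. First I would observe that the strong $\mathfrak{sl}_m$ $2$-representation on $\mathcal{V}_{\Lambda}$ restricts to $\mathcal{V}_{\Lambda}^p$ and, after keeping track of the grading shifts, makes $K_0^v(\mathcal{V}_{\Lambda}^p)$ into a $\Uv{m}$-module, where by Theorem~\ref{thm:KL} the classes of the $1$-morphisms $\e_{\underline{i}}\idm_{\l}$ act as the matching elements of $\Uv{m}$. Writing $e_{\Lambda}$ for the highest-weight idempotent of $R_{\Lambda}$ (the empty diagram at weight $\Lambda$), the module $R_{\Lambda}e_{\Lambda}\cong\C$ is one-dimensional, projective, and its class is a highest-weight vector of weight $\Lambda$, annihilated by the raising functors precisely because of the cyclotomic relation. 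Using that applying the categorified divided powers to $R_{\Lambda}e_{\Lambda}$ produces every indecomposable projective (the combinatorial surjectivity which is one of the main points of~\cite{bk}), $K_0^v(\mathcal{V}_{\Lambda}^p)$ is a cyclic module generated by a single highest-weight vector of weight $\Lambda$, and since $R_{\Lambda}$ is finite-dimensional (Corollary 2.2 in~\cite{bk2}) it is finite-dimensional, hence isomorphic to $V_{\Lambda}$. I would then let $\delta\colon V_{\Lambda}\to K_0^v(\mathcal{V}_{\Lambda}^p)$ be the resulting $\C(v)$-linear $\Uv{m}$-isomorphism normalised by $\delta(v_{\Lambda})=[R_{\Lambda}e_{\Lambda}]$; uniqueness of $\delta$ satisfying the three listed properties then follows from the uniqueness assertions used below.

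Next I would check the two compatibilities. The duality $\circledast$ is built from the anti-automorphism $*$ of $R_{\Lambda}$ (reflect diagrams in the $x$-axis, reverse orientation); combined with the biadjunctions built into $\u(\mathfrak{sl}_m)$ (Definition~\ref{def:KL}) it induces on $K_0^v(\mathcal{V}_{\Lambda}^p)$ a $v$-antilinear involution that commutes with the $E_{\pm i}$-actions and fixes $[R_{\Lambda}e_{\Lambda}]$. Since $\tilde\psi$ is characterised by exactly these properties (see the discussion preceding Theorem~\ref{thm:KLcan}), $\delta$ intertwines $\tilde\psi$ and $\circledast$. For the forms, those same biadjunctions give the Euler form $\langle[X],[Y]\rangle=\dim_q\mathrm{HOM}(X,Y)$ the adjointness $\langle E_{\pm i}x,y\rangle=\langle x,\tau(E_{\pm i})y\rangle$, with the grading shifts on the cups and caps of Definition~\ref{def:KL} accounting for the powers of $v$ in the definition of $\tau$; moreover $\langle[R_{\Lambda}e_{\Lambda}],[R_{\Lambda}e_{\Lambda}]\rangle=\dim_q e_{\Lambda}R_{\Lambda}e_{\Lambda}=1$. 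By the uniqueness of the $v$-Shapovalov form, $\delta$ then carries it to the Euler form.

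The third and deepest clause is the identification of the indecomposable projectives with the canonical basis, and the triangularity input below is where I expect the real obstacle to be. Because $R_{\Lambda}$ is non-negatively graded with semisimple degree-zero part, each indecomposable projective has a unique grading shift $Q_T$, indexed by $T\in\mathrm{Std}^{(N^{\ell})}$, for which $Q_T^{\circledast}\cong Q_T$; the classes $[Q_T]$ then form a $\tilde\psi$-invariant basis of $K_0^v(\mathcal{V}_{\Lambda}^p)$. It remains to show that, under $\delta^{-1}$, each $[Q_T]$ expands on the standard monomial basis as $x_T+\sum_{\tau\prec T}c_{\tau T}(v)x_{\tau}$ with $c_{\tau T}(v)\in v^{-1}\Z[v^{-1}]$; once this negative-exponent property is in hand, Theorem~\ref{thm:KLcan} forces $\delta^{-1}[Q_T]=b_T$, whence $Q_T=\delta(b_T)$, while $Q_T^{\circledast}\cong Q_T$ was arranged by construction. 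Establishing this triangularity is the hard part: it rests on the theory of graded Specht modules $\Delta(S)$ for $R_{\Lambda}$, the reciprocity $[Q_T]=\sum_S d_{ST}(v)[\Delta(S)]$ dual to $[\Delta(S)]=\sum_T d_{ST}(v)[L(T)]$, the identification of the classes $[\Delta(S)]$ with the standard monomial basis vectors $x_S$, and the appropriate degree bounds on the graded decomposition numbers $d_{ST}(v)$ (the graded refinement of Ariki-type results). All of this is carried out in~\cite{bk}; for the purposes of the present paper the theorem is used as a black box.
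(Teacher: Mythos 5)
The paper itself gives no proof of this theorem: it is stated with attribution to Brundan and Kleshchev and a pointer to Theorems~4.18 and~5.14 of~\cite{bk}, and that citation is the entire ``proof'' present in the text. You correctly identify this and choose instead to sketch the architecture of the Brundan--Kleshchev argument, which is a sensible response to the prompt.

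Your sketch is, in its broad strokes, an accurate summary of the graded Ariki-type categorification underlying~\cite{bk}: $K_0^v(\mathcal{V}^p_\Lambda)$ is a cyclic $\Uv{m}$-module generated by the class of the one-dimensional highest-weight projective, hence isomorphic to $V_\Lambda$; the duality $\circledast$ built from $*$ together with the biadjunctions descends to a $v$-antilinear involution fixing the highest weight and commuting with the action, hence matches $\tilde\psi$; the Euler form satisfies the same adjointness as the Shapovalov form and agrees at the highest-weight vector, hence the two coincide; and the self-dual shifts of the indecomposable projectives are then pinned down by the negative-exponent (triangularity) property relative to graded Specht modules, which is the genuinely difficult input. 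You are also right to flag that last step as the ``real obstacle'' and to treat it as a black box from~\cite{bk}.

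One small imprecision worth correcting: you attribute the annihilation of $[R_\Lambda e_\Lambda]$ by the raising functors to ``the cyclotomic relation.'' That annihilation is really just a weight-support argument --- there is no block of $R_\Lambda$ above weight $\Lambda$, so $i$-restriction applied to the empty block is zero for free. The cyclotomic relation's actual role is structural rather than local: it is what cuts the (otherwise infinite-dimensional) KLR algebra down to a finite-dimensional quotient whose Grothendieck group has the correct weight-space dimensions, so that the cyclic highest-weight module you construct is exactly $V_\Lambda$ and not a larger quotient of the Verma module. This does not change the shape of your argument, but the attribution is misplaced.
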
 

\begin{remark}
Note that Brundan and Kleshchev~\cite{bk} 
use multipartitions instead of column-strict 
tableaux. There is a well-known bijection between these two combinatorial 
data, which I will not recall here. Using this bijection, their module 
$Y(T)$ in Theorem 5.14 satisfies   
$$Y(T)=Q_T\{d(\vec{k})\}$$
as follows from their Lemma 3.12. The Grothendieck classes of the $Y(T)$ 
correspond to what Brundan and Kleshchev call the {\em quasi-canonical} 
basis of $V_{\Lambda}$.  
\end{remark}

Let $T\in\mathrm{Std}^{(N^{\ell})}$ and 
suppose that 
$$A_T=E_{-i_1}^{(r_1)}\cdots E_{-i_l}^{(r_l)}v_{\Lambda},$$
as in~\eqref{eq:LTdef}. 
Then we 
can define a projective module $P_T\in \mathcal{V}_{\Lambda}^p$ by 
\begin{equation}
\label{eq:defPT}
P_T:=\mathcal{E}_{-i_1}^{(r_1)}\cdots \mathcal{E}_{-i_l}^{(r_l)}V({\Lambda}),
\end{equation}
where $V(\Lambda)=P_{T_{\Lambda}}$ 
is the highest weight object in $\mathcal{V}_{\Lambda}^p$. 
By~\eqref{eq:LTtildepsi-inv} and Theorem~\ref{thm:cyclKLR}, we have 
\begin{equation}
\label{eq:propPT}
P_T=\delta(A_T)\quad\text{and}\quad P_T^{\circledast}\cong P_T.
\end{equation}
By~\eqref{eq:LT2} and Theorem~\ref{thm:cyclKLR}, we have 
\begin{equation}
\label{eq:LT2cat}
[Q_T]=[P_T]+\sum_{S\prec T}\beta_{ST}(v) [P_S]
\end{equation}
in $K_0^v(\mathcal{V}_{\Lambda}^p)$.
%
%
%
%
\section{Categorified skew Howe duality}
\label{sec:cathowe}
\subsection{Equivalences}
\label{sec:Morita}
Recall the algebra $H(\vec{k},N)$ and the categories 
$\mathcal{W}(\vec{k},N)$, $\mathcal{W}^p(\vec{k},N)$ and 
$\mathcal{W}^{\circ}(\vec{k},N)$ which were defined in 
Definitions~\ref{def:web-category},~\ref{def:webalgebra} and~\ref{def:webalgebramod}.
\begin{definition}
\label{def:catwebspace}
Define 
\begin{eqnarray*}
H_{\Lambda}&:=&\bigoplus_{\vec{k}\in\Lambda(m,m)_N} H(\vec{k},N);\\
\mathcal{W}_{\Lambda}&:=&\bigoplus_{\vec{k}\in \Lambda(m,m)_N}\mathcal{W}(\vec{k},N);\\
\mathcal{W}^p_{\Lambda}&:=&\bigoplus_{\vec{k}\in \Lambda(m,m)_N}\mathcal{W}^p(\vec{k},N);\\
\mathcal{W}^{\circ}_{\Lambda}&:=&\bigoplus_{\vec{k}\in \Lambda(m,m)_N}\mathcal{W}^{\circ}(\vec{k},N).
\end{eqnarray*} 
\end{definition}
\noindent We denote the Karoubi envelope of 
$\mathcal{W}^{\circ}_{\Lambda}$ by $\dot{\mathcal{W}}^{\circ}_{\Lambda}$.

In Definition 9.1 in~\cite{my} 
Yonezawa and I defined a strong $\mathfrak{sl}_m$ 
$2$-representation on $\dot{\mathcal{W}}^{\circ}_{\Lambda}$. This was defined 
by sending $\idm_{\lambda'}\mathcal{E}_{\ui}\idm_{\lambda}$ 
to the functor defined by tensoring with the matrix factorization 
$$\oE_{\ui,[\vec{k}]}':=\oE_{\ui,[\vec{k}]}\{d(\vec{k})-d(\vec{k}')\}$$ 
which was associated to the ladder obtained by 
quantum skew Howe duality, for 
$\ui=(\epsilon_1i_1,\ldots,\epsilon_li_l)$, 
$\lambda,\lambda'\in P_{\Lambda}$ and $\phi_{m,m,N}(\lambda)=\vec{k}$ and 
$\phi_{m,m,N}(\lambda')=\vec{k}'$. 

The results in Section 9.2 of~\cite{my} can be summarized as follows:
\begin{theorem}
\label{thm:categorification1}
$\dot{\mathcal{W}}^{\circ}_\Lambda$ is a strong additive $\mathfrak{sl}_m$ 
$2$-representation equivalent to $\mathcal{V}_{\Lambda}^p$, such that 
the following square commutes
\begin{equation}
\label{eq:Moritasquare1}
\begin{CD}
V_ {\Lambda}&@>{\delta}>>&K_0^v(\mathcal{V}_{\Lambda}^p)\\
@V{}VV&&@VV{}V\\
W_{\Lambda}&@>{\delta^{\circ}}>>& K_0^v(\dot{\mathcal{W}}_{\Lambda}^{\circ}) 
\end{CD}.
\end{equation}
The map $\delta$ is the one from Theorem~\ref{thm:cyclKLR}. 
The map $\delta^{\circ}$ is determined by 
$$A^T\mapsto q^{-d(\vec{k})}[\hat{A}^T]=[\mathcal{E}_{-i_1}^{(r_1)}\cdots \mathcal{E}_{-i_l}^{(r_l)}\hat{w}_{\Lambda}],$$
for any $T\in \mathrm{Std}^{(N^{\ell})}$. 
The map $V_{\Lambda}\to W_{\Lambda}$ was defined in Remark~\ref{rem:VversusW}. 
All maps in the square are isometric isomorphisms of $\Uv{m}$-modules.     
\end{theorem}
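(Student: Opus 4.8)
The plan is to follow Section~9.2 of~\cite{my}. First I would upgrade the Karoubi-complete category $\dot{\mathcal{W}}^{\circ}_{\Lambda}$ to a strong $\mathfrak{sl}_m$ $2$-representation by exhibiting a $2$-functor out of $\u(\mathfrak{sl}_m)$ which sends $\idm_{\lambda'}\mathcal{E}_{\ui}\idm_{\lambda}$ to the endofunctor ``tensor with the ladder matrix factorization'' $\oE_{\ui,[\vec{k}]}'$, the grading shift by $d(\vec{k})-d(\vec{k}')$ being exactly the one dictated by~\eqref{eq:ExtvH}. The generating $2$-morphisms and relations of $\u(\mathfrak{sl}_m)$ (Definition~\ref{def:KL}) must then be realized and verified, up to homotopy equivalence, among these matrix-factorization functors; this is the same kind of computation that Wu and Yonezawa carried out for the web relations themselves (Theorem~\ref{thm:catwebrels}), now performed for the Khovanov--Lauda $2$-morphisms, with care taken that the signs match Cautis--Lauda's scalars $Q$ with $t_{ij}=-1$ for $j=i+1$. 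One also records, using~\cite{kr} as in Definition~\ref{def:web-category}, that $\dot{\mathcal{W}}^{\circ}_{\Lambda}$ is Krull--Schmidt and idempotent-complete.

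Next I would identify this $2$-representation as a categorification of $V_{\Lambda}$ of highest weight $\Lambda$, with highest weight object $\hat{w}_{\Lambda}$ (the web of $\ell$ vertical $N$-strands): one checks that $\mathrm{End}(\hat{w}_{\Lambda})$ is one-dimensional in degree zero, that $\hat{w}_{\Lambda}$ generates $\dot{\mathcal{W}}^{\circ}_{\Lambda}$ under the divided-power functors $\mathcal{E}_{-i}^{(a)}$ (by quantum skew Howe duality every ladder, hence every object, is obtained from $w_{\Lambda}$ by stacking rungs), and that the cyclotomic/nilpotency condition holds at $\Lambda$ (applying $\mathcal{E}_{+i}$ to $\hat{w}_{\Lambda}$ yields zero, the relevant rung being inadmissible). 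With these hypotheses verified, I would invoke the uniqueness of minimal/cyclotomic categorifications --- Rouquier~\cite{rou}, in the form used by Cautis and Lauda~\cite{cl} --- to conclude that there is an equivalence of strong $\mathfrak{sl}_m$ $2$-representations $$\Phi\colon \mathcal{V}_{\Lambda}^p\to \dot{\mathcal{W}}^{\circ}_{\Lambda}$$ with $\Phi(V(\Lambda))\cong \hat{w}_{\Lambda}$.

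Given $\Phi$, the commuting square is essentially formal. I would define $\delta^{\circ}$ as the composite of the inverse of the isomorphism $V_{\Lambda}\to W_{\Lambda}$ from Remark~\ref{rem:VversusW}, the map $\delta$ of Theorem~\ref{thm:cyclKLR}, and $K_0^v(\Phi)$; this makes~\eqref{eq:Moritasquare1} commute by construction. To evaluate $\delta^{\circ}$ on the LT-web basis, write $A_T=E_{-i_1}^{(r_1)}\cdots E_{-i_l}^{(r_l)}v_{\Lambda}$ as in~\eqref{eq:LTdef}. Then $A^T$ pulls back to $A_T$ by Corollary~\ref{cor:VversusW}, $\delta(A_T)=P_T=\mathcal{E}_{-i_1}^{(r_1)}\cdots \mathcal{E}_{-i_l}^{(r_l)}V(\Lambda)$ by~\eqref{eq:defPT} and~\eqref{eq:propPT}, and applying $\Phi$, which commutes with the $\mathcal{E}$-functors, sends this to $\mathcal{E}_{-i_1}^{(r_1)}\cdots \mathcal{E}_{-i_l}^{(r_l)}\hat{w}_{\Lambda}$, which is $q^{-d(\vec{k})}[\hat{A}^T]$ once the normalizing shift built into $\oE_{\ui,[\vec{k}]}'$ is accounted for.

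Finally, for the isometry statement: $\delta$ intertwines the $v$-Shapovalov form with the Euler form by Theorem~\ref{thm:cyclKLR}; the isomorphism $V_{\Lambda}\to W_{\Lambda}$ intertwines the $v$-Shapovalov form with the $v$-sesquilinear web form by Corollary~\ref{cor:webirrep}; and $K_0^v(\Phi)$ is an isometry for the Euler forms, since $\Phi$ preserves graded $\mathrm{HOM}$-spaces and, by~\eqref{eq:ExtvH}, the Euler form on $K_0^v(\dot{\mathcal{W}}^{\circ}_{\Lambda})$ is precisely what computes the web form. Composing the three isometries shows that all maps in the square are isometric isomorphisms of $\Uv{m}$-modules. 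The main obstacle is the content of the first paragraph: checking that the $\mathfrak{sl}_N$-web matrix factorizations satisfy the full list of Khovanov--Lauda relations up to coherent homotopy, with the correct signs, which is the genuinely technical part and is where the bulk of~\cite{my} lies; if one instead leans on the uniqueness theorem, the work shifts to verifying the generation and cyclotomic hypotheses, which is comparatively light.
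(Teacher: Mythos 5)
The paper itself gives no proof of this theorem: it is an explicit summary of Definition 9.1 and the results of Section 9.2 of~\cite{my}, and the paragraph preceding the statement already says the $2$-representation is defined by tensoring with the ladder matrix factorizations $\oE_{\ui,[\vec{k}]}'$ with the grading shift $d(\vec{k})-d(\vec{k}')$. Your sketch is a faithful reconstruction of exactly that route: build the $2$-functor on ladders, verify the Khovanov--Lauda relations up to homotopy (this is indeed where the bulk of~\cite{my} lives), identify $\hat{w}_\Lambda$ as the highest-weight object and appeal to a uniqueness theorem for cyclotomic $2$-representations to obtain $\Phi\colon\mathcal{V}_\Lambda^p\to\dot{\mathcal{W}}_\Lambda^\circ$, and then define $\delta^\circ$ as the composite of $\delta$, the isomorphism of Remark~\ref{rem:VversusW}, and $K_0^v(\Phi)$, which makes the square commute by construction. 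Your evaluation of $\delta^\circ(A^T)$ via \eqref{eq:LTdef}, \eqref{eq:defPT} and the normalizing shift $d(\vec{k})$, and the isometry argument chaining Theorem~\ref{thm:cyclKLR}, Corollary~\ref{cor:webirrep} and \eqref{eq:ExtvH}, all match the paper's intended meaning. In short, you take essentially the same approach as the one the paper cites; there is no genuine gap.
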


In this section I am first going to show that 
$\dot{\mathcal{W}}^{\circ}_{\Lambda}$ is 
equivalent to $\mathcal{W}^p_{\Lambda}$. Recall 
the definition of $P^T\in \mathcal{W}_{\Lambda}^p$ in 
Example~\ref{ex:leftandrightprojectives}.

\begin{definition}
\label{def:gammamap}
We define a linear map 
$$\delta_{\vec{k},N}'\colon W(\vec{k},N)\to K_0^v(\mathcal{W}^p(\vec{k},N))$$
by 
$$A^T\mapsto [P^T]
\in K_0^v(\mathcal{W}^p(\vec{k},N))$$
for all $T\in \mathrm{Std}^{(N^{\ell})}_{\vec{k}}$.   
\end{definition}
\noindent In Corollary~\ref{cor:categorification13} I will show that 
$\delta_{\vec{k},N}'$ is an isomorphism. For now, all I can show is 
injectivity.  
\begin{lemma}
\label{lem:injectivity}
The map $\delta_{\vec{k},N}'$ is injective.
\end{lemma}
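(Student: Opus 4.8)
The plan is to compare the bilinear forms on the two sides. On the domain $W(\vec{k},N)$ I have the symmetric $v$-bilinear web form $(\cdot,\cdot)$, which by Lemma~\ref{lem:bilinearforms} is just the restriction of Lusztig's form on $\Lambda_q^{\vec{k}}(\C_q^N)$; in particular it is nondegenerate, since on the standard basis tensors $x^{\tau}$ it is the standard dot product. On the codomain $K_0^v(\mathcal{W}^p(\vec{k},N))$ I have the Euler form $\langle[X],[Y]\rangle=\dim_q\mathrm{HOM}(X,Y)$ from Section~\ref{sec:conventions}. First I would establish that $\delta_{\vec{k},N}'$ is (up to the bar-involution, which is harmless here because the LT-web basis elements are $\psi$-invariant by~\eqref{eq:LTpsi-inv}) an isometry from the web form to the Euler form. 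Concretely, for $S,T\in\mathrm{Std}^{(N^{\ell})}_{\vec{k}}$ I compute
$$
\langle [P^S],[P^T]\rangle = \dim_q\mathrm{HOM}(P^S,P^T) = \dim_q\, {}_SH(\vec{k},N)_T\{d(\vec{k})\}^{-1}\{d(\vec{k})\}^{-1}\cdot(\text{shift}),
$$
which by Definition~\ref{def:webalgebra} and~\eqref{eq:ExtvH} equals $\dim_q \mathrm{EXT}(\hat{A}^S,\hat{A}^T)$ up to the normalizing shift $\{d(\vec{k})\}$ that was built into $P^T$ precisely so that $P^T\cong(P^T)^{\vee}$ (Example~\ref{ex:leftandrightprojectives}). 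On the other hand, $\langle A^S,A^T\rangle_{\mathrm{web}} = v^{d(\vec{k})}\mathrm{ev}((A^S)^*A^T)$ by~\eqref{eq:webbracket}, and by~\eqref{eq:ExtvH} this graded dimension of $H(\widehat{(A^S)^*A^T})\{d(\vec{k})\}$ matches, since the state-sum computation of $\mathrm{ev}$ and the cohomology of the associated matrix factorization are identified via the web matrix factorization formalism of~\cite{my,wu,yo1}. So $\delta_{\vec{k},N}'$ intertwines the web form with the Euler form.

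The injectivity then follows formally: if $\delta_{\vec{k},N}'(w)=0$ for some $w\in W(\vec{k},N)$, then $\langle \delta_{\vec{k},N}'(w),\delta_{\vec{k},N}'(u)\rangle=0$ for all $u$, hence $\langle w,u\rangle_{\mathrm{web}}=0$ for all $u\in W(\vec{k},N)$; since the web form equals the restriction of Lusztig's nondegenerate bilinear form (Lemma~\ref{lem:bilinearforms}), this forces $w=0$. I should be slightly careful that the web form is $v$-sesquilinear rather than bilinear, but the relevant comparison is with the symmetric $v$-bilinear web form $(\cdot,\cdot)$, which is nondegenerate on $W(\vec{k},N)$ because $(w,w)=\sum_{\tau}c^{\tau}(v)^2$ by Lemma~\ref{lem:bilinearforms}; if this vanishes (as an element of $\C(v)$) then every $c^{\tau}(v)=0$, so $w=0$. (Equivalently: the Euler form on $K_0^v$ is nonnegative in the sense that $\langle[X],[X]\rangle\in\N[q,q^{-1}]$, and $\langle[P^T],[P^T]\rangle$ has constant term $\geq 1$, so the classes $[P^T]$ paired with $w$ detect $w$.)

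The main obstacle I anticipate is not the formal endgame but the bookkeeping in the first step: matching the grading shifts $\{d(\vec{k})\}$ in the definition of $H(\vec{k},N)$, in the normalization of $P^T$, and in the web form's factor $v^{d(\vec{k})}$, so that the isometry statement comes out with no leftover shift. This requires carefully tracking~\eqref{eq:ExtvH}, the Frobenius shift in Proposition~\ref{prop:Frob}, and the state-sum/cohomology identification; the argument in Lemma~\ref{lem:bilinearforms} already does exactly this kind of shift accounting for $\mathrm{ev}(w^*w)$, so I would model this computation on that proof. An alternative, slightly softer route avoiding the precise isometry is: note $\delta_{\vec{k},N}'$ sends the LT-web basis $\{A^T\}$ to $\{[P^T]\}$, and use Proposition~\ref{prop:triangwebtensor} together with the triangularity of the decomposition matrices $[P_T]$ in $\mathcal{V}_{\Lambda}^p$ (via~\eqref{eq:LT2cat} and Theorem~\ref{thm:categorification1}) to argue that the $[P^T]$ are linearly independent over $\C(v)$; but since the clean statement ``web form $=$ Euler form'' is what is really wanted later anyway, I would do the isometry computation.
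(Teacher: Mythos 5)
Your approach — show that $\delta_{\vec{k},N}'$ intertwines the web form with the Euler form, then invoke nondegeneracy — is exactly the paper's proof, and the isometry computation you sketch (via Definition~\ref{def:webalgebra}, \eqref{eq:ExtvH}, Theorem~\ref{thm:catwebrels} and the $v^{d(\vec{k})}$ factor in \eqref{eq:webbracket}) is the right chain of identifications. Two small corrections, though. First, the shift bookkeeping in your displayed line is off: since $P^S$ and $P^T$ carry the \emph{same} shift $\{-d(\vec{k})\}$, these shifts cancel in $\mathrm{HOM}(P^S,P^T)$, so one gets $\dim_v\,{}_SH(\vec{k},N)_T$ with no leftover factor; the single factor $v^{d(\vec{k})}$ only appears at the next step, when translating $\mathrm{EXT}(\hat A^S,\hat A^T)$ into $\mathrm{ev}((A^S)^*A^T)$ via \eqref{eq:ExtvH}, and it matches the $v^{d(\vec{k})}$ in \eqref{eq:webbracket}.

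Second, and more substantively, your nondegeneracy argument for the web form does not go through as stated. You argue via Lemma~\ref{lem:bilinearforms} that $(w,w)=\sum_\tau c^\tau(v)^2=0$ forces every $c^\tau(v)=0$; but $W(\vec{k},N)$ is a $\C(v)$-vector space and $\C(v)$ is not formally real (e.g. $1^2+i^2=0$), so a vanishing sum of squares does not kill the coefficients. Lemma~\ref{lem:bilinearforms} is anyway only stated for monomial webs. Also, "restriction of a nondegenerate form" is not automatically nondegenerate on a subspace. The paper instead gets nondegeneracy from Corollary~\ref{cor:webirrep}: the $v$-sesquilinear web form equals the $v$-Shapovalov form on the irreducible module $W_{\Lambda}$, which is nondegenerate (weight space by weight space), and restricting to the $\vec{k}$-weight space gives nondegeneracy of the form on $W(\vec{k},N)$. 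Replacing your sum-of-squares step with this citation closes the gap.
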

\begin{proof}
Recall that the Euler form 
$$\langle [P],[Q]\rangle=\dim_v \mathrm{HOM}(P,Q)$$ 
is a non-degenerate $v$-sesquilinear form on 
$K_0^v(H(\vec{k},N))$. Furthermore, the $v$-sesquilinear web form 
gives a non-degenerate $v$-sesquilinear form 
on $W(\vec{k},N)$. 

The map $\delta_{\vec{k},N}'$ is an isometry w.r.t. these two forms 
because we have 
$$\dim_v \mathrm{HOM}(P^S,P^T)=\dim_v 
{}_SH(\vec{k},N)_T=$$
$$\dim_v \mathrm{EXT}(\hat{A}^S,\hat{A}^T) 
=v^{d(\vec{k})}\dim_v H^*(\widehat{(A^S)^*A^T})=$$
$$v^{d(\vec{k})}\mathrm{ev}((A^S)^*A^T)=\langle A^S,A^T\rangle.$$
for any $S,T\in \mathrm{Std}^{(N^{\ell})}_{\vec{k}}$, 
by Theorem~\ref{thm:catwebrels}. 

Since isometries for non-degenerate forms are always injective, this proves 
the lemma.  
\end{proof}

\begin{lemma}
\label{lem:embeddingwebalgintowebcat}
There exists an equivalence of additive categories  
$$\mathcal{W}^p(\vec{k},N) \cong \dot{\mathcal{W}^{\circ}}(\vec{k},N).$$
\end{lemma}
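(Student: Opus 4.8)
The plan is to produce an explicit additive equivalence between $\mathcal{W}^p(\vec{k},N) = H(\vec{k},N)\text{-}\mathrm{pmod}_{\mathrm{gr}}$ and $\dot{\mathcal{W}^{\circ}}(\vec{k},N)$, the Karoubi envelope of the web matrix-factorization category. First I would recall the standard fact that for a finite-dimensional graded algebra $A = \bigoplus_{i=1}^s {}_iA_j$ with a complete set of orthogonal idempotents indexed by a finite set, the category $A\text{-}\mathrm{pmod}_{\mathrm{gr}}$ is equivalent to the Karoubi envelope of the full (graded) subcategory of $A\text{-}\mathrm{mod}_{\mathrm{gr}}$ on the indecomposable-summand-generating objects $\{Ae_i\}$; equivalently, $A\text{-}\mathrm{pmod}_{\mathrm{gr}} \cong \dot{\mathcal{C}}$ where $\mathcal{C}$ is any small additive category with $\mathrm{END}_{\mathcal{C}}(\bigoplus_i X_i) \cong A$ as graded algebras. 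Here the relevant small category is the full subcategory $\mathcal{C}$ of the homotopy category of matrix factorizations on the objects $\{\hat{A}^T \mid T \in \mathrm{Std}^{(N^\ell)}_{\vec{k}}\}$, and by Definition~\ref{def:webalgebra} we have precisely
$$\mathrm{END}\Bigl(\bigoplus_{T} \hat{A}^T\Bigr) = \bigoplus_{S,T} \mathrm{EXT}(\hat{A}^S,\hat{A}^T) \cong H(\vec{k},N)\{-d(\vec{k})\},$$
the degree shift coming from the normalization in Example~\ref{ex:leftandrightprojectives} (this is why $P^T$ and ${}^TP$ were shifted by $\{-d(\vec{k})\}$ there). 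So the functor would send the projective $P^T = H(\vec{k},N)1_{\hat{A}^T}\{-d(\vec{k})\}$ to $\hat{A}^T$ and extend additively and idempotent-completely.

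The key steps, in order, are: (1) check that $\dot{\mathcal{W}^{\circ}}(\vec{k},N)$ really is the Karoubi envelope of the full additive subcategory generated by the $\hat{A}^T$ — that is, every object of $\mathcal{W}^{\circ}(\vec{k},N)$ (a direct sum of $\hat{u}$ for $u$ an $N$-ladder) is, up to isomorphism in the Karoubi envelope, a summand of a direct sum of the $\hat{A}^T$; (2) identify the graded endomorphism algebra of $\bigoplus_T \hat{A}^T$ with $H(\vec{k},N)$ up to the global shift, which is immediate from Definition~\ref{def:webalgebra} and~\eqref{eq:ExtvH}; (3) invoke the general Morita-theoretic statement that for a Krull–Schmidt additive category $\mathcal{D}$ with a generator $X$ (meaning every object is a summand of $X^{\oplus k}$ for some $k$), one has $\mathcal{D} \cong \mathrm{END}(X)\text{-}\mathrm{pmod}_{\mathrm{gr}}$ via $Y \mapsto \mathrm{HOM}(X,Y)$; (4) conclude, using that $\dot{\mathcal{W}^{\circ}}(\vec{k},N)$ is Krull–Schmidt (noted after Definition~\ref{def:web-category}). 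Step (2) also rests on Theorem~\ref{thm:catwebrels} (web relations hold up to homotopy) and on the fact, from Proposition~\ref{prop:triangwebtensor} together with the unitriangularity $A^T = x^T + \sum_{\tau \prec T}\alpha_{\tau,T}(v)x^\tau$, that the $A^T$ for $T \in \mathrm{Std}^{(N^\ell)}_{\vec{k}}$ span $W(\vec{k},N)$, hence the $\hat{A}^T$ generate the category in the required sense — every ladder $\hat{u}$ decomposes, in the Karoubi envelope, into summands already appearing among the $\hat{A}^T$'s indecomposable pieces.

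The main obstacle I expect is step (1): showing that the $\hat{A}^T$ genuinely \emph{generate} $\dot{\mathcal{W}^{\circ}}(\vec{k},N)$ as a Karoubi envelope, i.e. that every ladder matrix factorization $\hat{u}$ in $W(\vec{k},N)$ is isomorphic, after idempotent completion, to a direct summand of a finite direct sum of the $\hat{A}^T$. The point is that $\{A^T \mid T \in \mathrm{Std}^{(N^\ell)}_{\vec{k}}\}$ is a basis of the web space $W(\vec{k},N)$ by Leclerc–Toffin theory (Proposition~\ref{prop:triangwebtensor} plus the fact that the $x^T$ with $T$ semistandard are exactly the zero-weight basis parametrized by $\mathrm{Std}$), so the Grothendieck class $[\hat{u}]$ lies in the $\Z[v,v^{-1}]$-span of the $[\hat{A}^T]$; but passing from a Grothendieck-class statement to an actual direct-sum decomposition requires the categorified quantum skew Howe duality machinery — one uses that $\dot{\mathcal{W}}^{\circ}_\Lambda$ carries a strong $\mathfrak{sl}_m$ $2$-representation (Theorem~\ref{thm:categorification1}) equivalent to $\mathcal{V}_\Lambda^p$, in which every indecomposable object is, by the corresponding statement for $R_\Lambda$ and the structure of divided-power $1$-morphisms, a summand of some $\mathcal{E}_{\ui}^{(\ldots)}\hat{w}_\Lambda = \hat{A}^T$-type object. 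I would handle this by transporting the question across the equivalence $\dot{\mathcal{W}}^{\circ}_\Lambda \cong \mathcal{V}_\Lambda^p$ and using that the $P_T$ of~\eqref{eq:defPT} generate $\mathcal{V}_\Lambda^p$ as a Karoubi envelope, which is part of the Brundan–Kleshchev picture recalled in Theorem~\ref{thm:cyclKLR} and~\eqref{eq:LT2cat}. The rest — functoriality, fullness, faithfulness, compatibility of gradings — is then the routine Morita bookkeeping of step (3).
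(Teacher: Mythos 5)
Your proposal is correct in substance, but for the essential--surjectivity half it takes a different route from the paper's. The paper also constructs the functor $P^T\{t\}\mapsto \hat{A}^T\{t-d(\vec{k})\}$ and reads full faithfulness off Definition~\ref{def:webalgebra}; it then handles essential surjectivity by a ``squeeze'' on Grothendieck groups: the fully faithful embedding induces an injection $K_0^v(\mathcal{W}^p(\vec{k},N))\hookrightarrow K_0^v(\dot{\mathcal{W}}^{\circ}(\vec{k},N))$, and since the composite $W(\vec{k},N)\hookrightarrow K_0^v(\mathcal{W}^p(\vec{k},N))\hookrightarrow K_0^v(\dot{\mathcal{W}}^{\circ}(\vec{k},N))$ is the isomorphism $\delta^{\circ}$ from Theorem~\ref{thm:categorification1} while the first arrow is injective by Lemma~\ref{lem:injectivity}, both arrows must be isomorphisms, and Krull--Schmidtness then promotes surjectivity on $K_0$ to essential surjectivity. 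You instead attack generation directly: transport across the equivalence $\dot{\mathcal{W}}^{\circ}_{\Lambda}\cong\mathcal{V}_{\Lambda}^p$ of Theorem~\ref{thm:categorification1} and use that the $P_T$ generate $\mathcal{V}_{\Lambda}^p$ in the Karoubi sense. That is valid, and it trades the paper's reliance on Lemma~\ref{lem:injectivity} (the isometry computation) for a generation statement about $\mathcal{V}_{\Lambda}^p$. But the latter is not literally stated in Theorem~\ref{thm:cyclKLR} or~\eqref{eq:LT2cat}; you should spell out the standard inversion: the unitriangular relation $[Q_T]=[P_T]+\sum_{S\prec T}\beta_{ST}(v)[P_S]$ inverts to $[P_T]=[Q_T]+\sum_{S\prec T}\gamma_{ST}(v)[Q_S]$ with $\gamma_{ST}(v)\in\Z[v,v^{-1}]$, and since $P_T$ decomposes in the Krull--Schmidt category $\mathcal{V}_{\Lambda}^p$ as a direct sum of shifts of the $Q_S$ (which form the complete list of indecomposables), uniqueness of coefficients forces $Q_T$ to occur as a summand of $P_T$ with multiplicity one. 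Also one small slip: $\mathrm{END}\bigl(\bigoplus_T\hat{A}^T\bigr)\cong H(\vec{k},N)$ exactly, with no overall shift --- the $\{-d(\vec{k})\}$ in the normalization of $P^T$ cancels in $\mathrm{HOM}(P^S,P^T)$, which is precisely why the paper can identify $\mathrm{HOM}(P^S,P^T)$ with $\mathrm{EXT}(\hat{A}^S,\hat{A}^T)$ verbatim.
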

\begin{proof}
First define the image of projective modules associated to 
basis webs. For any $T\in \mathrm{Std}^{(N^{\ell})}_{\vec{k}}$ and any 
$t\in \Z$, we define 
\begin{equation}
\label{eq:embeddingwebalgintowebcat}
\mathcal{W}^p(\vec{k},N) \ni P^T\{t\}\mapsto \hat{A}^T\{t-d(\vec{k})\}\in 
\dot{\mathcal{W}^{\circ}}(\vec{k},N).
\end{equation}
By definition, for any pair $S,T\in \mathrm{Std}^{(N^{\ell})}_{\vec{k}}$ we have 
$$\mathrm{HOM}(P^S,P^T)=\mathrm{EXT}(\hat{A}^S\{-d(\vec{k})\},
\hat{A}^T\{-d(\vec{k})\})=
\mathcal{W}^{\circ}(\hat{A}^S\{-d(\vec{k})\},\hat{A}^T\{-d(\vec{k})\}),$$
where the latter is the hom-space in $\dot{\mathcal{W}^{\circ}}(\vec{k},N)^*$. 
So any intertwiner in $\mathrm{HOM}(P^S,P^T)$ can be mapped to 
the corresponding morphism in 
$\mathcal{W}^{\circ}(\hat{A}^S\{-d(\vec{k})\},\hat{A}^T\{-d(\vec{k})\})$. 

Any indecomposable object in 
$\mathcal{W}^p(\vec{k},N)$ has to be isomorphic to a direct summand of 
$P^T\{t\}$, for some 
$T\in \mathrm{Std}^{(N^{\ell})}_{\vec{k}}$ and $t\in\Z$, 
because by definition we have 
$$H(\vec{k},N)=\bigoplus_{T\in \mathrm{Std}^{(N^{\ell})}_{\vec{k}}}P^T\{d(\vec{k})\}.$$    
Therefore~\eqref{eq:embeddingwebalgintowebcat} determines the desired embedding 
up to natural isomorphism. 

To see this, let $Q$ be an indecomposable object in 
$\mathcal{W}^{p}(\vec{k},N)$. There exists a $T\in 
\mathrm{Std}^{(N^{\ell})}_{\vec{k}}$ and $t\in\Z$ such 
that $Q$ is isomorphic to a direct summand of $P^T\{t\}$ given by 
a primitive idempotent $e_Q \in \mathrm{End}(P^T\{t\})$. We define  
$$Q\mapsto (A^T\{t\},e_Q)\in \dot{\mathcal{W}^{\circ}}(\vec{k},N).$$
The choice of $T$ and $t$ need not be unique, but we simply choose one. A 
different choice will lead to a naturally isomorphic functor.  

This gives a well-defined functor 
\begin{equation}
\label{eq:embeddingWpW0}
\mathcal{W}^p(\vec{k},N) \hookrightarrow 
\dot{\mathcal{W}^{\circ}}(\vec{k},N),
\end{equation}
which is fully faithful.  
\vskip0.2cm
We now have to show that this functor is essentially surjective. It suffices 
to show that any indecomposable object in $\dot{\mathcal{W}^{\circ}}(\vec{k},N)$ 
is isomorphic to the image of an indecomposable object in 
$\mathcal{W}^p(\vec{k},N)$. Since the functor 
in~\eqref{eq:embeddingWpW0} is fully faithful, 
it induces an embedding 
$$
K_0^v(\mathcal{W}^p(\vec{k},N))\hookrightarrow 
K_0^v(\dot{\mathcal{W}}^{\circ}(\vec{k},N))
$$
which sends Grothendieck classes of indecomposables to 
Grothendieck classes of indecomposables. Therefore, all we have to 
do is show that this map between the Grothendieck groups is surjective. 
We do this by proving that both vector spaces have the same dimension. 

In Lemma~\ref{lem:injectivity}, I showed that the linear map 
$$\delta_{\vec{k},N}'\colon W(\vec{k},N)\to K_0^v(\mathcal{W}^p(\vec{k},N))$$
from Definition~\ref{def:gammamap} is injective. 

So we have a sequence of two embeddings
\begin{equation}
\label{eq:twoembeddings}
W(\vec{k},N)\hookrightarrow K_0^v(\mathcal{W}^p(\vec{k},N))\hookrightarrow 
K_0^v(\dot{\mathcal{W}}^{\circ}(\vec{k},N))
\end{equation}
and their composite is an isomorphism by 
Theorem~\ref{thm:categorification1}. This 
shows that both embeddings in~\eqref{eq:twoembeddings} are isomorphisms, 
which finishes the proof of this lemma. 
\end{proof}
In the proof of Lemma~\ref{lem:embeddingwebalgintowebcat} we have 
also obtained the following result.  
\begin{corollary}
\label{cor:categorification13}
The linear map $\delta_{\vec{k},N}'\colon W(\vec{k},N)\to 
K_0^v(\mathcal{W}^p(\vec{k},N))$ is an isomorphism. 
\end{corollary}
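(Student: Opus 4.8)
The plan is to recognize $\delta_{\vec{k},N}'$ as the first map in a composite of two injections whose total composite is already known to be an isomorphism, and then to invoke a two-out-of-three argument.

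First I would recall from Lemma~\ref{lem:injectivity} that $\delta_{\vec{k},N}'$ is injective. Next, the fully faithful additive functor $\mathcal{W}^p(\vec{k},N)\hookrightarrow\dot{\mathcal{W}^{\circ}}(\vec{k},N)$ constructed in the proof of Lemma~\ref{lem:embeddingwebalgintowebcat} induces a map on split Grothendieck groups
$$K_0^v(\mathcal{W}^p(\vec{k},N))\to K_0^v(\dot{\mathcal{W}}^{\circ}(\vec{k},N)),$$
which is injective: both categories are Krull--Schmidt, so a fully faithful additive functor sends indecomposables to indecomposables and pairwise non-isomorphic ones to pairwise non-isomorphic ones, hence is injective on $K_0$, and this persists after $\otimes_{\Z[q,q^{-1}]}\C(v)$.

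Then I would check that the composite
$$W(\vec{k},N)\hookrightarrow K_0^v(\mathcal{W}^p(\vec{k},N))\hookrightarrow K_0^v(\dot{\mathcal{W}}^{\circ}(\vec{k},N))$$
is exactly the $\vec{k}$-component of $\delta^{\circ}$ from Theorem~\ref{thm:categorification1}. Indeed $\delta_{\vec{k},N}'$ sends $A^T\mapsto [P^T]$, and then by~\eqref{eq:embeddingwebalgintowebcat} we have $[P^T]\mapsto [\hat{A}^T\{-d(\vec{k})\}] = q^{-d(\vec{k})}[\hat{A}^T]$, which is precisely $\delta^{\circ}(A^T)$. Since $\delta^{\circ}$ is an isomorphism by Theorem~\ref{thm:categorification1}, the composite of the two embeddings is an isomorphism; and an isomorphism that factors as $g\circ f$ with $f,g$ injective forces both $f$ and $g$ to be bijective. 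In particular $\delta_{\vec{k},N}'$ is an isomorphism.

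The argument is essentially bookkeeping, so there is no substantial obstacle; the one point deserving care is verifying that the induced map on Grothendieck groups is genuinely injective (this uses Krull--Schmidt on both sides, which in turn rests on Propositions~24 and~25 in~\cite{kr}) and that the grading shifts in~\eqref{eq:embeddingwebalgintowebcat} are set up so that the composite is literally $\delta^{\circ}$ and not merely a rescaling.
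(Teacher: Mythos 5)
Your proposal reproduces the paper's argument almost verbatim: the paper derives this corollary as an immediate byproduct of the proof of Lemma~\ref{lem:embeddingwebalgintowebcat}, using exactly the same chain of injections $W(\vec{k},N)\hookrightarrow K_0^v(\mathcal{W}^p(\vec{k},N))\hookrightarrow K_0^v(\dot{\mathcal{W}}^{\circ}(\vec{k},N))$ whose composite is the isomorphism $\delta^{\circ}$ from Theorem~\ref{thm:categorification1}. The one point you flag for verification --- that the grading shift in~\eqref{eq:embeddingwebalgintowebcat} makes the composite literally equal to $\delta^{\circ}$ rather than a rescaling of it --- is left implicit in the paper, so your explicit check is a sensible addition, but the route is the same.
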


Let $\delta'\colon W_{\Lambda}\to K_0^v(\mathcal{W}^p_{\Lambda})$
be the bijective $\Uv{m}$-intertwiner defined by 
$$\delta':=\bigoplus_{\vec{k}\in \Lambda(m,m)_N}\delta_{\vec{k},N}'.$$
We can now prove our main result.  
\begin{theorem}
\label{thm:categorification2}
There exists a strong $\mathfrak{sl}_m$ $2$-representation on the 
Abelian category $\mathcal{W}_{\Lambda}$, which can be restricted to 
$\mathcal{W}_{\Lambda}^p$.

There exists   
a duality on $\mathcal{W}_{\Lambda}$, also denoted $\circledast$, 
which commutes with the action of $\mathcal{E}_{\pm i}\idm_{\lambda}$, for 
any $i=1,\ldots,m-1$ and $\lambda\in\Z^{m-1}$. This duality can be 
restricted to $\mathcal{W}_{\Lambda}^p$. 

Moreover, up to natural isomorphism, there exists a unique equivalence
$$\mathcal{W}_{\Lambda}\to \mathcal{V}_{\Lambda}$$
of Abelian strong $\mathfrak{sl}_m$ $2$-representations intertwining 
the dualities, which can be restricted to an equivalence 
$$\mathcal{V}_{\Lambda}^p\to \mathcal{W}_{\Lambda}^p$$
such that the following square commutes 
\begin{equation}
\label{eq:Moritasquare}
\begin{CD}
W_ {\Lambda}&@>{\delta'}>>&K_0^v(\mathcal{W}_{\Lambda}^p)\\
@V{}VV&&@VV{}V\\
V_{\Lambda}&@>{\delta}>>& K_0^v(\mathcal{V}_{\Lambda}^p) 
\end{CD}.
\end{equation}
All maps in~\eqref{eq:Moritasquare} are $\Uv{m}$-intertwiners, 
isometries and intertwine the relevant involutions.  
\end{theorem}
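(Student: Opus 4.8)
The plan is to deduce everything from Theorem~\ref{thm:categorification1} and Lemma~\ref{lem:embeddingwebalgintowebcat} by transport of structure, and then extend from projectives to all modules by graded Morita theory. First I would take the additive equivalence $\mathcal{W}^p(\vec{k},N)\cong\dot{\mathcal{W}^{\circ}}(\vec{k},N)$ of Lemma~\ref{lem:embeddingwebalgintowebcat}, sum it over $\vec{k}\in\Lambda(m,m)_N$, and use it to transport the strong $\mathfrak{sl}_m$ $2$-representation on $\dot{\mathcal{W}}^{\circ}_{\Lambda}$ (Theorem~\ref{thm:categorification1}) to $\mathcal{W}^p_{\Lambda}$; with this definition the equivalence is $2$-representation compatible by construction, and composing with the equivalence $\dot{\mathcal{W}}^{\circ}_{\Lambda}\cong\mathcal{V}^p_{\Lambda}$ of Theorem~\ref{thm:categorification1} gives an equivalence of strong $\mathfrak{sl}_m$ $2$-representations $\mathcal{V}^p_{\Lambda}\cong\mathcal{W}^p_{\Lambda}$. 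On Grothendieck groups this equivalence exchanges $[P_T]$ and $[P^T]$: Lemma~\ref{lem:embeddingwebalgintowebcat} sends $P^T$ to $\hat A^T\{-d(\vec{k})\}$, while by the formula for $\delta^{\circ}$ in Theorem~\ref{thm:categorification1}, together with $\delta(A_T)=[P_T]$ from~\eqref{eq:propPT} and Corollary~\ref{cor:VversusW}, the image of $[P_T]$ is $q^{-d(\vec{k})}[\hat A^T]=[\hat A^T\{-d(\vec{k})\}]$. Consequently the map $\delta'=\bigoplus_{\vec{k}}\delta'_{\vec{k},N}$ of Definition~\ref{def:gammamap} coincides on the LT-web basis $\{A^T\}$ with the composite of the map $W_{\Lambda}\to V_{\Lambda}$ of Remark~\ref{rem:VversusW}, the map $\delta$, and $K_0^v$ of the equivalence; hence the square~\eqref{eq:Moritasquare} commutes, $\delta'$ is a $\Uv{m}$-intertwiner (since $\delta$ and the equivalence are), and $\delta'$ is an isometry for the web form and the Euler form, this last point being exactly the computation $\dim_v\mathrm{HOM}(P^S,P^T)=\langle A^S,A^T\rangle$ made in the proof of Lemma~\ref{lem:injectivity}. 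Since $\delta'$ is also an isomorphism by Corollary~\ref{cor:categorification13}, all four maps in~\eqref{eq:Moritasquare} are isometric isomorphisms of $\Uv{m}$-modules.

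Next I would extend the Morita equivalence from projectives to all modules. As $\mathcal{V}^p_{\Lambda}=R_{\Lambda}-\mathrm{pmod}_{\mathrm{gr}}$ and $\mathcal{W}^p_{\Lambda}=H_{\Lambda}-\mathrm{pmod}_{\mathrm{gr}}$ with $R_{\Lambda}$ and $H_{\Lambda}$ finite-dimensional graded algebras, the additive equivalence above is given by tensoring with a graded $H_{\Lambda}$-$R_{\Lambda}$-bimodule projective on both sides, so graded Morita theory yields an extension, unique up to natural isomorphism and compatible with grading shifts and the restriction functors, to an equivalence $\mathcal{V}_{\Lambda}\cong\mathcal{W}_{\Lambda}$ of the full module categories. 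Transporting Brundan and Kleshchev's strong $\mathfrak{sl}_m$ $2$-representation and their duality $\circledast$ on $\mathcal{V}_{\Lambda}$ along this equivalence produces a strong $\mathfrak{sl}_m$ $2$-representation and a duality $\circledast$ on $\mathcal{W}_{\Lambda}$; exactness, the biadjunctions, the nil-Hecke relations and the commutation of $\circledast$ with the functors $\mathcal{E}_{\pm i}\idm_{\lambda}$ are all preserved because we transport along an equivalence of abelian categories. Since the $2$-representation functors and $\circledast$ on $\mathcal{V}_{\Lambda}$ preserve projectives, the transported structures restrict to $\mathcal{W}^p_{\Lambda}$, and this restriction agrees with the $2$-representation built in the first step. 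Concretely one may then record the anti-involution $*\colon H_{\Lambda}\to H_{\Lambda}$ corresponding to Khovanov and Lauda's $*$ on $R_{\Lambda}$ and identify $\circledast$ with the associated duality $M\mapsto M^{\vee}$ equipped with the $*$-twisted action.

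It remains to match the involutions and to prove uniqueness. The left-hand map $W_{\Lambda}\to V_{\Lambda}$ intertwines $\psi$ with $\tilde\psi$: both bar-involutions fix the highest weight vectors $w_{\Lambda}$, $v_{\Lambda}$ and are semilinear over $\Uv{m}$, and the map of Remark~\ref{rem:VversusW} is a $\Uv{m}$-intertwiner sending $w_{\Lambda}$ to $v_{\Lambda}$. By Theorem~\ref{thm:cyclKLR}, $\delta$ intertwines $\tilde\psi$ with the $\circledast$-induced involution on $K_0^v(\mathcal{V}^p_{\Lambda})$; transporting, $K_0^v$ of the equivalence $\mathcal{V}^p_{\Lambda}\cong\mathcal{W}^p_{\Lambda}$ intertwines that involution with the $\circledast$-induced involution on $K_0^v(\mathcal{W}^p_{\Lambda})$, so by commutativity of~\eqref{eq:Moritasquare} the map $\delta'$ intertwines $\psi$ with the latter. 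Combined with the isometry statements for $\delta$ (Theorem~\ref{thm:cyclKLR}), for $W_{\Lambda}\to V_{\Lambda}$ (Corollary~\ref{cor:webirrep}) and for equivalences (which preserve $\mathrm{HOM}$-spaces and hence the Euler form), all maps in~\eqref{eq:Moritasquare} have the required properties. For uniqueness, any equivalence $\mathcal{W}_{\Lambda}\to\mathcal{V}_{\Lambda}$ of abelian strong $\mathfrak{sl}_m$ $2$-representations intertwining the dualities respects the weight decompositions (the weights being objects of $\u(\mathfrak{sl}_m)$) and must therefore send the highest weight object of $\mathcal{W}_{\Lambda}$ to that of $\mathcal{V}_{\Lambda}$, with grading normalization fixed by $\circledast$-self-duality; since every object is obtained from the highest weight object by applying the functors $\mathcal{E}_{\pm i}\idm_{\lambda}$ and taking summands, two such equivalences agree on a generating set and are naturally isomorphic. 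This is the ``general argument'' alluded to in the introduction.

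The step I expect to be the main obstacle is making the uniqueness argument fully precise: one must check that the highest weight object together with the normalization imposed by $\circledast$-self-duality is rigid enough to determine the equivalence on all of $\mathcal{W}_{\Lambda}$ up to natural isomorphism, i.e. that the compatibility constraints (equivalence of $2$-representations, intertwining $\circledast$, and commuting with~\eqref{eq:Moritasquare}) leave no further freedom. A secondary technical point, routine but which must be stated carefully, is that the passage from $\mathrm{pmod}_{\mathrm{gr}}$ to $\mathrm{mod}_{\mathrm{gr}}$ via graded Morita theory is itself unique up to natural isomorphism and compatible with grading shifts, so that the transported $2$-representation and duality on $\mathcal{W}_{\Lambda}$ are well defined.
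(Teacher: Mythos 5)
Your proposal is correct and takes essentially the same route as the paper: both ultimately obtain the equivalence $\mathcal{V}_{\Lambda}\simeq\mathcal{W}_{\Lambda}$ from the already established equivalence of projective module categories (Theorem~\ref{thm:categorification1} plus Lemma~\ref{lem:embeddingwebalgintowebcat}) combined with graded Morita theory, and both transport Brundan--Kleshchev's strong $2$-representation and duality across that equivalence. The main presentational difference is the direction of the Morita step: the paper constructs the $R_{\Lambda}$--$H_{\Lambda}$ bimodule $P_{\Lambda}=\bigoplus_{\vec{k}}\bigoplus_{T}P_T\{d(\vec{k})\}$ explicitly, shows $\mathrm{END}(P_{\Lambda})\cong H_{\Lambda}^{\mathrm{opp}}$ and that $P_{\Lambda}$ is a projective generator (via \eqref{eq:LT2}, Theorem~\ref{thm:cyclKLR} and \eqref{eq:LT2cat}), and then defines the equivalence as $M\mapsto P_{\Lambda}\otimes_{H_{\Lambda}}M$; you instead start from the additive equivalence of $\mathrm{pmod}_{\mathrm{gr}}$ categories and invoke Morita theory to extend it. These are equivalent, but the paper's version pays off by producing the concrete ``sweet'' bimodules $H_{\ui}(\vec{k},N)$ realizing the transported $2$-action, which makes the grading shifts and the relation between $2$-morphisms and bimodule maps explicit rather than purely formal. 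Your version, on the other hand, addresses the uniqueness clause more explicitly than the paper does (the paper's proof asserts uniqueness of the induced $2$-representation on $\mathcal{W}_{\Lambda}$ but does not argue uniqueness of the equivalence itself); your generation-from-the-highest-weight-object sketch is the right idea, though, as you correctly flag, it needs to be tightened to a full argument --- in particular one should reduce to uniqueness on projectives (where Morita theory determines everything) rather than appealing to generation of arbitrary modules by the $\mathcal{E}_{\pm i}$.
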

\begin{proof}
In~\eqref{eq:defPT} I defined a projective module 
$P_T\in \mathcal{V}_{\Lambda}^p$, for any $T\in \mathrm{Std}^{(N^{\ell})}$. 
By Theorem~\ref{thm:categorification1}, we have 
\begin{equation}
\label{eq:firstiso}
\mathrm{HOM}(P_S,P_T)\cong \mathrm{EXT}(\hat{A}^S,\hat{A}^T)
\end{equation}
for any $S,T\in T\in \mathrm{Std}^{(N^{\ell})}_{\vec{k}}$. Let 
$$P_{\Lambda}:=\bigoplus_{\vec{k}\in\Lambda(m,m)_N}
\bigoplus_{T\in\mathrm{Std}^{(N^{\ell})}_{\vec{k}}} P_T\{d(\vec{k})\}.$$
Then~\eqref{eq:firstiso} implies 
$$\mathrm{END}(P_{\Lambda})\cong H_{\Lambda}^{\mathrm{opp}},$$
so $P_{\Lambda}$ is a $R_{\Lambda}-H_{\Lambda}$ bimodule. 

By~\eqref{eq:LT2}, Theorem~\ref{thm:cyclKLR} and~\eqref{eq:LT2cat}, 
we see that $P_{\Lambda}$ is a projective generator of $\mathcal{V}_{\Lambda}$. 

By one of Morita's main results on Morita equivalence (see Theorem 5.55 
in~\cite{rot}, for example), the above observations show that the 
exact functor $\mathcal{W}_{\Lambda}\to \mathcal{V}_{\Lambda}$ defined by 
\begin{equation}
\label{eq:Morita}
M\mapsto P_{\Lambda}\otimes_{H_{\Lambda}} M
\end{equation}
is an equivalence. It also maps projective modules to projective modules, so 
it restricts to an equivalence $\mathcal{W}_{\Lambda}^p\to \mathcal{V}_{\Lambda}^p$. 
Note that under the equivalence~\eqref{eq:Morita}, we have 
$$P^T\mapsto P_T$$
for all $T\in \mathrm{Std}^{(N^{\ell})}$. 
\vskip0.5cm

Up to natural isomorphism, there is now a unique strong 
$\mathfrak{sl}_m$ $2$-representation on $\mathcal{W}_{\Lambda}$ such 
that the equivalence in~\eqref{eq:Morita} becomes an intertwiner and 
the square in~\eqref{eq:Moritasquare} commutes. We can describe 
this action concretely as follows. 

For any signed sequence $\ui=(\epsilon_1 i_1,\ldots,\epsilon_l 
i_l)$ of simple $\mathfrak{sl}_m$ roots and any $\vec{k}\in P_{\Lambda}$, 
we define the $H(\vec{k}',N)-H(\vec{k},N)$ bimodule 
$$ 
H_{\ui}(\vec{k},N):=\bigoplus_{S\in \mathrm{Std}^{(N^{\ell})}_{\vec{k}'}, T\in 
\mathrm{Std}^{(N^{\ell})}_{\vec{k}}}
\mathrm{EXT}(\hat{A}^S,\widehat{E}_{\ui,[\vec{k}]}\btime{R^{\vec{k}}}\hat{A}^T)
\{d(\vec{k})-d(\vec{k}')\},
$$
where $\vec{k}'=\vec{k}+\sum_{j=1}^l \epsilon_{j}\alpha_{i_j}$. The grading 
shift $d(\vec{k})-d(\vec{k}')$ matches precisely 
Brundan and Kleshchev's grading shift in Lemma 4.4 in~\cite{bk}. To see 
this, take for example $\ui=(-i)$. Then 
$\vec{k}'=\vec{k}-\alpha_i$, so  
$$d(\vec{k})-d(\vec{k}')=1-(k_i-k_{i+1}).$$
This is exactly the shift corresponding to the functor 
$K_i^{-1}\langle 1\rangle$ on $\mathcal{V}_{\Lambda}$ 
in Lemma 4.4 in~\cite{bk}. 

From the equivalence in~\eqref{eq:Morita} it follows that 
$H_{\ui}(\vec{k},N)$ is {\em sweet}, meaning that it is projective as a left 
module and as a right module (but not as a bimodule!). Therefore, tensoring 
with $H_{\ui}(\vec{k},N)$ defines an exact endofunctor on $\mathcal{W}_{\Lambda}$ 
which can be restricted to $\mathcal{W}_{\Lambda}^p$. 

Furthermore, we have 
\begin{gather*}
H_{\ui}(\vec{k},N)\cong\\
H_{\epsilon_1 i_1}(\vec{k}+\sum_j^{l-1}\epsilon_j\alpha_j,N)\otimes_{H(\vec{k}+\sum_j^{l-1}\epsilon_j\alpha_j,N)}\cdots\otimes_{H(\vec{k}+\epsilon_l\alpha_l,N)} H_{\epsilon_l i_l}(\vec{k},N).
\end{gather*}

Note that by Theorem 9.2 in~\cite{my} any $2$-morphism in $\u(\mathfrak{sl}_m)$ gives rise 
to a homomorphism of matrix factorizations  
$$\oE_{\ui,[\vec{k}]}\to \oE_{\ui',[\vec{k}]}$$ 
which induces a bimodule map 
$$H_{\ui}(\vec{k},N)\to H_{\ui'}(\vec{k},N).$$
This relation between $2$-morphisms and bimodule maps is compatible with 
compositions, tensor products and units.

It now follows that the square of isomorphisms 
in~\eqref{eq:Moritasquare} commutes. Moreover, all arrows are isometric 
$\Uv{m}$-intertwiners. 
\vskip0.5cm
We can also define a duality on $\mathcal{W}_{\Lambda}$ such that 
the maps in the square~\eqref{eq:Moritasquare} intertwine 
the various bar-involutions, as I now explain. 

By the duality $\circledast$ on $\mathcal{V}_{\Lambda}$ and~\eqref{eq:firstiso} 
we get a graded anti-automorphism 
$$*\colon H_{\Lambda}\to H_{\Lambda}.$$
Just as Brundan and Kleshchev did for 
$\mathcal{V}_{\Lambda}$ (see the text above Theorem~\ref{thm:cyclKLR}), 
we can therefore define a duality 
$\circledast\colon \mathcal{W}_{\Lambda}\to \mathcal{W}_{\Lambda}$.

By definition the equivalence in~\eqref{eq:Morita} intertwines the 
dualities on $\mathcal{W}_{\Lambda}$ and $\mathcal{V}_{\Lambda}$. In particular, 
we see that $(P^T)^{\circledast}\cong P^T$. By 
Proposition~\ref{prop:webpsiinvariant} it follows that 
$\delta'\colon W_{\Lambda}\to K_0^v(\mathcal{W}_{\Lambda}^p)$ 
intertwines $\psi$ and $\circledast$.  
 
\end{proof}

\begin{remark}
Let $S,T\in \mathrm{Std}^{(N^{\ell})}$. If we had a definition of 
$\mathfrak{sl}_N$-foams like we have for $N=2$~\cite{bn}, 
$N=3$~\cite{kv} and in a limited case for $N\geq 4$~\cite{msv2}, 
and if we had an isomorphism between 
the space of $\mathfrak{sl}_N$-foams from $A^S$ to $A^T$ and the elements of 
$\mathrm{EXT}(\hat{A}^S,\hat{A}^T)$ as in~\cite{kr,msv2,mv2}, we could 
describe $*\colon H_{\Lambda}\to H_{\Lambda}$ more directly. 
As in~\cite{mpt}, it would be given by the symmetry on foams given 
by reflection in a plane paralel to the source and target webs 
together with orientation reversal. 
\end{remark}
\subsection{Two consequences}
For any $T\in \mathrm{Std}^{(N^{\ell})}$, let $Q^T\in \mathcal{W}_{\Lambda}^p$ 
correspond to the indecomposable $\circledast$-invariant 
projective module $Q_T\in \mathcal{V}_{\Lambda}^p$ (Theorem~\ref{thm:cyclKLR}) 
under the equivalence in Theorem~\ref{thm:categorification2}. Note that 
$Q^T$ is also indecomposable and $\circledast$-invariant (i.e. up to 
isomorphism, of course), because the equivalence intertwines the duality.  

Under the same equivalence $P^T$ corresponds to $P_T$. 
By~\eqref{eq:LT2cat} and the commutativity of the 
square in~\eqref{eq:Moritasquare}, we get 
$$[Q^T]=[P^T]+\sum_{S\prec T}\beta_{S T}(v)[P^S]$$
and the following corollary. 
\begin{corollary}
\label{cor:dualcanonical}
Under the isomorphism $\delta'\colon W_{\Lambda}\to 
K_0^v(\mathcal{W}_{\Lambda}^p)$ we have 
$$b^T\mapsto [Q^T],$$ 
for any $T\in \mathrm{Std}^{(N^{\ell})}$. 
\end{corollary}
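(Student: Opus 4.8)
The plan is to combine three facts that are already available in the excerpt: the triangular expansion of the LT-web basis on the dual canonical basis (Proposition~\ref{prop:dualcanLT}), the analogous triangular expansion of the projective classes $[Q_T]$ on the $[P_T]$ in $K_0^v(\mathcal{V}_{\Lambda}^p)$ from Brundan--Kleshchev (equation~\eqref{eq:LT2cat}), and the commutativity of the square in~\eqref{eq:Moritasquare} together with the identification $P^T\mapsto P_T$ under the Morita equivalence established in the proof of Theorem~\ref{thm:categorification2}. The point is that $\delta'$ is defined on the LT-web basis by $\delta'(A^T)=[P^T]$, so everything reduces to pushing the identity $b^T=A^T+\sum_{S\prec T}\beta_{ST}(v)A^S$ through $\delta'$ and matching it against the defining relation for $[Q^T]$.

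First I would recall that by Proposition~\ref{prop:dualcanLT} we have, for any $T\in\mathrm{Std}^{(N^{\ell})}$,
$$b^T=A^T+\sum_{S\prec T}\beta_{ST}(v)A^S$$
in $W_{\Lambda}$ (with $S$ ranging over $\mathrm{Std}^{(N^{\ell})}$, and the $\beta_{ST}(v)$ the same bar-invariant coefficients as in~\eqref{eq:LT2} and~\eqref{eq:LT2cat}). Applying the $\C(v)$-linear map $\delta'$ and using $\delta'(A^T)=[P^T]$ from Definition~\ref{def:gammamap} together with $\Z[v,v^{-1}]$-linearity over the $\C(v)$-structure on $K_0^v$, I obtain
$$\delta'(b^T)=[P^T]+\sum_{S\prec T}\beta_{ST}(v)[P^S].$$
Second, I would invoke the equivalence in Theorem~\ref{thm:categorification2}: under it $P^T$ corresponds to $P_T$ and $Q^T$ is by definition the object corresponding to $Q_T$, and the equivalence induces the vertical isomorphism in the square~\eqref{eq:Moritasquare} on Grothendieck groups. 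Transporting~\eqref{eq:LT2cat} back along this isomorphism gives exactly
$$[Q^T]=[P^T]+\sum_{S\prec T}\beta_{ST}(v)[P^S]$$
in $K_0^v(\mathcal{W}_{\Lambda}^p)$, as already noted in the text preceding the corollary. Comparing the two displayed expansions yields $\delta'(b^T)=[Q^T]$, which is the claim.

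I do not expect any genuine obstacle here: the corollary is essentially a bookkeeping consequence of results proved earlier, the only subtlety being to make sure the $\beta_{ST}(v)$ appearing in Proposition~\ref{prop:dualcanLT} and in~\eqref{eq:LT2cat} are literally the same coefficients (they are, since both ultimately come from Leclerc--Toffin's~\eqref{eq:LT2} via the compatible triangular systems, as spelled out in the discussion around Proposition~\ref{prop:triangwebtensor}). One should also remark that $\delta'$ is indeed $\C(v)$-linear, so that it commutes with multiplication by the Laurent polynomials $\beta_{ST}(v)$; this is immediate from Definition~\ref{def:gammamap} and Corollary~\ref{cor:categorification13}. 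Hence the proof is just: expand $b^T$ in the LT-web basis, apply $\delta'$, and recognize the result as the transported Brundan--Kleshchev expansion of $[Q^T]$.

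\begin{proof}
By Proposition~\ref{prop:dualcanLT} we have
$$b^T=A^T+\sum_{S\prec T}\beta_{ST}(v)A^S$$
in $W_{\Lambda}$, with $S\in\mathrm{Std}^{(N^{\ell})}$ and the $\beta_{ST}(v)\in\Z[v,v^{-1}]$ the coefficients of~\eqref{eq:LT2}. The map $\delta'$ is $\C(v)$-linear and, by Definition~\ref{def:gammamap} and Corollary~\ref{cor:categorification13}, satisfies $\delta'(A^T)=[P^T]$. Hence
$$\delta'(b^T)=[P^T]+\sum_{S\prec T}\beta_{ST}(v)[P^S]$$
in $K_0^v(\mathcal{W}_{\Lambda}^p)$.

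On the other hand, under the equivalence of Theorem~\ref{thm:categorification2} we have $P^T\mapsto P_T$ and, by definition, $Q^T$ corresponds to the Brundan--Kleshchev indecomposable $Q_T$ of Theorem~\ref{thm:cyclKLR}. The induced isomorphism on Grothendieck groups fits into the commuting square~\eqref{eq:Moritasquare}, and transporting the identity~\eqref{eq:LT2cat},
$$[Q_T]=[P_T]+\sum_{S\prec T}\beta_{ST}(v)[P_S],$$
back along it yields
$$[Q^T]=[P^T]+\sum_{S\prec T}\beta_{ST}(v)[P^S]$$
in $K_0^v(\mathcal{W}_{\Lambda}^p)$. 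Comparing the two displayed expressions gives $\delta'(b^T)=[Q^T]$, as claimed.
\end{proof}
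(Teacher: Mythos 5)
Your proof is correct and takes essentially the same route as the paper's: the paper's (terse) argument is exactly to transport~\eqref{eq:LT2cat} through the Morita equivalence via the commuting square~\eqref{eq:Moritasquare} to obtain $[Q^T]=[P^T]+\sum_{S\prec T}\beta_{ST}(v)[P^S]$, and then to compare with $\delta'(b^T)$ computed from Proposition~\ref{prop:dualcanLT} together with $\delta'(A^T)=[P^T]$. You simply spell out the bookkeeping more explicitly than the paper does.
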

\vskip0.5cm
Analogously to the case for $N=2,3$, Theorem~\ref{thm:categorification2} 
also implies that we can determine the center of $H(\vec{k},N)$. By the 
work of Brundan, Kleshchev and Ostrik~\cite{bru2,bk2,bo} it is known that 
the center of $R_{\Lambda}(k_1-k_2,\ldots,k_{m-1}-k_m)$ is isomorphic to 
$$H^*(X^{\Lambda}_{\vec{k}}).$$
As before, $X^{\Lambda}_{\vec{k}}$ is the Spaltenstein variety of partial flags in 
$\C^{m}$ of type $\vec{k}$ and nilpotent linear operator of Jordan type 
$(\ell^N)$. Since Morita equivalent algebras have isomorphic centers, we get:
\begin{corollary}
\label{cor:center}
For any $\vec{k}\in\Lambda(m,m)_N$, there exists an isomorphism of 
graded complex algebras  
$$Z(H(\vec{k},N))\cong H^*(X^{\Lambda}_{\vec{k}}).$$
\end{corollary}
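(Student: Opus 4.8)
The plan is to deduce Corollary~\ref{cor:center} by transporting the known description of $Z(R_{\Lambda}(\mu))$ across the Morita equivalence established in Theorem~\ref{thm:categorification2}. First I would recall the elementary ring-theoretic fact that Morita equivalent (graded) algebras have isomorphic (graded) centers: indeed, if $A$ and $B$ are Morita equivalent with $A\text{-}\mathrm{mod}_{\mathrm{gr}}\simeq B\text{-}\mathrm{mod}_{\mathrm{gr}}$, then this equivalence induces a graded algebra isomorphism between the endomorphism ring of the identity functor on either side, and $Z(A)\cong\mathrm{End}(\mathrm{Id}_{A\text{-}\mathrm{mod}_{\mathrm{gr}}})\cong\mathrm{End}(\mathrm{Id}_{B\text{-}\mathrm{mod}_{\mathrm{gr}}})\cong Z(B)$; the grading is respected because the equivalence of Theorem~\ref{thm:categorification2} is an equivalence of $\Z$-graded categories (it is given by tensoring with the graded bimodule $P_{\Lambda}$).

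Next I would match up the blocks. Both $R_{\Lambda}$ and $H_{\Lambda}$ decompose into blocks indexed by $\Lambda(m,m)_N$: on the KLR side via $R_{\Lambda}=\bigoplus_{\mu\in P_{\Lambda}}R_{\Lambda}(\mu)$, where $\phi_{m,m,N}\colon P_{\Lambda}\to\Lambda(m,m)_N$ is the bijection recorded just before Definition~\ref{def:webmodule}; on the web side via $H_{\Lambda}=\bigoplus_{\vec{k}\in\Lambda(m,m)_N}H(\vec{k},N)$. The equivalence $\mathcal{W}_{\Lambda}\to\mathcal{V}_{\Lambda}$ of Theorem~\ref{thm:categorification2} sends $P^T\mapsto P_T$ for $T\in\mathrm{Std}^{(N^{\ell})}$, and since $A^T$ (hence $P^T$, hence $P_T$) lies in the block $\vec{k}$ precisely when $T\in\mathrm{Std}^{(N^{\ell})}_{\vec{k}}$ and correspondingly $P_T\in\mathcal{V}_{\Lambda}(\mu)$ for $\mu=\phi_{m,m,N}^{-1}(\vec{k})$, the equivalence is block-respecting. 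Therefore it restricts, for each $\vec{k}\in\Lambda(m,m)_N$, to a graded Morita equivalence between $H(\vec{k},N)$ and $R_{\Lambda}(\mu)$ with $\mu=(k_1-k_2,\ldots,k_{m-1}-k_m)$, and hence to a graded algebra isomorphism $Z(H(\vec{k},N))\cong Z(R_{\Lambda}(k_1-k_2,\ldots,k_{m-1}-k_m))$.

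Finally I would invoke the input from geometry: by the work of Brundan, Kleshchev and Ostrik~\cite{bru2,bk2,bo}, there is an isomorphism of graded complex algebras
$$Z(R_{\Lambda}(k_1-k_2,\ldots,k_{m-1}-k_m))\cong H^*(X^{\Lambda}_{\vec{k}}),$$
where $X^{\Lambda}_{\vec{k}}$ is the Spaltenstein variety of partial flags in $\C^m$ of type $\vec{k}$ with nilpotent operator of Jordan type $(\ell^N)$ (this is exactly the statement already quoted in the text preceding the corollary). Composing the two isomorphisms yields
$$Z(H(\vec{k},N))\cong H^*(X^{\Lambda}_{\vec{k}})$$
as graded complex algebras, as desired.

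The only genuinely delicate point is the bookkeeping of the grading shifts: the equivalence in Theorem~\ref{thm:categorification2} sends $P^T\mapsto P_T$ on the nose (the block-wise shift $\{d(\vec{k})\}$ has been built into $P_{\Lambda}$), so it is a shift-strict equivalence of graded categories and the induced center isomorphism is genuinely graded; one should just make sure the degree convention $\deg(x_i)=2$ on $H^*(X^{\Lambda}_{\vec{k}})$ is the one used in~\cite{bru2,bk2,bo}, which it is. Beyond this, the argument is a formal consequence of the main theorem and the cited results, so there is no substantial obstacle.
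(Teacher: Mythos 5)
Your proposal is correct and follows the same route as the paper: pass the Brundan--Kleshchev--Ostrik description of $Z(R_{\Lambda}(\mu))$ across the Morita equivalence of Theorem~\ref{thm:categorification2}, using that Morita equivalent graded algebras have isomorphic graded centers. The extra care you take with block-matching and with the grading shifts fills in details that the paper leaves implicit but does not change the argument.
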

\noindent I could give an explicit degree preserving isomorphism, as was 
done in~\cite{kh2} for $N=2$ and in~\cite{mpt} for $N=3$, but I do not use 
it here and will therefore omit it.

%
%
%
%

\vskip0.3cm
\noindent Marco Mackaay: {\sl \small CAMGSD, Instituto Superior T\'{e}cnico, 
Lisboa, Portugal; Universidade do Algarve, Faro, Portugal} 
\newline \noindent {\tt \small email: mmackaay@ualg.pt}
\end{document}